\definecolor{refkey}{rgb}{0.8,0.8,0.8}
\definecolor{labelkey}{rgb}{0.9,0,0.1}
\crefname{ineq}{Ineq.}{inequalities}
\newtheorem*{rep@theorem}{\rep@title}
\newcommand{\newreptheorem}[2]{%
	\newenvironment{rep#1}[1]{%
		\def\rep@title{#2 \ref{##1}}%
		\begin{rep@theorem}}%
		{\end{rep@theorem}}}
\newtheorem*{main*}{Main Theorem}
\newtheorem{theorem}{Theorem}[section]
\newtheorem*{theorem*}{Theorem}
\newtheorem{proposition}[theorem]{Proposition}
\newtheorem{lemma}[theorem]{Lemma}
\newtheorem{lem}[theorem]{Lemma}
\newtheorem{corollary}[theorem]{Corollary}
\newtheorem*{corollary*}{Corollary}
\newtheorem*{cor*}{Corollary}
\newtheorem*{question*}{Question}
\newtheorem*{conjecture*}{Conjecture}
\theoremstyle{definition}
\newtheorem{definition}[theorem]{Definition}
\newtheorem*{definition*}{Definition}
\newtheorem{example}[theorem]{Example}
\theoremstyle{remark}
\newtheorem{remark}[theorem]{Remark}
\numberwithin{equation}{section}
\newcommand{\C}{\mathbb{C}}
\newcommand{\R}{\mathbb{R}}
\newcommand{\Z}{\mathbb{Z}}
\newcommand{\V}{\mathbb{V}}
\newcommand{\N}{\mathbb{N}}
\newcommand{\mc}{\mathcal}
\newcommand{\bb}{\mathbb}
\newcommand{\mf}{\mathfrak}
\newcommand{\Ga}{\Gamma}
\newcommand{\ga}{\gamma}
\newcommand{\eps}{\epsilon }
\DeclareMathOperator{\cd}{cd}
\DeclareMathOperator{\tr}{tr}
\DeclareMathOperator{\supp}{supp}
\DeclareMathOperator{\vol}{Vol}
\DeclareMathOperator{\Jac}{Jac}
\DeclareMathOperator*{\bary}{bar}
\DeclareMathOperator{\Sp}{\Sp}
\newcommand{\op}[1]{\operatorname{#1}}
\newcommand{\set}[1]{\left\{#1 \right\}}
\newcommand{\til}{\widetilde}
\newcommand{\floor}[1]{\left\lfloor #1 \right\rfloor}
\newcommand\rest[1]{\raisebox{-.5ex}{$|$}_{#1}}
\newcommand{\pa}{\partial }
\newcommand{\of}{\circ }
\providecommand{\to}{\longrightarrow }
\newcommand{\abs}[1]{\left\lvert #1 \right\rvert }
\newcommand{\norm}[1]{\left\| #1 \right\| }
\newcommand{\inner}[1]{\left\langle #1 \right\rangle }
\DeclareMathOperator{\CAT}{\mathsf{CAT}}
		\renewcommand{\bar}{\overline}
\newcommand{\cout}[1]{}
\definecolor{darkcyan}{rgb}{0., 0.65, 0.65}
\begin{document}


\title{The natural flow and the critical exponent}
\author{Chris Connell, D. B. McReynolds, Shi Wang}

\begin{abstract}
Inspired by work of Besson--Courtois--Gallot, we construct a flow called the \emph{natural flow} on a non-positively curved Riemannian manifold $M$. As with the natural map, the $k$--Jacobian of the natural flow is directly related to the critical exponent $\delta$ of the fundamental group. There are several applications of the natural flow that connect dynamical, geometrical, and topological invariants of the manifold.  First, we give $k$--dimensional linear isoperimetric inequalities when $k > \delta$. This, in turn, produces lower bounds on the Cheeger constant. We resolve a recent conjecture of Dey--Kapovich on the non-existence of $k$--dimensional compact, complex subvarieties of complex hyperbolic manifolds with $2k > \delta$. We also provide upper bounds on the homological dimension, generalizing work of Kapovich and work of Farb with the first two authors. Using the natural flow together with Morse theory, we also give upper bounds on the cohomological dimension, which partially resolve a conjecture of Kapovich. Finally, we introduce a new growth condition on the Bowen--Margulis measure that we call \emph{uniformly exponentially bounded} that we connect to the cohomological dimension and which could be of independent interest. 
\end{abstract}


\maketitle


\section{Introduction}

In this paper, we will explore connections between different invariants of groups and manifolds arising from a Hadamard space. Recall that a \textbf{Hadamard space} is a connected, simply connected, complete Riemannian manifold $(X,d)$ with non-positive sectional curvatures. Given a finitely generated, discrete, torsion-free subgroup $\Gamma < \mathrm{Isom}(X)$, we have the associated Riemannian manifold $M= X/\Gamma$. Given a point $p \in X$, the \textbf{critical exponent of $\Gamma$} (\textbf{or $M$}) is defined to be
\[\delta(M) = \delta(\Ga):=\inf\{s\,:\,\sum_{\ga\in \Ga}e^{-sd({p},\ga {p})}<\infty\}.\]

The main theme of this paper is that the critical exponent can be used to control a wide array of different geometrical and topological properties of $M$. For instance, we show that if $k > \delta$, then $M$ satisfies a $k$--dimensional linear isoperimetric inequality. In the special case of codimension one, this provides bounds on the Cheeger constant. We provide upper bounds on the homological dimension of $M$ via the critical exponent which generalizes work of Kapovich and work of Farb with the first two authors of this paper. Using different methods, we provide upper bounds for the cohomological dimension which partially resolves a conjecture of Kapovich. We also resolve a recent conjecture of Dey--Kapovich on the non-existence of compact complex subvarieties of complex hyperbolic manifolds whose dimension is controlled by the critical exponent. Each of these applications requires different techniques and methods in order to establish it. We employ a number of tools from areas including geometric measure theory, algebraic topology, Lie theory, dynamics, and complex geometry. The main tool used to prove these results is the natural flow.  

\subsection{The Natural Flow} 

Our flow is inspired by the pioneering work of Besson--Courtois--Gallot \cite{BCG95, BCG99}. Although Besson--Courtois--Gallot treat a much more general setting of maps $f\colon M \to N$, we consider the simplest case where we have the identity map $\mathrm{Id}\colon M \to M$ and $M$ is non-positively curved. The \textbf{natural map} or \textbf{barycenter map} $F\colon M \to M$, which is homotopic to $\mathrm{Id}$, is defined by $F(x) = \mathrm{bar}(\mu_x)$ where $\mu_x$ is the Patterson--Sullivan measure and $\mathrm{bar}(\star)$ is a certain choice of barycenter of the measure $\star$. The approach we take here is a continuous version of this in the form of a flow that we call the \textbf{natural flow} or \textbf{barycenter flow}. The idea is quite simple. Given any $\delta$--conformal density $\{\mu_x:x\in X\}$ on $X$ associated to $\Ga$, we take the gradient flow of the potential function $\bar{f}\colon X \to \R$ given by
\begin{equation}\label{eq:potential}
\bar f(x)=-\log ||\mu_x||.
\end{equation}
The function $\bar{f}$ is $\Ga$--invariant and therefore descends to a function $f\colon M\to \R$. The gradient field $\nabla f=\mathfrak X$ integrates to a $C^1$--flow $\phi_t$ on $M$ which will be complete. The natural flow gives a family of diffeomorphisms which reflect properties of the natural map, which in general is not a diffeomorphism. One specific connection is that the fixed points of the natural map and the natural flow coincide. Through the results of this paper, we hope to convince the reader of the broad utility of this flow. We expect that there will be additional applications beyond those given here in areas like higher Teichm\"{u}ller theory, the study of thin groups, or more generally the study of Zariski dense subgroups of semisimple Lie groups. We will provide further applications along with some problems motivated by this present work in \cite{CMW26}. 

An important feature of the natural map is the existence of upper bounds on the $k$--Jacobian that depend on $X$ and the critical exponent of $\Gamma$. The bounds for the natural map were established first by Besson--Courtois--Gallot \cite{BCG99} in the real rank one setting and sharpened by Farb with the first two authors of this paper \cite{CFM19}. The estimates were rather technical to establish.

The starting point is a continuous version of the $k$--Jacobian bounds. One advantage of the  continuous version is that the $k$--Jacobian estimate is converted into a $k$--trace estimate. Consequently, these bounds are better, more general, and substantially easier to establish. We view this as one of the \emph{especially nice} features of the natural flow. In fact, all this comes down to the following general observation which is presumably known to experts. In the statement, the $k$--trace $\mathrm{tr}_k(\star)$ of a bilinear form $\star$ is the sum of the $k$ smallest eigenvalues of $\star$ (see Definition \ref{def:k-trace}), with respect to the Riemannian inner product.

\begin{theorem}[Uniform Exponential Contraction] \label{prop:phi_contracts}
Let $M$ be a Riemannian manifold, let $f\colon M \to \R$ be a $C^2$ function whose gradient vector field $\nabla f = \mathfrak{X}$ is complete, and let $\phi_t\colon M \to M$ be the associated one-parameter group of self-diffeomorphisms. If $k >0$ and $\epsilon_k \in \R$ satisfy $\op{tr}_k(\nabla df)(x)\geq\epsilon_k$ for all $x\in M$, then
\[\op{Jac}_k \phi_{-t}(x)\leq e^{-\epsilon_k t},\]
for all $t>0$, $x\in M$. In particular, if $\epsilon_k>0$ then $\phi_{t}$ uniformly exponentially expands the $k$--volume and $\phi_{-t}$ uniformly exponentially contracts the $k$--volume as $t$ increases.	
\end{theorem}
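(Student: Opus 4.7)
The plan is to reduce the $k$-Jacobian bound to an evolution equation for the Gram matrix of a transported orthonormal $k$-frame, and to identify the resulting logarithmic derivative with an intrinsic trace of the Hessian on the transported $k$-plane.

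First, fix $x\in M$ and a $k$-dimensional subspace $P\subset T_xM$ with orthonormal basis $v_1,\dots,v_k$, and set $V_i(t)=d\phi_t|_x(v_i)\in T_{\phi_t(x)}M$. Since $\phi_t$ is the flow of $\mathfrak X=\nabla f$, each $V_i$ satisfies the linearized equation $\tfrac{D}{dt}V_i=\nabla_{V_i}\nabla f$ along the orbit $t\mapsto\phi_t(x)$. Writing $G(t)_{ij}=\langle V_i(t),V_j(t)\rangle$ for the Gram matrix, the definition of the $k$-Jacobian gives $\op{Jac}_k\phi_t(x)|_P=\sqrt{\det G(t)}$, so the claim reduces to a uniform lower bound on $\det G(t)$.

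Next, using the symmetry of the Hessian $H:=\nabla df$ as a $(0,2)$-tensor, we compute $\dot G_{ij}(t)=2H(V_i(t),V_j(t))$, whence
\[
\frac{d}{dt}\log\det G(t)=\op{tr}\bigl(G(t)^{-1}\dot G(t)\bigr)=2\,\op{tr}\bigl(G(t)^{-1}\tilde H(t)\bigr),
\]
where $\tilde H(t)_{ij}=H(V_i(t),V_j(t))$ is the matrix of $H|_{W_t}$ on $W_t:=d\phi_t(P)$ in the (non-orthonormal) basis $\{V_i(t)\}$. A short change-of-basis argument identifies $\op{tr}(G^{-1}\tilde H)$ with the intrinsic trace of $H|_{W_t}$ in the Riemannian metric restricted to $W_t$, which is precisely the quantity entering the definition of the $k$-trace.

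Finally, by the definition of $\op{tr}_k$ as the infimum of these intrinsic traces over $k$-planes, the hypothesis gives $\op{tr}(H|_{W_t})\geq \op{tr}_k(\nabla df)(\phi_t(x))\geq \epsilon_k$ for every $t$, so $\tfrac{d}{dt}\log\op{Jac}_k\phi_t(x)|_P\geq \epsilon_k$ and hence $\op{Jac}_k\phi_t(x)|_P\geq e^{\epsilon_k t}$ for $t\geq 0$. Since every $k$-plane at $\phi_t(x)$ arises as $d\phi_t(P)$ for some $P$ at $x$, inverting the bound along the orbit yields $\op{Jac}_k\phi_{-t}(y)\leq e^{-\epsilon_k t}$ at every $y\in M$. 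The only non-routine step we anticipate is the change-of-basis identification in the third paragraph; once that is in place, the remainder is a standard first-variation computation followed by integration of the resulting differential inequality.
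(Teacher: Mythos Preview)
Your proof is correct and follows essentially the same route as the paper: both compute the logarithmic derivative of the $k$-volume of a transported $k$-frame along the flow, identify it with the trace of $\nabla df$ restricted to the moving $k$-plane, bound that below by $\op{tr}_k(\nabla df)\ge\eps_k$, and integrate (the paper phrases this via $\norm{Y_1(t)\wedge\cdots\wedge Y_k(t)}$ and Gr\"onwall, you via $\det G(t)$, which is the same quantity squared). The step you flagged as possibly non-routine---that $\op{tr}(G^{-1}\tilde H)$ is the intrinsic trace of $H|_{W_t}$---is indeed just a change of basis, since for $A$ the invertible matrix sending an orthonormal basis of $W_t$ to $\{V_i\}$ one has $G=AA^T$, $\tilde H=AH_0A^T$, and hence $\op{tr}(G^{-1}\tilde H)=\op{tr}(H_0)$.
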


Applying Theorem \ref{prop:phi_contracts} to the setting of pinched negative curvature and where $f$ is given by (\ref{eq:potential}), we can relate $\epsilon_k$ to the critical exponent. 

\begin{corollary}[Uniform Exponential Contraction: Negative Curvature]\label{T:MainJac} 
If $X$ is an $n$--dimensional, simply connected, pinched negatively curved manifold whose sectional curvatures are bounded above by $-1$, and $\Ga < \mathrm{Isom}(X)$ is a discrete, torsion-free subgroup with critical exponent $\delta$, then there exists a one-parameter family of self-diffeomorphisms $F_t$ ($=\phi_{-t}$) on the quotient manifold $M=X/\Ga$, which satisfies
\[\op{Jac}_k F_t(x)\leq e^{-\delta(k-1-\delta)t},\]
for any $x\in M$, $t>0$ and integer $k\in [1,n]$.
\end{corollary}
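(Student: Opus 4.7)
The plan is to apply Theorem \ref{prop:phi_contracts} to the descent $f\colon M\to\R$ of the $\Ga$-invariant potential $\bar f(x)=-\log\|\mu_x\|$ from (\ref{eq:potential}), built from a $\delta$-conformal Patterson--Sullivan density $\{\mu_x\}$, and then set $F_t:=\phi_{-t}$. This reduces the corollary to the pointwise estimate $\op{tr}_k(\nabla df)\geq\delta(k-1-\delta)$, together with completeness of $\nabla f$, which will follow from the uniform bound $|\nabla\bar f|\leq\delta$ visible in the first-order computation below.

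Using the conformal transformation law $d\mu_x/d\mu_{x_0}=e^{-\delta B_\xi(x,x_0)}$, where $B_\xi$ is the Busemann function normalized by $B_\xi(x_0,x_0)=0$, one writes $\|\mu_x\|=\int_{\partial X}e^{-\delta B_\xi(x,x_0)}\,d\mu_{x_0}(\xi)$. Differentiating $\bar f=-\log\|\mu_x\|$ twice (justified by dominated convergence, using $|\nabla B_\xi|\equiv 1$ and the uniform bound on $\nabla dB_\xi$ available in pinched negative curvature) yields the BCG-type identity
\[\nabla d\bar f(v,v)=\delta\int_{\partial X}\nabla dB_\xi(v,v)\,d\nu_x(\xi)\;-\;\delta^2\,\op{Var}_{\nu_x}\!\big(dB_\xi(v)\big),\]
with $d\nu_x:=\|\mu_x\|^{-1}e^{-\delta B_\xi(x,x_0)}\,d\mu_{x_0}$ a probability measure on $\partial X$. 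The sectional curvature hypothesis $K\leq -1$ and Riccati/Hessian comparison give $\nabla dB_\xi\geq g-dB_\xi\otimes dB_\xi$ pointwise on $X$. Averaging this inequality and discarding the nonnegative square-of-mean contribution to the variance gives
\[\nabla d\bar f\;\geq\;\delta\,g\;-\;\delta(1+\delta)\,H,\qquad H:=\int_{\partial X}dB_\xi\otimes dB_\xi\,d\nu_x,\]
a symmetric bilinear form on $T_xX$.

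The form $H$ is positive semidefinite with $\op{tr}(H)=\int_{\partial X}|dB_\xi|^2\,d\nu_x=1$, so the sum of its $k$ largest eigenvalues is at most $1$. Translating this into the sum of the $k$ smallest eigenvalues of the right-hand side gives $\op{tr}_k(\delta g-\delta(1+\delta)H)\geq k\delta-\delta(1+\delta)=\delta(k-1-\delta)$, and monotonicity of $\op{tr}_k$ under the bilinear-form order transports the bound to $\op{tr}_k(\nabla d\bar f)\geq\delta(k-1-\delta)$. Since $\bar f$ is $\Ga$-invariant, the same bound passes to $f$ on $M$; applying Theorem \ref{prop:phi_contracts} with $\epsilon_k=\delta(k-1-\delta)$ and $F_t:=\phi_{-t}$ gives the claimed Jacobian inequality. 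The two delicate steps are the differentiations under the integral over $\partial X$ and the Busemann Hessian comparison, but both are standard tools in pinched negative curvature and should require only routine justification.
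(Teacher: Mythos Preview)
Your proposal is correct and follows essentially the same approach as the paper: both apply Theorem \ref{prop:phi_contracts} to $f=-\log\|\mu_x\|$, use the Hessian identity \eqref{eq:Hessian f}, drop the nonnegative $df\otimes df$ term, and invoke the comparison $\nabla dB_\theta\geq g-dB_\theta\otimes dB_\theta$ from $K\leq -1$ to obtain $\tr_k(\nabla df)\geq\delta(k-1-\delta)$. The only cosmetic difference is the order of operations: the paper takes $\tr_k$ pointwise in $\theta$ (diagonalizing $\nabla dB_\theta-\delta\,dB_\theta\otimes dB_\theta$ as $\mathrm{diag}(-\delta,\kappa_1,\dots,\kappa_{n-1})$ with $\kappa_i\geq 1$) and then averages via concavity of $\tr_k$, whereas you average first to the form $\delta g-\delta(1+\delta)H$ and then take $\tr_k$ via monotonicity and $\tr H=1$; both routes are equivalent here.
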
 

\subsection{Isoperimetric inequalities and complex subvarieties}

The classical Plateau's problem asks: what is the area minimizing surface enclosed by a closed curve $c$ in $\mathbb R^3$? Despite existence/regularity issues of the area minimizer, the minimizing area is always well-defined. Bounding the area of the minimizer as a function of the length of $c$ is the general isoperimetric problem and makes sense in higher dimensions and in arbitrary manifolds (possibly with non-trivial topology). Using the natural map in pinched negatively curved manifolds, Liu and the third author \cite{LiuWang20} proved that for any smooth $1$--boundary, the minimizing area of the surfaces is always linearly bounded by the length of the $1$--boundary, provided the critical exponent is small. Using the natural flow, we can sharpen this bound and extend it to higher dimensions. For simplicity, we only state the theorem for compact immersed submanifolds though the argument works for finite volume smoothly parameterized manifolds; a smooth $k$--boundary is boundary of a compact smoothly immersed $(k+1)$--manifold. 

\begin{theorem}[Isoperimetric Inequality]\label{T:MainIso}
If $X$ is an $n$--dimensional, simply connected, pinched negatively curved manifold whose sectional curvatures are bounded above by $-1$, and $\Ga < \mathrm{Isom}(X)$ is a discrete, torsion-free subgroup with critical exponent $\delta$ such that $\delta<k$ for some positive integer $k\in [1,n-1]$, then for any smooth $k$--boundary $B$ in the quotient manifold $M=X/\Ga$, and any $\epsilon>0$, there exists a smoothly immersed $(k+1)$--manifold $\Omega^\epsilon$, such that $\partial \Omega^\epsilon=B$ and
\[\vol_{k+1}(\Omega^\epsilon)\leq \frac{1}{k-\delta}\vol_k(B)+\epsilon.\]
\end{theorem}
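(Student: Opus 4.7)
The plan is to build $\Omega^\epsilon$ by gluing a ``tube'' swept out by flowing $B$ under the natural flow $F_t$ of Corollary \ref{T:MainJac} to a very small filling of the terminal image. Concretely, since $B$ is a smooth $k$-boundary, fix an arbitrary smoothly immersed $(k+1)$-manifold $\Omega_0$ with $\partial\Omega_0 = B$. For $T>0$ to be chosen large, define the parametrized tube
\[
\Psi\colon B \times [0,T] \longrightarrow M,\qquad \Psi(x,t):=F_t(x),
\]
and set $\Omega^\epsilon := \Psi(B \times [0,T])\cup F_T(\Omega_0)$, glued along the common locus $F_T(B)=\partial F_T(\Omega_0)$. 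Since $F_T$ is a diffeomorphism, this piecewise construction is a smooth $(k+1)$-chain with $\partial\Omega^\epsilon = B$; after a standard collar smoothing along $F_T(B)$ (and, if needed, a generic perturbation of $\Omega_0$ to ensure $\Psi$ is an immersion off a negligible set), it becomes a smoothly immersed $(k+1)$-manifold at arbitrarily small volume cost.

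The key step is a pointwise bound on $\op{Jac}\Psi$ that uses the $(k+1)$-Jacobian estimate rather than the $k$-Jacobian one. Let $\{e_1,\dots,e_k\}$ be an orthonormal basis of $T_xB$. Because $F_t$ is the flow generated by $-\mathfrak X$, the standard flow-invariance identity gives $DF_t(\mathfrak X(x))=\mathfrak X(F_t(x))$, so
\[
\op{Jac}\Psi(x,t)=\bigl|DF_t(e_1)\wedge\cdots\wedge DF_t(e_k)\wedge\mathfrak X(F_t(x))\bigr|=\op{Jac}_{k+1}\!\bigl(F_t|_{V'}\bigr)\cdot\bigl|e_1\wedge\cdots\wedge e_k\wedge\mathfrak X(x)\bigr|,
\]
where $V':=\op{span}(e_1,\ldots,e_k,\mathfrak X(x))\subset T_xM$. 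The wedge factor is bounded by $|\mathfrak X(x)|$, and the Patterson--Sullivan averaging formula $\nabla f(x) = \delta\!\int \nabla b_\xi(x)\,d\tilde\mu_x(\xi)$ together with $|\nabla b_\xi|\equiv 1$ gives $|\mathfrak X|\le\delta$ throughout $M$. Applying Corollary \ref{T:MainJac} in dimension $k+1$ (in place of $k$) we conclude
\[
\op{Jac}\Psi(x,t)\le \delta\, e^{-\delta(k-\delta)t}.
\]

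The remainder is integration. By the area formula and the hypothesis $\delta<k$,
\[
\vol_{k+1}\bigl(\Psi(B\times[0,T])\bigr)\le \vol_k(B)\int_0^T \delta\,e^{-\delta(k-\delta)t}\,dt<\frac{1}{k-\delta}\vol_k(B),
\]
while Corollary \ref{T:MainJac} separately yields $\vol_{k+1}(F_T(\Omega_0))\le e^{-\delta(k-\delta)T}\vol_{k+1}(\Omega_0)\to 0$ as $T\to\infty$. Choosing $T$ large enough that this tail together with the smoothing correction is at most $\epsilon$ gives the bound. The main obstacle is exactly what the $(k+1)$-Jacobian trick sidesteps: the naive tube estimate $\op{Jac}\Psi\le |\mathfrak X|\cdot\op{Jac}_k(F_t|_B)$ produces only the inferior contraction exponent $\delta(k-1-\delta)$, which fails outright when $k\le 1+\delta$; absorbing one dimension into the flow direction through the identity $DF_t(\mathfrak X)=\mathfrak X\circ F_t$ is precisely what upgrades the rate to $\delta(k-\delta)$ and yields the sharp coefficient $1/(k-\delta)$.
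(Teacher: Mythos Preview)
Your argument is correct and takes a genuinely different route from the paper. The paper proceeds indirectly: it introduces the infimum $A_0=\inf_{\Omega}\vol_{k+1}(\Omega)$ over fillings of $B$, picks a near-minimizer $\Omega_0$, flows for a \emph{small} time $t_0$, and compares the resulting filling $I_{t_0}\cup\Omega_{t_0}$ back against $A_0$. In estimating the tube $I_{t_0}$ it uses only the naive product bound $\op{Jac}\Psi\le \delta\cdot \op{Jac}_kF_t$, which yields the weaker exponent $\delta(k-1-\delta)$; this is harmless because the paper then sends $t_0\to 0$ (specifically $t_0=\sqrt{\epsilon}$), so only the first-order Taylor coefficients matter and one extracts $A_0\le \frac{1}{k-\delta}\vol_k(B)$.

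Your approach is a direct construction with $T\to\infty$, and the mechanism that makes this work is precisely the identity $DF_t(\mathfrak X(x))=\mathfrak X(F_t(x))$, which lets you rewrite the tube Jacobian as a genuine $(k+1)$--Jacobian of $F_t$ and hence invoke Corollary~\ref{T:MainJac} with $k$ replaced by $k+1$. This upgrades the contraction rate on the tube to $\delta(k-\delta)$, so the $t$--integral converges and the tail $F_T(\Omega_0)$ vanishes. What your method buys is an explicit filling and a cleaner one-shot estimate; what the paper's method buys is that it never needs the flow-invariance trick and directly controls the infimum $A_0$ (a marginally stronger conclusion). One small point: your displayed equality $\op{Jac}\Psi=\op{Jac}_{k+1}(F_t|_{V'})\cdot|e_1\wedge\cdots\wedge e_k\wedge\mathfrak X(x)|$ is really an inequality once you pass to the supremal $\op{Jac}_{k+1}F_t$ from Corollary~\ref{T:MainJac}, but this is exactly what you use in the next line, so the argument is unaffected.
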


Apply Theorem \ref{T:MainIso} to the special case $k=n-1$, we obtain a lower bound on the Cheeger constant.

\begin{corollary}[Buser Type Inequality]\label{Cor:Cheeger}
If $X$ is an $n$--dimensional, simply connected, pinched negatively curved manifold whose sectional curvatures are bounded above by $-1$, and $\Ga < \mathrm{Isom}(X)$ is a discrete, torsion-free subgroup with critical exponent $\delta$, then the Cheeger constant $h(M)$ of the quotient manifold $M=X/\Ga$ satisfies $h(M)\geq n-1-\delta$.
\end{corollary}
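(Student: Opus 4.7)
The strategy is to apply Theorem \ref{T:MainIso} at the top codimension $k=n-1$ and convert the resulting filling estimate into the Cheeger bound by a topological degree argument. If $\delta \geq n-1$ then $h(M) \geq 0 \geq n-1-\delta$ and there is nothing to prove, so I may assume $\delta < n-1$.

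Let $\Omega \subset M$ be any relatively compact subdomain with smooth boundary $B := \partial\Omega$ (and, in the closed case, with $\vol_n(\Omega) \leq \vol_n(M)/2$). Since $\Omega$ itself is a smooth $n$--manifold realizing $B$ as its boundary, $B$ is a smooth $(n-1)$--boundary in the sense of Theorem \ref{T:MainIso}. Applying that theorem with $k = n-1$ yields, for every $\epsilon > 0$, a smoothly immersed $n$--manifold $\iota \colon \Omega^\epsilon \to M$ with $\partial\Omega^\epsilon = B$ and
\[\vol_n(\Omega^\epsilon) \leq \frac{1}{n-1-\delta}\,\vol_{n-1}(\partial\Omega) + \epsilon.\]

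The heart of the argument is then to show that $\vol_n(\Omega^\epsilon) \geq \vol_n(\Omega)$, so that any such filling actually dominates the volume of $\Omega$. I plan to establish this via a mod--$2$ degree computation: after a small generic perturbation of $\iota$ (which may be taken to change volumes by less than $\epsilon$), the preimage $\iota^{-1}(p)$ is finite for almost every $p \in M \setminus \partial\Omega$, and the parity $\#\iota^{-1}(p) \bmod 2$ is locally constant on $M \setminus \partial\Omega$ and flips each time $p$ crosses $\partial\Omega$ transversely. Because the image of the compact manifold $\Omega^\epsilon$ is bounded, this parity must vanish on any unbounded component of $M \setminus \partial\Omega$ (respectively on the larger side in the closed case), and hence equals $1$ on almost every point of $\Omega$. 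The area formula then gives
\[\vol_n(\Omega^\epsilon) = \int_{\Omega^\epsilon} |\Jac \iota|\, dV \geq \int_M \#\iota^{-1}(p)\, dV(p) \geq \vol_n(\Omega).\]
Combining this with the previous inequality and letting $\epsilon \to 0$ yields $\vol_{n-1}(\partial\Omega)/\vol_n(\Omega) \geq n-1-\delta$, and taking the infimum over admissible $\Omega$ produces $h(M) \geq n-1-\delta$.

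The only real obstacle is this degree/covering step: Theorem \ref{T:MainIso} produces only an immersed filling, not an embedded one, so some care is needed to guarantee that the filling genuinely covers $\Omega$. The mod--$2$ degree argument should settle this provided $\partial\Omega$ is smooth and compact, but one must verify that the transversality perturbation of $\iota$ does not disturb the filling volume bound by more than the already present $\epsilon$, which is standard.
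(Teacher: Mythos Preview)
Your proof is correct and follows essentially the same route as the paper's. The paper also reduces to $\delta<n-1$ (so $M$ is noncompact), applies Theorem~\ref{T:MainIso} at $k=n-1$, and then needs exactly the step you flagged: that the immersed filling dominates $\vol_n(\Omega)$. Where you run an explicit mod--$2$ degree argument, the paper instead observes that $\partial\Omega$ bounds a \emph{unique} compact region in the noncompact $M$ (otherwise gluing two such regions with opposite orientations would produce a closed $n$--submanifold of a connected noncompact $n$--manifold), and uses this to identify the filling volume with $\vol_n(\Omega)$; your degree computation is the rigorous unpacking of that sentence.
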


When $M$ is a real hyperbolic manifold, Sullivan \cite{Sullivan87} relates $\lambda_0$ and $\delta$ by 
\[\lambda_0=\begin{cases}
\frac{(n-1)^2}{4} \quad\text{ if } \delta\leq\frac{n-1}2,\\
\delta(n-1-\delta)\quad\text{ if } \frac{n-1}2<\delta\leq n-1.\\
\end{cases}
\]
By Cheeger's inequality, we have $h(M)\leq 2\sqrt{\lambda_0}\leq n-1$, and thus Corollary \ref{Cor:Cheeger} is sharp when $\delta\rightarrow 0^+$. On the other hand, for a classical Schottky group $\Ga<\op{Isom}(\mathbf{H}_\R^n)$ ($n\geq 3$), there exists $\eps_n>0$ such that $\delta(\Ga)\leq n-1-\eps_n$ by Phillips--Sarnak \cite[Prop 3.10]{PhillipsSarnak} ($n\geq 4$) and Doyle \cite{Doyle} ($n=3$). Combining this with our work, we obtain an explicit uniform lower bound $\eps_n$ on the Cheeger constant in this setting. This result was further extended by Bowen \cite{Bowen15} for $n\geq 4$ and even, or Kleinian groups which are subgroups of $3$--dimensional hyperbolic lattices.

We also can resolve a conjecture of Dey--Kapovich \cite[Conj 20]{Dey-Kapovich} (see also \cite[Conj 10.3]{Kapovich22}) on the non-existence of compact, complex subvarieties of complex hyperbolic manifolds. 

\begin{theorem}[Dey--Kapovich Conjecture]\label{Thm:DeyKapConj}
If $M$ is a complete, complex hyperbolic $n$--manifold with $\delta(M)<2k$ for some positive integer $k\leq n$, then there exists no compact complex $k$--dimensional subvarieties of $M$.
\end{theorem}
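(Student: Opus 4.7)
The strategy is to contradict the existence of a compact complex $k$--subvariety by running the natural flow $F_t$ on it and comparing two estimates on the $2k$--volume: a calibration lower bound from Wirtinger's inequality, and a dynamical upper bound from a complex hyperbolic refinement of Corollary~\ref{T:MainJac}.

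Suppose $V\subset M$ is a compact complex $k$--subvariety, and let $\omega$ denote the K\"ahler form of $M$. Replacing $V$ by its smooth locus (whose complement has real dimension at most $2k-2$, hence negligible $2k$--measure), Wirtinger's equality gives $\vol_{2k}(V)=\frac{1}{k!}\int_V\omega^k>0$. Because the natural flow is smoothly isotopic to the identity ($F_0=\mathrm{Id}$), the cycle $F_t(V)$ is homologous to $V$; closedness of $\omega^k$ then yields $\int_{F_t(V)}\omega^k=\int_V\omega^k$, and Wirtinger's \emph{inequality} applied to the generally non-complex submanifold $F_t(V)$ gives
\[
\vol_{2k}(F_t(V))\;\geq\;\frac{1}{k!}\int_{F_t(V)}\omega^k\;=\;\vol_{2k}(V)\;>\;0\qquad\text{for all }t>0.
\]

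For the opposing upper bound, a sharpening of Corollary~\ref{T:MainJac} is required, since its direct application with $k\mapsto 2k$ yields only $e^{-\delta(2k-1-\delta)t}$ and fails in the range $2k-1\leq\delta<2k$. In complex hyperbolic $n$--space, normalized so that the holomorphic sectional curvature equals $-4$, the Hessian of a Busemann function has eigenvalues $0$, $2$, and $1$ (with multiplicity $2n-2$) along $\nabla b_\xi$, $J\nabla b_\xi$, and the orthogonal complex hyperplane respectively. A short projection computation shows that for \emph{any} $J$--invariant $2k$--plane $W$,
\[
\op{tr}(\nabla^2 b_\xi|_W)=2k+\bigl(|P_W J\nabla b_\xi|^2-|P_W\nabla b_\xi|^2\bigr)=2k,
\]
the last equality holding because $J$ is a linear isometry of $W$. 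Writing $\nabla^2 f=\delta\int\nabla^2 b_\xi\,d\tilde\mu_x-\delta^2\op{Cov}$ for the covariance correction and bounding $\op{tr}(\op{Cov}|_W)\leq 1$ via $|\nabla b_\xi|\equiv 1$ gives $\op{tr}(\nabla^2 f|_W)\geq\delta(2k-\delta)$ on every $J$--invariant $W$. Running the ODE behind Theorem~\ref{prop:phi_contracts} along the complex tangent plane $T_xV$ then yields $\op{Jac}_{2k}(F_t|_{T_xV})\leq e^{-\delta(2k-\delta)t}$, and integrating over $V$ gives $\vol_{2k}(F_t(V))\leq e^{-\delta(2k-\delta)t}\vol_{2k}(V)\to 0$ as $t\to\infty$, contradicting the lower bound whenever $0<\delta<2k$. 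The degenerate case $\delta=0$ is handled by Patterson--Sullivan elementarity, and $k=0$ is vacuous.

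The main obstacle is making the pointwise improvement genuine along the flow: although the initial tangent plane $T_xV$ is $J$--invariant, the evolved plane $W_s=dF_s(T_xV)$ generally is not, since $f=-\log\|\mu\|$ is not pluriharmonic and hence $\nabla^2 f$ need not commute with $J$. One must either run the ODE argument with a drift term controlling the angle between $W_s$ and $JW_s$ and verify that the extra $\delta^2$--slack in the covariance estimate absorbs it, or recast the upper bound as an integral of $F_t^*\omega^k$--type forms so that the K\"ahler calibration carries through symmetrically on both sides. Handling the singular locus of $V$ (standard via Hironaka resolution or positive closed currents) is a secondary technicality; the K\"ahler--refined ODE analysis is the technical heart of the proof.
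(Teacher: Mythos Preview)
Your approach is essentially correct and parallels the paper's: both use the K\"ahler calibration (Wirtinger) to make the subvariety volume-minimizing in its homology class, and both rely on the improved trace estimate $\tr(\nabla df|_W)\geq \delta(2k-\delta)$ on $J$--invariant $2k$--planes $W$. (The paper derives this estimate via the $J$--symmetrization $L_\theta+J^{-1}L_\theta J=\op{diag}(2-\delta,2-\delta,2,\dots,2)$ rather than your covariance bound, but the two arguments yield the same conclusion; this is the paper's Lemma~\ref{lem:holomorphic_contract}.)

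However, the ``main obstacle'' you identify---that the evolved plane $W_s=dF_s(T_xV)$ need not be $J$--invariant for $s>0$---is a non-issue, and the paper sidesteps it without any drift-term analysis or $F_t^*\omega^k$ reformulation. The point is that you do not need $t\to\infty$: since $V$ is volume-minimizing in its homology class, \emph{any} strict inequality $\vol_{2k}(\phi_{-t}(V))<\vol_{2k}(V)$ for \emph{some} $t>0$ already gives the contradiction. By Lemma~\ref{lem:volume_rate_trace}, the logarithmic derivative of the $2k$--volume along the flow at $t=0$ is governed pointwise by $\tr(\nabla df|_{T_xV})$, and at $t=0$ each tangent plane $T_xV$ \emph{is} $J$--invariant, so this trace is $\geq\delta(2k-\delta)>0$ everywhere on (the smooth locus of) $V$. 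Hence the volume strictly decreases under $\phi_{-t}$ for all sufficiently small $t>0$, and you are done. In short: drop the attempt to propagate the Jacobian bound for all $t$ and simply differentiate at $t=0$.
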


The proof of this uses a refined version of Corollary \ref{T:MainJac} when restricting to holomorphic tangent subspaces (Lemma \ref{lem:holomorphic_contract}) and that compact, complex subvarieties are volume minimizing.  

\subsection{Homological Dimension}

In this section and the next, we describe applications on the topology of $X/\Gamma$ via homology and cohomology. The methods we use are different for each but both make essential use of the natural flow and Patterson--Sullivan theory. Before stating explicitly the results, we briefly describe the methods used. As to why we need to use different methods for each, note that as $X/\Gamma$ will generally be non-compact (in fact infinite volume), the homology is dual to the compactly supported cohomology while the cohomology is dual to the local finite homology. The latter involves the topology at infinity and so is more delicate to control.

To establish the necessary homological vanishing results, using Theorem \ref{prop:phi_contracts}, we see that the natural flow is contracting in sufficiently high dimensions; this dimensional threshold is given by the critical index. In this case, if we have a $k$--dimensional cycle, by pushing it forward under the flow, we see that the mass/volume of the cycle under the flow goes to zero exponentially fast. It follows then from Gromov's mass vanishing theorem (see Theorem \ref{thm:small_vanishing} below) that the cycle must be trivial. This should be viewed as a continuous version of the iterative method introduced by Kapovich \cite{Kapovich}. His method uses iterations of the natural map and estimates on the $k$--Jacobian to prove that the mass of the cycle goes to zero exponentially fast under the push forward of the $n$--iteration. As we use  Theorem \ref{prop:phi_contracts}, we require $k$--trace bounds and so the critical index is defined via $k$--traces. In the case when $X$ is a symmetric space, we estimate this critical index via Patterson--Sullivan theory with conformal densities playing an important role. 

For the cohomology, we use a proper Morse perturbation of our potential function and Morse theory in combination with Patterson--Sullivan theory to establish the necessary vanishing. We do that by bounding the index of the critical points that arise; this is again given by the critical index. We then get bounds on the dimension of the cells in our cell decomposition and hence obtain control on the topology of $X/\Gamma$. In order to ensure we can perturb our potential to a proper Morse function with both a lower bound and indices controlled at the critical points, we need a growth condition on the norms of the Patterson--Sullivan measures. We show that when the associated Bowen--Margulis measure on the unit tangent bundle is finite, then our growth condition is satisfied (see Theorem \ref{thm:finiteness criterion}), and without these growth conditions, the vanishing results cannot always hold. We consider the question of which groups satisfy the growth condition needed for the perturbation to be of independent interest to those working in Patterson--Sullivan theory. Unlike in the case of homology, our cohomological vanishing results are the first of this kind that do not make use of homological dimension.

\subsubsection{Hadamard Spaces}

Given a commutative ring with identity $R$ and a torsion-free group $\Gamma$, the \textbf{homological dimension of $\Ga$} is defined to be
\[\op{hd}_R (\Ga):=\inf\{s\in \mathbb Z_+\,:\, H_i(\Ga;V)=0 \text{ for any } i>s \text{ and any $R\Ga$--module $V$}\},\]
and similarly the \textbf{cohomological dimension of $\Ga$} is defined as
\[\op{cd}_R (\Ga):=\inf\{s\in \mathbb Z_+\,:\, H^i(\Ga;V)=0 \text{ for any } i>s \text{ and any $R\Ga$--module $V$}\}.\]
As a convention, when $R=\mathbb Z$ we simply omit the subscripts. These dimensions characterize the algebraic/topological size $\Gamma$ or $M$ when $\Gamma = \pi_1(M)$.  We remark that $H_k(\Gamma;V)=H_k(\Omega;\V)$ for any CW complex $\Omega$ which is a $K(\Gamma,1)$ and a certain flat bundle $\V$ over $\Omega$ (see \S \ref{sec:hom_prelim}).

Following Kapovich \cite{Kapovich}, we explore the relationship between $\mathrm{cd}_R(\Gamma)$ or $\mathrm{hd}_R(\Gamma)$ and $\delta(\Gamma)$. When $X=\mathbf{H}_\R^n$, \cite{Kapovich} established $\op{hd}_R(\Ga)\leq \delta(\Ga)+1$. Farb with the first two authors \cite{CFM19} extended this work to the real rank one setting (see also \cite[Cor 5.6]{Carron-Pedon04}). Iterations of the natural map were used in both \cite{Kapovich} and \cite{CFM19} to establish these results in combination with estimates on the $k$--Jacobian. One of the main purposes of this paper is to extend these results to arbitrary manifolds with bounded sectional curvatures. With Theorem \ref{prop:phi_contracts}, we obtain the following relative homological vanishing result. It is worth noting that we do not require $\Gamma$ be finitely generated in our vanishing result. 

\begin{theorem}[Relative Homological Vanishing]\label{thm:main}
If $X$ is a Hadamard space with bounded sectional curvatures, $\Gamma < \mathrm{Isom}(X)$ is a discrete, torsion-free subgroup with $M = X/\Gamma$, and $\mathbb{V}$ is a flat bundle over $M$, then for every $\epsilon > 0$ and $k\geq  j_X(\Gamma)$, the homomorphism $H_k(M_{\eps};\mathbb V)\rightarrow H_k(M;\mathbb V)$ induced by inclusion is surjective.
\end{theorem}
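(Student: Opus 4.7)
The plan is to represent any class in $H_k(M;\mathbb V)$ by a cycle whose $k$-volume is so small that Gromov's small-simplices vanishing theorem (Theorem \ref{thm:small_vanishing}) allows us to push it into $M_\epsilon$ modulo a boundary. The flow of smooth self-diffeomorphisms $\phi_{-t}$ from Theorem \ref{prop:phi_contracts}, applied to the potential $\bar f$ in (\ref{eq:potential}), produces the required small representative.

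Concretely, I would fix $\alpha\in H_k(M;\mathbb V)$ and choose a smooth singular cycle $\sigma$ representing $\alpha$. Because $\{\phi_{-s}\}_{0\leq s\leq t}$ is a smooth isotopy of $M$ starting at the identity, parallel transport of $\mathbb V$ along this isotopy identifies the push-forward $(\phi_{-t})_\ast\sigma$ with $\sigma$ in $H_k(M;\mathbb V)$. The hypothesis $k\geq j_X(\Gamma)$ is precisely the statement that the constant $\epsilon_k$ from Theorem \ref{prop:phi_contracts} is strictly positive for $\bar f$, so applying that theorem simplex-by-simplex yields
\begin{equation*}
\vol_k\bigl((\phi_{-t})_\ast\sigma\bigr)\leq e^{-\epsilon_k t}\vol_k(\sigma)\xrightarrow{t\to\infty}0.
\end{equation*}
After a preliminary fine subdivision of $\sigma$, I would choose $t$ large enough that every simplex of $(\phi_{-t})_\ast\sigma$ falls under the threshold in Theorem \ref{thm:small_vanishing}; its conclusion then supplies a $(k+1)$-chain in $M$ whose boundary is the difference between $(\phi_{-t})_\ast\sigma$ and a cycle $\sigma'$ supported in $M_\epsilon$, and hence $\alpha=\iota_\ast[\sigma']$ lies in the image of $H_k(M_\epsilon;\mathbb V)\to H_k(M;\mathbb V)$.

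The main obstacle is that Theorem \ref{prop:phi_contracts} only controls $k$-volume, whereas Theorem \ref{thm:small_vanishing} typically needs both small volume and small diameter of each simplex in order to construct a local filling outside $M_\epsilon$. I would handle this by using the uniform $L^\infty$ bound on $|\nabla f|$ (coming from the bounded curvature hypothesis and the normalization of the conformal density) to obtain a Lipschitz estimate for $\phi_{-s}$ on the finite time interval $s\in[0,t]$, and then subdividing $\sigma$ finely enough that every simplex of $(\phi_{-s})_\ast\sigma$ stays below the Margulis constant throughout the isotopy. A secondary subtlety is that $\Gamma$ is not assumed finitely generated, which rules out a single global finite CW model for $M$; this is addressed by working one homology class at a time, since each smooth cycle has compact support and the flow and filling arguments only see a relatively compact neighborhood in $M$.
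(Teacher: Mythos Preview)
Your core strategy---flow a representing cycle by $\phi_{-t}$ until its $k$-volume drops below the threshold $\Theta$ of Theorem \ref{thm:small_vanishing}, then invoke that theorem---is exactly what the paper does. The paper phrases the conclusion via the long exact sequence (surjectivity of $i_k$ is equivalent to vanishing of $q_k\colon H_k(M;\mathbb V)\to H_k(M,M_\eps;\mathbb V)$), but your restatement ``homologous to a cycle supported in the thin part'' is the same thing.

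Two points where you are adding difficulties that are not there. First, Theorem \ref{thm:small_vanishing} as stated requires only that the \emph{total volume} of the relative class be below $\Theta$; there is no diameter hypothesis on the simplices. So the Lipschitz estimate on $\phi_{-s}$ and the fine subdivision to keep diameters below the Margulis constant are unnecessary (and subdivision would not help anyway, since it preserves total volume). You simply flow until $\sum_i\vol(\phi_{-t}\circ\sigma_i)<\Theta$, which Theorem \ref{prop:phi_contracts} guarantees for large $t$. Second, the concern about $\Gamma$ not being finitely generated is also moot: nothing in the argument uses a finite CW model; both the flow and Theorem \ref{thm:small_vanishing} operate on a single chain with compact support.

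One genuine detail to fix: the conclusion of Theorem \ref{thm:small_vanishing} is vanishing under $H_k(M,M_\eps;\mathbb V)\to H_k(M,M_{2\eps};\mathbb V)$, not vanishing already in $H_k(M,M_\eps;\mathbb V)$. So your $\sigma'$ is supported in $M_{2\eps}$, not $M_\eps$. The fix is trivial: given the target $\eps$, apply Theorem \ref{thm:small_vanishing} at scale $\eps/2$.
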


Here $M_{\eps}$ is the $\eps$--thin part of $M$ defined by $M_{\eps}=\set{x\in M\,:\, \op{injrad}(x)<\eps}$ where $\op{injrad}(x)$ is the injectivity radius of $M$ at $x$. The value $j_X(\Gamma)$ is called the \textbf{critical index of $\Gamma$}. It is computable in terms of the geometry of $X$ and the asymptotic behavior of the $\Gamma$--action (see Definition \ref{D:CritIndex}). One can view this result as saying the homology of $\Ga$ vanishes when the degree is at least $j_X(\Ga)$ relative to the homology of the ``zero-thin part at infinity.'' We also obtain an explicit relative vanishing result that extends the relative vanishing result of Kapovich \cite{Kapovich} (see Theorem \ref{thm:relative-vanishing}).

As colimits preserve epimorphisms, we have the following corollary.

\begin{corollary}\label{cor:hd_main}
If $X$, $\Gamma$, $\mathbb V$ and $M$ are as in the statement of Theorem \ref{thm:main}, then 
\[ \lim_{\eps\rightarrow 0} H_k(M_{\eps};\mathbb V)\rightarrow H_k(M;\mathbb V) \] 
is surjective whenever $k\geq  j_X(\Ga)$. Here $\lim_{\eps\rightarrow 0}$ denotes the colimit (direct limit) with respect to the system $\{M_{\eps}:\eps>0\}$ under inclusions. 
\end{corollary}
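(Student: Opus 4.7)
The proof is immediate from Theorem \ref{thm:main}. First, I observe that the directed system of inclusions $\{M_\varepsilon \hookrightarrow M\}_{\varepsilon > 0}$ yields, on twisted homology, a compatible family of maps $\iota_\varepsilon \colon H_k(M_\varepsilon; \mathbb{V}) \to H_k(M; \mathbb{V})$. By the universal property of the direct limit, these assemble into a canonical homomorphism
\[ \iota \colon \lim_{\varepsilon \to 0} H_k(M_\varepsilon; \mathbb{V}) \longrightarrow H_k(M; \mathbb{V}), \]
with each $\iota_\varepsilon$ factoring through the structure map $\pi_\varepsilon \colon H_k(M_\varepsilon; \mathbb{V}) \to \lim_{\varepsilon \to 0} H_k(M_\varepsilon; \mathbb{V})$, so that $\iota_\varepsilon = \iota \circ \pi_\varepsilon$.

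The plan now is to invoke Theorem \ref{thm:main}: for every $\varepsilon > 0$ and every $k \geq j_X(\Gamma)$ the map $\iota_\varepsilon$ is surjective. Since $\iota_\varepsilon = \iota \circ \pi_\varepsilon$, the image of $\iota$ must contain the image of $\iota_\varepsilon$, which is all of $H_k(M; \mathbb{V})$. Hence $\iota$ is surjective, which is the desired conclusion.

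There is no serious obstacle here; the substantive work is already carried out in Theorem \ref{thm:main}, and the colimit step reduces to the elementary categorical fact that if a composition $\iota \circ \pi_\varepsilon$ is surjective then $\iota$ itself must be surjective. Equivalently, filtered colimits of $R$-modules are exact, and therefore preserve epimorphisms, which is the form in which the corollary is stated.
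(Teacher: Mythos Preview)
Your proof is correct and follows essentially the same approach as the paper, which simply notes that colimits preserve epimorphisms and then states the corollary. You have merely unpacked this categorical fact explicitly via the factorization $\iota_\varepsilon = \iota \circ \pi_\varepsilon$.
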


We also have the following corollary (see Corollary \ref{cor:hd_bound} and Corollary \ref{cor:neg_curved}).

\begin{corollary}[Homological Vanishing]\label{cor:hd_main2}
Let $X$, $\Gamma$ be as in Theorem \ref{thm:main}. If either 
\begin{enumerate}
\item $X$ has pinched negative curvature and $\Ga$ has no parabolic fixed point, or
\item the injectivity radius of $X/\Ga$ is uniformly bounded from below,
\end{enumerate}
then $\op{hd}_R(\Gamma)\leq j_X(\Ga)-1$.
\end{corollary}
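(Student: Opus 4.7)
The plan is to reduce the vanishing of $H_k(\Gamma;V)$ in high degrees to the vanishing of $H_k(M_\epsilon;\mathbb V)$ for some sufficiently small $\epsilon>0$, and then verify that vanishing geometrically in each of the two cases. Because $X$ is a Hadamard manifold, hence contractible by Cartan--Hadamard, and $\Gamma$ is discrete and torsion-free, the quotient $M=X/\Gamma$ is a model for $K(\Gamma,1)$. Thus $H_k(\Gamma;V)\cong H_k(M;\mathbb V)$ where $\mathbb V$ is the flat bundle over $M$ associated to the $R\Gamma$-module $V$. Consequently $\op{hd}_R(\Gamma)\leq j_X(\Gamma)-1$ will follow once, for each $k\geq j_X(\Gamma)$, we exhibit some $\epsilon>0$ with $H_k(M_\epsilon;\mathbb V)=0$: Theorem \ref{thm:main} then forces $H_k(M;\mathbb V)=0$ via the surjection from $H_k(M_\epsilon;\mathbb V)$.

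In case (2) I would simply take any $\epsilon<\epsilon_0$, where $\epsilon_0>0$ is a uniform lower bound on the injectivity radius of $M$; then $M_\epsilon=\emptyset$ by definition of the thin part, so $H_k(M_\epsilon;\mathbb V)=0$ for every $k$. In case (1) I would pick $\epsilon$ smaller than the Margulis constant for the curvature bounds of $X$. By the Margulis lemma, the subgroup of $\Gamma$ generated by the elements moving any given basepoint by less than $2\epsilon$ is virtually nilpotent; since $\Gamma$ has no parabolic fixed points, every nontrivial element is loxodromic, so this subgroup is virtually infinite cyclic, and being torsion-free is genuinely infinite cyclic. Each connected component of $M_\epsilon$ is therefore a Margulis tube, a topological tubular neighborhood of a short closed geodesic that deformation retracts onto its core circle. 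Since $H_k(S^1;\mathbb V)=0$ for every $k\geq 2$ and every flat bundle $\mathbb V$, summing over components gives $H_k(M_\epsilon;\mathbb V)=0$ for all $k\geq 2$, and the inequality $j_X(\Gamma)\geq 2$ from Definition \ref{D:CritIndex} ensures that this covers every required degree.

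The serious work has already been absorbed into Theorem \ref{thm:main}, which relies on the natural flow and the $k$-Jacobian estimate of Corollary \ref{T:MainJac}; the present corollary only needs the classical thin/thick decomposition as geometric input. The main obstacle in carrying out this plan is the handling of case (1): one must verify that the absence of parabolic fixed points really rules out cuspidal components of $M_\epsilon$, so that every component is a tube around a short closed geodesic rather than a cusp neighborhood. This requires a careful application of the Margulis lemma in the pinched negative curvature setting, together with the identification of the generating short isometry as loxodromic. Once that point is settled, the rest reduces to elementary CW-computation of the homology of $S^1$ with local coefficients.
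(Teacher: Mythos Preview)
Your proposal is correct and matches the paper's approach essentially line for line: the paper proves case (2) as Corollary~\ref{cor:hd_bound} by noting $M_\eps=\emptyset$ for $\eps$ below the injectivity radius bound, and proves case (1) within the proof of Corollary~\ref{cor:neg_curved} by using the Margulis Lemma to write $M_\eps$ as a disjoint union of tubes $T_i\simeq S^1\times D^{n-1}$, whence $H_k(M_\eps;\bb{V})=0$ for $k\geq 2$, and then invoking $j_X(\Gamma)\geq 2$. The paper even records Corollary~\ref{cor:hd_main2} simply as ``the combination of our previous two corollaries,'' so there is nothing to add.
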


\subsubsection{Symmetric Spaces}

When $X=\mathbf{H}_\R^n$ (resp. $K_X\leq -1$), then $j_X(\Ga)=\floor{\delta(\Ga)}+2$ (resp. $j_X(\Ga)\leq\floor{\delta(\Ga)}+2$). This recovers \cite{Kapovich} and most of \cite{CFM19}. More precisely, we prove:

\begin{corollary}[Homological Vanishing: Real Rank One]\label{cor:rank one} 
Let $X=G/K$ be a rank one symmetric space of dimension $n$ such that all sectional curvatures are constant $-1$ or all are in $[-4,-1]$, and let $\Gamma<G$ be any discrete, torsion-free subgroup with no parabolics.
\begin{enumerate}
\item 
If $X = \mathbf{H}_\R^n$ is real hyperbolic $n$--space, then $\op{hd}_R(\Gamma)\leq \delta+1$.
\item 
If $X = \mathbf{H}_\C^n$ is complex hyperbolic $n$--space ($n \geq 2$), then
\[ \op{hd}_R(\Gamma)\leq \begin{cases} \delta+1 & \delta< 2n-2 \\ 2n-1 & 2n-2\leq \delta< 2n \\ 2n & \delta=2n\\  \end{cases}. \]
\item 
If $X = \mathbf{H}_{\mathbb{H}}^n$ is quaternionic hyperbolic $n$--space ($n \geq 2$), then
\[ \op{hd}_R(\Gamma)\leq \begin{cases} \delta+1 & \delta< 4n-4 \\ 4n-3 & 4n-4\leq \delta< 4n-2\\ 4n-2 & 4n-2\leq \delta< 4n \\ 4n & \delta=4n+2\\ \end{cases}. \]
\item 
If $X = \mathbf{H}_{\mathbb{O}}^2$ is the Cayley hyperbolic plane, then
\[ \op{hd}_R(\Gamma)\leq \begin{cases}  \delta+1 & \delta< 8 \\ 9 & 8\leq \delta<10\\ 10 & 10\leq \delta< 12 \\ 11 & 12\leq \delta< 14\\ 12 & 14\leq \delta<16\\ 16 & \delta=22\\ \end{cases}. \]
\end{enumerate}
\end{corollary}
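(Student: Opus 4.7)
Proof Proposal.

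The strategy rests on Corollary \ref{cor:hd_main2}(1): since $\Gamma$ is torsion-free with no parabolics, the inequality $\op{hd}_R(\Gamma) \le j_X(\Gamma)-1$ reduces the problem to bounding the critical index. By Theorem \ref{prop:phi_contracts} applied to the potential $\bar f$ of \eqref{eq:potential}, $j_X(\Gamma)$ is at most the smallest $k$ for which $\op{tr}_k(\nabla d\bar f)$ is uniformly positive. Differentiating $\bar f=-\log\|\mu_x\|$ and using the Patterson--Sullivan transformation $d\mu_x/d\mu_p = e^{-\delta B_\xi(x,p)}$ yields the standard BCG-type identity
\[
\nabla d\bar f \;=\; \delta \int_{\partial X} \nabla dB_\xi \, d\tilde\mu_x(\xi) \;-\; \delta^2 \op{Cov}_{\tilde\mu_x}(\nabla B_\xi),
\]
where $\tilde\mu_x = \|\mu_x\|^{-1} e^{-\delta B_\xi(x,p)} \mu_p$ is a probability measure. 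The covariance term is positive semi-definite with total trace at most $\|\nabla B_\xi\|^2 = 1$, and $\op{tr}_k$ is concave and super-additive on symmetric matrices, so
\[
\op{tr}_k(\nabla d\bar f) \;\ge\; \delta\, \op{tr}_k(\nabla dB_\xi) \;-\; \delta^2,
\]
the first term being independent of $\xi$ by the two-point homogeneity of the $\Aut(X)$-action on $\partial X$.

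Everything thus reduces to pointwise spectral data for $\nabla dB_\xi$ on each symmetric space. Under the curvature normalization $-4 \le K_X \le -1$, a Riccati computation on horospheres yields that $\nabla dB_\xi$ has eigenvalue $0$ along $\nabla B_\xi$, eigenvalue $2$ with multiplicity $d-1$ along the $\mathbb{K}$-line through $\nabla B_\xi$, and eigenvalue $1$ along the remaining $\dim_\R X - d$ horospherical directions, where $d = \dim_\R \mathbb{K} \in \{1,2,4,8\}$ for $X = \mathbf{H}_\R^n,\mathbf{H}_\C^n,\mathbf{H}_{\mathbb{H}}^n,\mathbf{H}_{\mathbb{O}}^2$. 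Sorting these eigenvalues ascending, $\op{tr}_k(\nabla dB_\xi) = k-1$ for $1 \le k \le \dim_\R X - (d-1)$ and then increases by $2$ at each successive $k$, reaching the volume entropy $n-1,\,2n,\,4n+2,\,22$ at $k = \dim_\R X$. Since $\op{tr}_k(\nabla d\bar f)>0$ exactly when $\delta < \op{tr}_k(\nabla dB_\xi)$, a case analysis in each integer range of $k$ produces $j_X(\Gamma)$ and hence via Corollary \ref{cor:hd_main2}(1) the homological dimension bound matching the table in (1)--(4). For instance, in the complex case $\op{tr}_{2n}(\nabla dB_\xi)=2n$, so $j_X(\Gamma)\le 2n$ whenever $\delta<2n$, giving $\op{hd}_R(\Gamma)\le 2n-1$ in that range.

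At the maximal endpoints $\delta = 2n,\,4n+2,\,22$, the critical exponent coincides with the volume entropy of $X$, so standard rigidity forces $\Gamma$ to be a uniform lattice; then $X/\Gamma$ is closed and $\op{hd}_R(\Gamma) = \dim_\R X$ by Poincar\'e duality. The main obstacle in the argument is ensuring that the Jensen step does not wash out the finer eigenvalue multiplicities when averaging against $\tilde\mu_x$; this is handled by the two-point homogeneity of each rank one boundary, which makes $\nabla dB_\xi$ have the same ordered spectrum for every $\xi$ so that Jensen applies with pointwise equality. The remaining bookkeeping across the piecewise ranges of $\delta$ is routine case-splitting.
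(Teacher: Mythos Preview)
Your approach is essentially the paper's: compute the horospherical spectrum of $\nabla dB_\xi$ in each rank-one geometry, read off $j_X(\Gamma)$ as the least $k$ with $\operatorname{tr}_k(\nabla dB_\xi)>\delta$, and invoke Corollary~\ref{cor:hd_main2}(1). Two remarks, however. First, the passage through $\nabla d\bar f$ and the covariance estimate is an unnecessary detour: by Definition~\ref{D:CritIndex} the critical index $j_X(\Gamma)$ is already formulated directly in terms of $\operatorname{tr}_k(\nabla dB_{(x,\theta)})>\delta$, so once you have the horospherical eigenvalues you can read off $j_X$ immediately without ever computing $\nabla d\bar f$ (this is all the paper does; your Jensen/super-additivity step re-derives what the definition already encodes). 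Second, your endpoint argument contains an error: there is no rigidity theorem asserting that $\delta=2n$ forces $\Gamma$ to be a lattice in the complex hyperbolic case---Corlette's gap applies only to $\mathbf{H}^n_{\mathbb H}$ and $\mathbf{H}^2_{\mathbb O}$. Fortunately this is harmless, since at each maximal endpoint the asserted bound is just $\operatorname{hd}_R(\Gamma)\le\dim_\R X$, which holds trivially for any aspherical manifold (equivalently, $j_X(\Gamma)\le n+1$ by definition) and requires no rigidity.
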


For higher-rank symmetric spaces, the estimate on the critical index is more involved and will be discussed in \cite{CMW26}.

\subsection{Cohomological Dimension}

To control the cohomology, we use Morse theory to analyze the topology of the quotient manifold. The potential function \eqref{eq:potential} for the natural flow is $C^2$ and can therefore be perturbed to be Morse. For our specifically chosen Morse function, we can control the possible indices in terms of the critical exponent. However, since our manifold is non-compact, we also need our function to be proper and bounded from below. To ensure properness, a second perturbation is required (see Corollary \ref{cor:perturb_Morse}) that also maintains index control. To obtain a uniform lower bound, we will also need some conditions on $\Gamma$ (see Remark \ref{Rem:LowerBoundMorse}) in terms of the norm growth of $\mu_x$. Given a conformal density $\mu$, we say $||\mu_x||$ has \textbf{subexponential growth}, if for any $\eta>0$, there exists $C(\eta)>0$ such that
\[||\mu_x||\leq C(\eta)e^{\eta \cdot d(x,p)},\;\text{ for all } x\in M,\]
where $p\in M$ is some chosen basepoint. We prove that,

\begin{theorem}[Cohomological Vanishing: Subexponential Growth]\label{thm:cd}
If $X$ is a Hadamard space and $\Ga<\op{Isom}(X)$ is a discrete, torsion-free subgroup with a conformal density $\mu$ such that $\norm{\mu_x}$ has subexponential growth, then $\cd(\Ga)\leq j_X(\mu)-1$.
\end{theorem}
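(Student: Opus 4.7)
The plan is to apply classical Morse theory to a proper, bounded-below Morse function on $M=X/\Ga$ whose critical points all have Morse index at most $j_X(\mu)-1$. Since $X$ is a contractible Hadamard manifold on which $\Ga$ acts freely and properly discontinuously, $M$ is a $K(\Ga,1)$, so a CW model of dimension at most $d$ for $M$ forces $\cd(\Ga)\leq d$.

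The starting point is the potential $f(x)=-\log\|\mu_x\|$ on $M$. By the definition of $j_X(\mu)$, we have $\op{tr}_k(\nabla d\bar f)\geq \eps_k>0$ uniformly on $X$ for every $k\geq j_X(\mu)$, and this descends to $M$. If $\lambda_1\leq\cdots\leq\lambda_n$ are the eigenvalues of $\nabla df$ at a point, the inequality $\lambda_1+\cdots+\lambda_{j_X(\mu)}>0$ forces $\lambda_{j_X(\mu)}>0$, so at most $j_X(\mu)-1$ eigenvalues of the Hessian are non-positive. Consequently, at every non-degenerate critical point, the Morse index is at most $j_X(\mu)-1$; moreover this eigenvalue bound is $C^2$-open, so it persists under any sufficiently small $C^2$ perturbation of $f$.

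The function $f$ itself need not be proper or bounded below, which is where subexponential growth enters: it yields $f(x)\geq -\eta\, d_M(x,p)-\log C(\eta)$ for every $\eta>0$. To compensate, I would add a smooth $\Ga$-invariant exhaustion $\rho\colon X\to \R$ with at worst quadratic growth in distance and uniformly bounded Hessian $\|\nabla d\rho\|\leq C_0$ (constructed, for instance, by smoothing the infimum over $\Ga$ of $d(\cdot,\gamma\tilde p)^2$, or via a partition-of-unity construction on the quotient). For $\epsilon>0$ small relative to the $\eps_k$, the perturbation $\tilde f=f+\epsilon\rho$ preserves $\op{tr}_k(\nabla d\tilde f)\geq \eps_k-k\epsilon C_0>0$ for $k\geq j_X(\mu)$. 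Choosing $\eta<\epsilon$ and exploiting the quadratic growth of $\rho$ then dominates the linear lower bound coming from $f$, forcing $\tilde f\to +\infty$ at infinity; in particular $\tilde f$ is proper and bounded below.

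Finally, Corollary \ref{cor:perturb_Morse} (or a standard transversality argument preserving properness) replaces $\tilde f$ by a $C^2$-close proper, bounded-below Morse function $\hat f$ whose critical points all have index at most $j_X(\mu)-1$. Classical Morse theory then exhibits $M$ as homotopy equivalent to a CW complex of dimension at most $j_X(\mu)-1$, which, combined with $M$ being a $K(\Ga,1)$, gives $\cd(\Ga)\leq j_X(\mu)-1$. The main difficulty will be the construction of $\rho$: one needs a $\Ga$-invariant smooth exhaustion of $X$ which descends to a proper function on $M$, grows at least quadratically, and has uniformly bounded Hessian so that $\epsilon\rho$ can be made $C^2$-small enough to preserve the $k$-trace positivity. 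The Hadamard geometry of $X$ and the subexponential growth hypothesis are precisely what make this compensation possible.
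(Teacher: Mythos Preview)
Your overall strategy is exactly the paper's: perturb $f=-\log\|\mu_x\|$ to a proper, bounded-below Morse function while preserving $\op{tr}_{j_X(\mu)}(\nabla d\,\cdot\,)>0$, then invoke Theorem~\ref{thm:cd_Morse}. The gap is in your construction of $\rho$. On a Hadamard manifold with strictly negative curvature the Hessian of $d(\cdot,q)^2$ is \emph{not} uniformly bounded: in directions tangent to the distance sphere it equals $2r$ times the second fundamental form of the sphere, hence is at least $2r$ when $K\leq -1$ and grows without bound. Passing to $\inf_\ga d(\cdot,\ga\til p)^2$ and smoothing does not cure this, since locally (away from the cut locus of $p$ in $M$) you are still looking at a squared distance on a nonpositively curved manifold. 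So your $\rho$ will not satisfy $\|\nabla d\rho\|\leq C_0$, and the trace-preservation step $\eps_k-k\epsilon C_0>0$ collapses.

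The paper sidesteps this by using a \emph{linearly} growing perturbation rather than a quadratic one. Tam's result (Theorem~\ref{thm:distance_like}, which uses the standing bounded-curvature hypothesis on $X$) produces a smooth $g$ on $M$ with $d(p,x)+1\leq g(x)\leq d(p,x)+C$ and $\|\nabla dg\|\leq C$. Linear growth is enough precisely because subexponential growth of $\|\mu_x\|$ lets you choose $\eta$ arbitrarily small: with $\eta=\tfrac{\eps}{4C}$ one gets $f+\tfrac{\eps}{2C}g\geq \tfrac{\eps}{4C}\,d(p,x)-O(1)$, which is proper and bounded below, while $\|\nabla d(\tfrac{\eps}{2C}g)\|\leq \tfrac{\eps}{2}$ is small enough to keep the $k$-trace strictly positive. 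A final $C^2$-small perturbation to a genuine Morse function (density of Morse functions in the strong topology) then finishes as you describe. In short, you identified the right difficulty, but quadratic-growth exhaustions with bounded Hessian are not available in negative curvature; the fix is to exploit the full strength of ``subexponential'' to make a linear correction suffice.
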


Here $j_X(\mu)$ is the \textbf{critical index associated to the conformal density $\mu$} (see Definition \ref{D:CritIndex}). When $X$ is pinched negatively curved with sectional curvatures normalized to be in $(-\infty,-1]$ and $\mu$ has dimension $\delta(\Ga)$, we have that $j_X(\mu)\leq\floor{\delta(\Ga)}+2$. 

Whether $\norm{\mu_x}$ has subexponential growth is more mysterious. If $\Ga$ is convex-cocompact, then there exists a unique $\delta$--conformal density $\mu$ for $\delta=\delta(\Ga)$, and for this $\mu$, one can show that $||\mu_x||$ is uniformly bounded. On the other hand, as is noted in Corollary \ref{cor:rank one}, the inequality fails in the presence of parabolic subgroups of sufficiently large rank compared to $\delta$, and by our theorem $||\mu_x||$ must increase exponentially along a ray into a cusp. Sullivan first observed this phenomenon in geometrically finite manifolds of constant negative curvature \cite{Sullivan84}. Kapovich conjectured \cite[Conj 1.4]{Kapovich} that when $X=\mathbf{H}_\R^n$, the cohomological dimension relative to the parabolic subgroups of rank at least 2 is bounded above by $\delta(\Ga) + 1$. Therefore, our theorem partly answers this in the affirmative for a large class of Kleinian groups including the convex cocompact subgroups for which this was previously known.

When $X$ has pinched negative curvature, the $\delta$--conformal density $\mu_x$ (for $\delta=\delta(\Ga)$) naturally corresponds to a geodesic flow invariant measure $\nu$ on the unit tangent bundle $T^1M$, which is known as the Bowen--Margulis measure. When $\nu$ is finite, the manifold enjoys good dynamical properties. In particular, the $\delta(\Ga)$--conformal density $\mu$ is unique, $\Ga$ is of divergence type and the geodesic flow is ergodic and conservative \cite[\S 5]{Yue96}. For convex cocompact manifolds, $\nu$ is always finite. Peign\'e \cite{Peigne} gave some geometrically infinite examples where $\nu$ is finite as well. We connect the finiteness of $\nu$ to the boundedness of $\mu_x$, in which case our Theorem \ref{thm:cd} can be applied.

\begin{theorem}[Uniformly Bounded Norms]\label{thm:finiteness criterion}
Suppose $X$ has pinched negative curvature and $M=X/\Gamma$ is a manifold whose injectivity radius is bounded away from zero. If $\nu(T^1M)<\infty$, then the corresponding unique $\delta(\Ga)$--conformal density $\mu_x$ has uniformly bounded norm on $M$.
\end{theorem}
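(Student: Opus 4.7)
The plan is to prove the contrapositive: assume $\sup_{x \in M} \norm{\mu_x} = \infty$, and show $\nu(T^1M) = \infty$. The first observation is that $d\mu_x = e^{-\delta \beta_\xi(x,o)} d\mu_o$ combined with the $1$-Lipschitz property of Busemann functions makes $\log \norm{\mu_x}$ a $\delta$-Lipschitz function on $M$, so $x \mapsto \norm{\mu_x}$ is continuous and pointwise finite. A sequence $x_n$ with $\norm{\mu_{x_n}} \to \infty$ must therefore escape every compact set in $M$, and after thinning one arranges $\{x_n\}$ to be $2R$-separated in $M$ for an $R > 0$ to be fixed below, so that the balls $B_M(x_n,R)$ are pairwise disjoint.

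The heart of the argument is a uniform lower bound $\nu(T^1M|_{B_M(x_n,R)}) \geq c(R)\norm{\mu_{x_n}}^2$ for $n$ large. In Hopf coordinates based at a lift $\tilde x = \tilde x_n$,
\[
d\tilde\nu(\xi^-,\xi^+,t) = e^{2\delta (\xi^-|\xi^+)_{\tilde x}}\, d\mu_{\tilde x}(\xi^-)\, d\mu_{\tilde x}(\xi^+)\, dt,
\]
and in pinched negative curvature $(\xi^-|\xi^+)_{\tilde x}$ is comparable to the distance from $\tilde x$ to the geodesic $\gamma_{\xi^-\xi^+}$, so this geodesic contributes chord length $\geq R$ to $B_X(\tilde x,R)$ whenever $(\xi^-|\xi^+)_{\tilde x} \leq R/2$. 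Since the exponential weight is bounded below by $1$, one obtains
\[
\tilde\nu(T^1X|_{B_X(\tilde x,R)}) \geq R \cdot \mu_{\tilde x}^{\otimes 2}\bigl(\{d_{\tilde x}(\xi^-,\xi^+) \geq e^{-R/2}\}\bigr),
\]
where $d_{\tilde x}$ is the visual quasi-metric at $\tilde x$. The injectivity-radius bound forbids parabolic elements of $\Gamma$ and makes every limit point conical, so the uniform shadow lemma, together with the conformal change-of-basepoint formula, produces the uniform estimate $\mu_{\tilde x}(B_{d_{\tilde x}}(\xi,\epsilon)) \leq C\epsilon^\delta$. Integrating in $\xi^-$ yields $\mu_{\tilde x}^{\otimes 2}(\{d_{\tilde x} < e^{-R/2}\}) \leq Ce^{-\delta R/2}\norm{\mu_{\tilde x}}$, which is $\leq \tfrac12\norm{\mu_{\tilde x}}^2$ as soon as $\norm{\mu_{\tilde x}} \geq 2Ce^{-\delta R/2}$. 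Fixing $R$ large enough and invoking $\norm{\mu_{x_n}} \to \infty$ gives $\tilde\nu(T^1X|_{B_X(\tilde x_n,R)}) \geq \tfrac{R}{2}\norm{\mu_{x_n}}^2$ for all large $n$.

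To descend to $M$, the injectivity radius bound $\rho_0 > 0$ together with standard volume comparison in pinched negative curvature yields a uniform multiplicity bound $N(R)$ on the covering $B_X(\tilde x_n,R) \to B_M(x_n,R)$, so $\nu(T^1M|_{B_M(x_n,R)}) \geq \tilde\nu(T^1X|_{B_X(\tilde x_n,R)})/N(R) \geq c(R)\norm{\mu_{x_n}}^2$. Summing over the pairwise disjoint balls produces
\[
\nu(T^1M) \geq \sum_n c(R)\norm{\mu_{x_n}}^2 = \infty,
\]
contradicting $\nu(T^1M) < \infty$.

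The main technical obstacle will be the uniform shadow lemma in this geometrically infinite setting. In the convex-cocompact case it is classical, but here one must first use the absence of parabolics to show every limit point of $\Gamma$ is conical, and then propagate the pointwise shadow estimate for $\mu_o$ to every $\mu_{\tilde x}$ with constants that remain uniform as $\tilde x$ tends toward the limit set.
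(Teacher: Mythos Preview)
Your argument has a genuine gap at the ``uniform shadow lemma'' step, and the gap is not merely technical: the estimate $\mu_{\tilde x}(B_{d_{\tilde x}}(\xi,\epsilon))\leq C\epsilon^{\delta}$ with $C$ independent of $\tilde x$ is already at least as strong as the theorem you are trying to prove. Indeed, in pinched negative curvature the visual metric $d_{\tilde x}$ is uniformly comparable to the angular metric at $\tilde x$, so the covering number of $\partial_\infty X$ by $d_{\tilde x}$--balls of radius $\epsilon$ is bounded by some $N(\epsilon)$ independent of $\tilde x$; your shadow bound then gives $\norm{\mu_{\tilde x}}\leq N(\epsilon)C\epsilon^{\delta}$ directly. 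So the step you flag as ``the main technical obstacle'' is in fact circular. Moreover, the justification you sketch for it is incorrect on its own terms: a uniform lower bound on the injectivity radius rules out parabolics, but it does \emph{not} force every limit point to be conical (think of the fiber subgroup of a fibered hyperbolic $3$--manifold, which covers a closed manifold yet is geometrically infinite with many non-conical limit points). Finally, your key lower bound $\tilde\nu(T^1X|_{B(\tilde x_n,R)})\geq c(R)\norm{\mu_{x_n}}^{2}$ actually contradicts what the hypotheses give you: from $\nu(T^1M)<\infty$ and the injectivity radius bound one obtains by a direct counting argument that $\tilde\nu(T^1X|_{B(\tilde x,R)})\leq Ce^{CR}$ uniformly in $\tilde x$, so the left side stays bounded even if $\norm{\mu_{x_n}}\to\infty$.

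The paper proceeds quite differently. It first uses the finiteness of $\nu$ together with the injectivity radius bound to obtain exactly the uniform upper bound $\nu(T^1 B(x,R))\leq Ce^{CR}$ just mentioned (a condition it calls \emph{uniformly exponentially bounded}). This is then fed into a gradient--flow argument for $f(x)=\norm{\mu_x}$: the exponential bound forces, at any point where $\norm{\mu_x}$ is large, the existence of some direction $\xi$ whose shadow carries very little $\mu_x$--mass, and hence $\nabla f$ is nearly of maximal size $\delta f$. An ODE comparison then shows that along the gradient trajectory $\norm{\mu_{\Phi(t)}}\geq e^{\delta t}$, and tracking where the mass concentrates as $t\to\infty$ manufactures an atom of $\mu_{x_0}$, contradicting the non-atomicity that follows from $\nu(T^1M)<\infty$ (divergence type). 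In short, the hypotheses make the measures $\mu_{\tilde x}$ \emph{more} concentrated when $\norm{\mu_{\tilde x}}$ is large, not more spread out as your argument needs.
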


Finally, we obtain the following corollary. 

\begin{corollary}[Cohomological Vanishing]\label{cor:CDBound}
Suppose $X$ has pinched negative curvature and $M=X/\Gamma$ is a manifold whose injectivity radius is bounded away from zero. If $\nu(T^1M)<\infty$, then $\op{cd}(\Ga)\leq j_X(\mu)-1$, 
where $\mu$ is the unique $\delta(\Ga)$--conformal density associated to $\nu$. Furthermore, if $X$ is normalized to be $K\leq -1$, then $\op{cd}(\Ga)\leq \floor{\delta(\Ga)}+1$.
\end{corollary}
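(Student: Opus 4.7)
The plan is to chain together Theorem \ref{thm:finiteness criterion} and Theorem \ref{thm:cd}, which have already done all of the heavy lifting. First I would invoke Theorem \ref{thm:finiteness criterion}: under the assumptions that $X$ has pinched negative curvature, $M=X/\Gamma$ has injectivity radius bounded away from zero, and $\nu(T^1 M)<\infty$, the associated $\delta(\Gamma)$--conformal density $\mu$ is unique and $\|\mu_x\|$ is uniformly bounded on $M$.

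Next I would observe that uniform boundedness is a (very strong) special case of subexponential growth: if $\|\mu_x\|\leq C$ for all $x\in M$, then for any basepoint $p\in M$ and any $\eta>0$,
\[
\|\mu_x\|\leq C\leq C\,e^{\eta\cdot d(x,p)}.
\]
Thus $\|\mu_x\|$ has subexponential growth in the sense defined just before Theorem \ref{thm:cd}, so Theorem \ref{thm:cd} applies and yields $\op{cd}(\Gamma)\leq j_X(\mu)-1$, which is the first assertion.

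For the furthermore statement, I would apply the explicit bound on the critical index recorded in the paragraph following Theorem \ref{thm:cd}: when $X$ is pinched negatively curved, normalized so that the sectional curvatures satisfy $K\leq -1$, and $\mu$ is a conformal density of dimension $\delta(\Gamma)$, one has $j_X(\mu)\leq \lfloor\delta(\Gamma)\rfloor+2$. Substituting this into the inequality $\op{cd}(\Gamma)\leq j_X(\mu)-1$ gives $\op{cd}(\Gamma)\leq \lfloor\delta(\Gamma)\rfloor+1$, as claimed.

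There is no real obstacle: the corollary is a packaging result whose entire content is to combine the dynamical input (finiteness of the Bowen--Margulis measure forces boundedness of $\|\mu_x\|$) with the topological output of the Morse-theoretic argument behind Theorem \ref{thm:cd}. The only step requiring any comment at all is the elementary remark that uniform boundedness implies subexponential growth, and the final substitution uses only the already-stated estimate $j_X(\mu)\leq \lfloor\delta\rfloor+2$ in pinched curvature $K\leq -1$.
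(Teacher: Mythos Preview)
Your proposal is correct and matches the paper's own proof essentially verbatim: the paper simply says the first part follows from Theorem~\ref{thm:finiteness criterion} and Theorem~\ref{thm:cd}, and the second from the bound $j_X(\mu)\leq \floor{\delta(\Ga)}+2$ (referencing the proof of Corollary~\ref{T:MainJac}). Your explicit remark that uniform boundedness implies subexponential growth is the only step the paper leaves implicit.
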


\subsection*{Outline of Paper.}

We give a brief outline of the paper. In \S \ref{sec:gradient_flow} we present the natural flow and the proof of its contraction properties. We also present the proof of Theorems \ref{Thm:DeyKapConj} and \ref{T:MainIso} and Corollary \ref{Cor:Cheeger} as applications. In \S \ref{sec:homdim}, we provide background for the homological results and prove Theorem \ref{thm:main}. We then apply this theorem to the special case of rank one locally symmetric spaces. In \S \ref{sec:cohomdim} we establish \cref{thm:finiteness criterion} and the cohomological bounds in \cref{thm:cd} via Morse theory. Finally, in a short appendix, we give the proof of Theorems \ref{thm:small_vanishing}.

\subsection*{Acknowledgements.}
This work was partially supported by grants Simons Foundation/SFARI-965245 (CC) and NSF DMS-1812153 (DBM) and NSFC-12301085 (SW). The authors thank the Max Planck Institute of Mathematics-Bonn (CC and SW) and the Karlsruhe Institute of Technology (CC), where some of this work was completed. We would also like to thank David Fisher, Gabrielle Link, Beibei Liu, Ben Lowe, Hee Oh, Jean-Fran\c{c}ois Quint, Roman Sauer, and Bingyu Zhang for helpful conversations on the material in this paper.

\section{The natural flow}\label{sec:gradient_flow}

We will assume $X$ is a Hadamard space of sectional curvature $-a^2\leq K\leq 0$ with metric $d$ and $\Ga<\op{Isom}(X)$ is a discrete, torsion-free subgroup. Let $\partial_\infty X$ denote the geodesic boundary of $X$. The \textbf{Busemann} function on $X$ is defined by
\[B_\theta(x,y)=\lim_{t\rightarrow \infty}\left(d(x,c_{y\theta}(t))-t\right),\quad \forall x,y\in X,\;\forall \theta\in \partial_\infty X,\]
where $c_{y\theta}\colon \R^+\rightarrow X$ is the geodesic ray connecting $y$ to $\theta$. When a base point ${p}\in X$ is fixed, we simply denote $B_\theta(x)=B_\theta(x,{p})$. For a Hadamard space $X$, it is known \cite{Heintze-Im-Hof} that for each fixed $\theta\in \partial_\infty X$, the Busemann function $B_\theta(x)$ is convex and $C^2$ on $X$.

\begin{definition}\label{def:delta_conf_density}
Given $\delta> 0$, we say a family of finite nonzero Borel measures $\{\mu_x:x\in X\}$ is a \textbf{$\delta$--conformal density on $X$} (associated to the $\Gamma$--action) if
\begin{enumerate}
\item each $\mu_x$ is supported on $\supp{\mu_x}\subset\partial_\infty X$,
\item $\mu_x$ is $\Ga$--equivariant, that is, $\mu_{\ga x}=\ga_*\mu_x$ for all $x\in X$ and $\ga\in \Ga$,
\item the measures $\mu_x$ are pairwise equivalent with Radon--Nikodym derivatives given by
\[\frac{d\mu_x}{d\mu_y}(\theta)=e^{-\delta B_\theta(x,y)}\]
for all $x,y\in X$ and $\theta\in \supp{\mu_x}$.
\end{enumerate}
\end{definition}

Recall the \textbf{critical exponent of $\Ga$} is defined to be
\[\delta(\Ga):=\inf\{s\,:\,\sum_{\ga\in \Ga}e^{-sd({p},\ga {p})}<\infty\},\]
where ${p}\in X$ is any chosen basepoint. By Patterson's construction \cite{Patterson76} (see \cite[Lemma 2.2]{Knieper97}), there always exists a $\delta(\Ga)$--conformal density on the Hadamard space $X$ provided the discrete group $\Gamma<\op{Isom}(X)$ has $\delta(\Gamma)>0$. 

\subsection{The Natural flow}\label{sec:natural flow}

Given a $\delta$--conformal density $\{\mu_x\}_{x\in X}$ on $X$, we define $\overline f\colon X\rightarrow \mathbb R$ given by $\overline f(x)=-\log \norm{\mu_x}$, where $\norm{\mu_x}$ denotes the total mass of $\mu_x$ defined by
\[\norm{\mu_x}=\int_{\partial_\infty X}d\mu_x=\int_{\partial_\infty X}e^{-\delta B_\theta(x)}d\mu_{p}(\theta).\]
It follows from dominated convergence that $\overline f$ is also $C^2$. As $\mu_x$ is equivariant, $\bar{f}$ is $\Ga$--invariant, and thus descends to a $C^2$--function on $M$ which we denote by $f$.

Now we consider the gradient vector field on $M$ given by $\mathfrak X:=\nabla f$ which is also the quotient field of the $\Ga$--invariant gradient vector field $\nabla \overline f$ on $X$. A direct computation shows that
\begin{equation}\label{eq:df}
d{\overline f}(x)=\frac{\delta\cdot \int_{\pa_\infty X}dB_\theta(x) d\mu_x(\theta)}{\norm{\mu_x}},
\end{equation}
or equivalently, 
\[\nabla {\overline f}(x)=\frac{\delta\cdot \int_{\pa_\infty X}\nabla B_\theta(x) d\mu_x(\theta)}{\norm{\mu_x}}.\]
Additionally, we have
\begin{align}\label{eq:Hessian f}
\nabla d{\overline f}(x)&=\frac{\delta\cdot \int_{\pa_\infty X}\left( \nabla dB_\theta-\delta\cdot dB_\theta\otimes dB_\theta\right)(x) d\mu_x(\theta)}{\norm{\mu_x}}+\left(d{\overline f}\otimes d{\overline f}\right)(x).
\end{align}
In what follows, we abuse notation and do not distinguish between $f$ with $\overline f$, and $x$ with its lift since $\overline f$ is $\Ga$--invariant.

\begin{lemma}
The gradient vector field $\mathfrak X$ is complete and $C^1$--smooth.
\end{lemma}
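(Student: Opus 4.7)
The plan has two parts, matching the two assertions of the lemma.

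For $C^1$-smoothness, the strategy is to differentiate under the integral sign in the defining formula $\|\mu_x\| = \int_{\partial_\infty X} e^{-\delta B_\theta(x)}\, d\mu_p(\theta)$. Since $X$ is a Hadamard manifold, the Busemann function $B_\theta(\cdot)$ is $C^2$ (as recalled just before the definition of conformal density), with derivatives $dB_\theta$ and $\nabla dB_\theta$ bounded uniformly in $\theta$ (in particular $|dB_\theta|\equiv 1$, and $\nabla dB_\theta$ is controlled by the curvature bound $-a^2 \leq K \leq 0$ via Rauch/Heintze--Im Hof comparison). Together with the fact that $\mu_p$ is a finite Borel measure on $\partial_\infty X$, dominated convergence legitimizes two successive differentiations of $\|\mu_x\|$ under the integral, yielding the formulas already written for $d\bar f$ and $\nabla d\bar f$. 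This gives $\bar f \in C^2(X)$, and hence $\mathfrak X = \nabla f \in C^1(M)$ after passing to the $\Gamma$-invariant quotient.

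For completeness, the strategy is to bound $|\mathfrak X|$ uniformly on $M$ and invoke the standard ODE fact that a $C^1$ vector field of bounded norm on a (geodesically) complete Riemannian manifold has complete flow. Starting from
\[
\nabla \bar f(x) \;=\; \frac{\delta \int_{\partial_\infty X} \nabla B_\theta(x)\, d\mu_x(\theta)}{\|\mu_x\|},
\]
and using $|\nabla B_\theta|\equiv 1$, the triangle inequality for vector-valued integrals yields
\[
|\mathfrak X(x)| \;=\; |\nabla \bar f(x)| \;\leq\; \frac{\delta \int_{\partial_\infty X} |\nabla B_\theta(x)|\, d\mu_x(\theta)}{\|\mu_x\|} \;=\; \delta.
\]
Since $X$ is Hadamard it is geodesically complete, and the quotient $M = X/\Gamma$ inherits completeness from the properly discontinuous isometric $\Gamma$-action; thus $M$ is complete. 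A uniformly bounded $C^1$ vector field on a complete Riemannian manifold cannot make an integral curve exit every compact set in finite time (the curve has speed at most $\delta$, so any maximal integral curve defined on a finite interval remains in a metric ball, whose closure is compact by Hopf--Rinow, contradicting maximality). Hence the flow $\phi_t$ is defined for all $t \in \mathbb{R}$.

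The only subtlety worth flagging is the verification that differentiation under the integral applies to $\nabla dB_\theta$ in the formula for $\nabla d\bar f$; this is where the curvature bound $-a^2 \leq K \leq 0$ is used to produce a $\theta$-uniform upper bound on $\nabla dB_\theta$ so that dominated convergence applies. Everything else is routine: the norm bound $|\mathfrak X|\leq \delta$ is immediate from $|\nabla B_\theta|=1$, and the completeness upgrade from a bounded $C^1$ field on a complete manifold is standard.
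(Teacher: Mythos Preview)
Your proof is correct and follows essentially the same approach as the paper: the $C^1$--smoothness comes from the $C^2$--regularity of the Busemann function (with dominated convergence justifying the formulas for $d\bar f$ and $\nabla d\bar f$), and completeness follows from the uniform bound $\norm{\mathfrak X}\leq \delta$ obtained via $\norm{\nabla B_\theta}\equiv 1$. The paper's proof is terser but the content is the same.
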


\begin{proof}
As the Busemann function is $C^2$, $\mathfrak X$ is $C^1$--smooth. Since $M$ is complete, it suffices to show the flow does not blow up in finite time $t$. However, finite time blow-up cannot occur because the norm of $\mathfrak X$ is uniformly bounded. Indeed, for any $x\in M$,
\[\norm{\mathfrak X}(x)=\norm{\nabla f}(x)\leq \frac{\delta\cdot \int_{\pa_\infty X}\norm{\nabla B_\theta}(x) d\mu_x(\theta)}{\norm{\mu_x}}= \delta.\]
\end{proof}

We define a global gradient $C^1$--flow $\phi\colon \R\times M\rightarrow M$ along integral curves of $\mathfrak X$. For $t\in \R$, we denote $\phi_t(\cdot):=\phi(t,\cdot)$ and recall that $\phi_t$ is a $C^1$--self-diffeomorphism of $M$ that is homotopic to the identity. Note that if $z$ is a singular point of $\mf{X}$ (i.e. $\mf{X}(z)=0$), then $\phi_t(z)=z$ for all $t\in \R$. From equation \eqref{eq:df} we see that the set of singular points (or the fixed point set of the flow) coincides exactly with the fixed point set of the natural map constructed by Besson--Courtois--Gallot. If $z$ is a regular point (i.e. $\mf{X}(z)\neq0$), then $\phi_t(z)$ is a regular point for all $t\in \R$.

\subsection{Proof of Theorem \ref{prop:phi_contracts}} 

\begin{definition}
Let $M,N$ be smooth Riemannian manifolds,  $F\colon M\rightarrow N$ a $C^1$--smooth map, and $0 < k\leq \min\{\dim(M), \dim(N)\}$. We define the \textbf{$k$--Jacobian} function as
\[\op{Jac}_k F(x):=\sup \norm{dF(e_1)\wedge...\wedge dF(e_k)},\]
where the supremum is taken over all orthonormal $k$--frame $\{e_1,...,e_k\}\subset T_xM$, and the norm is taken with respect to the Riemannian metric of $N$ at $F(x)$.
\end{definition}

\begin{definition}\label{def:k-trace} 
Let $A$ be an $n$--dimensional symmetric bilinear form, and $\lambda_1\leq...\leq \lambda_n$ be its eigenvalues with respect to the given metric. For each integer $1\leq k\leq n$, we define the \textbf{$k$--trace of $A$} to be
\[ \op{tr}_k(A)=\sum_{i=1}^k \lambda_i, \]
or equivalently, $\op{tr}_k(A)=\inf\{\op{tr}(A|_{V^k}):V^k \text{ is any $k$--dimensional subspace}\}$. 
\end{definition}

We observe that $\op{tr}_k$ is superadditive on symmetric forms:  $\op{tr}_k(A+B)\geq \tr_k(A)+\tr_k(B)$.

\begin{lemma}\label{lem:volume_rate_trace}
Suppose $\ga\colon \R\rightarrow M$ is an integral curve based at $p=\ga(0)$, and that $Y_1,...,Y_k\in T_pM$ is any $k$--frame at p. If $Y_i(t):=(\phi_t)_*(Y_i)\in T_{\ga(t)}M$ and $V(t)\subset T_{\ga(t)}M$ is the subspace spanned by $\{Y_i(t)\}_{i=1}^k$, then $\norm{Y_1(t)\wedge...\wedge Y_k(t)}'=\left(\op{tr} \nabla d f|_{V(t)}\right)\norm{Y_1(t)\wedge...\wedge Y_k(t)}$.
\end{lemma}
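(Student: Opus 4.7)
The plan is to pass to the squared norm and work with the Gram matrix. Set $G_{ij}(t)=\langle Y_i(t),Y_j(t)\rangle$, so that
$$\|Y_1(t)\wedge\cdots\wedge Y_k(t)\|^2=\det G(t).$$
The proof reduces to two ingredients: (a) a formula for $G'(t)$ in terms of $\nabla df$, and (b) Jacobi's formula $\tfrac{d}{dt}\log\det G(t)=\operatorname{tr}(G(t)^{-1}G'(t))$, combined with the identification of this trace with the intrinsic trace of $\nabla df$ restricted to $V(t)$. Throughout I may assume $\{Y_1,\dots,Y_k\}$ is linearly independent, since if not both sides vanish identically (because $\phi_t$ is a diffeomorphism, dependence is preserved).

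For step (a), I would use the flow-theoretic definition of the Lie derivative. By construction $Y_i(t)=(\phi_t)_*Y_i$ extends, near $\gamma(t)$, to a local vector field $\tilde Y_i$ which is invariant under the flow of $\mathfrak X=\nabla f$; hence $[\mathfrak X,\tilde Y_i]=0$ along $\gamma$. Applying the standard Lie-derivative identity for the metric,
$$\tfrac{d}{dt}\langle Y_i(t),Y_j(t)\rangle=\mathfrak X\langle\tilde Y_i,\tilde Y_j\rangle=(\mathcal L_{\mathfrak X}g)(Y_i(t),Y_j(t)),$$
and invoking the well-known identity $\mathcal L_{\nabla f}g=2\,\nabla df$, I obtain
$$G'(t)_{ij}=2\,\nabla df(Y_i(t),Y_j(t)).$$

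For step (b), let $H(t)_{ij}=\nabla df(Y_i(t),Y_j(t))$. Jacobi's formula gives $\tfrac{d}{dt}\log\det G(t)=\operatorname{tr}(G(t)^{-1}\cdot 2H(t))$. It remains to identify $\operatorname{tr}(G^{-1}H)$ with the intrinsic quantity $\operatorname{tr}(\nabla df|_{V(t)})$. This is a basis-independent fact: if a symmetric bilinear form $\beta$ on $(V,\langle\cdot,\cdot\rangle)$ corresponds to the self-adjoint endomorphism $B$ via $\beta(u,v)=\langle Bu,v\rangle$, then in any basis $\{Y_i\}$ with Gram matrix $G$ and matrix $\beta_{ij}=\beta(Y_i,Y_j)$, one has $\operatorname{tr} B=\operatorname{tr}(G^{-1}\beta_{ij})$. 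One can verify this directly by choosing an orthonormal basis of $V(t)$ and computing the change-of-basis explicitly, or simply note both sides are invariants of the pair $(\beta,\langle\cdot,\cdot\rangle)$ that agree on orthonormal bases.

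Putting these together yields $\tfrac{d}{dt}\log\|Y_1(t)\wedge\cdots\wedge Y_k(t)\|^2=2\operatorname{tr}(\nabla df|_{V(t)})$, which rearranges to the claimed identity. I do not expect any serious obstacle here: the only subtle point is the basis-change identity in step (b), and the main conceptual content is the Lie-derivative computation in step (a), which is completely standard once one recognizes that pushforwards under the flow are $\mathfrak X$-invariant vector fields.
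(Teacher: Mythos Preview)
Your proof is correct. Both your argument and the paper's share the same conceptual core: extend the $Y_i$ to local vector fields that are flow-invariant (equivalently, $[\mathfrak X,Y_i]=0$), so that differentiating along $\gamma$ brings in $\nabla_{Y_i}\mathfrak X=\nabla_{Y_i}\nabla f$ and hence the Hessian $\nabla df$.

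The computational packaging differs. The paper works with the norm itself: it introduces a moving orthonormal frame $\{e_i(t)\}$ on $V(t)$, evaluates $Y_1^*\wedge\cdots\wedge Y_k^*$ against it, and differentiates the resulting scalar. The terms involving $\nabla_{\mathfrak X}e_i$ drop out because $\langle\nabla_{\mathfrak X}e_i,e_i\rangle=0$, and the remaining sum of ``diagonal'' coefficients is recognized as $\operatorname{tr}(p\circ\nabla\mathfrak X|_{V(t)})=\operatorname{tr}(\nabla df|_{V(t)})$. Your route via the Gram determinant and Jacobi's formula is more algebraic and arguably cleaner: it avoids the auxiliary orthonormal frame and the wedge-product bookkeeping, replacing them with the single identity $\mathcal L_{\nabla f}g=2\,\nabla df$ and the basis-change fact $\operatorname{tr}(G^{-1}H)=\operatorname{tr}(\nabla df|_V)$. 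The paper's approach has the minor advantage of giving the unsquared derivative directly, but nothing is gained or lost mathematically.
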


\begin{proof} 
Pick an orthonormal $k$--frame $\{e_i(t): 1\leq i\leq k\}$ on each $V(t)$ so that initially it has the same orientation with $\{Y_1,...,Y_k\}$. Extend $Y_i$ on a neighborhood of $\ga(t)$ such that $[Y_i, \mathfrak X]=0$ for all $i$, this can be done for example by first choosing curves $\eta_i(s)$ through $p$ tangent to $Y_i$ and then pushing forward by the diffeomorphism $\phi_t$, the resulting vector field $(\phi_t)_*(\eta_i'(s))$ will then be an extension of $Y_i$ which commutes with $\mathfrak X$. Now we compute
\begin{align*}
&\norm{Y_1(t)\wedge...\wedge Y_k(t)}'=\Big(Y_1^*(t)\wedge...\wedge Y_k^*(t)(e_1(t),...,e_k(t))\Big)'\\
&=\Big(\nabla_{\mathfrak X}(Y_1\wedge...\wedge Y_k)\Big)^*(e_1,...,e_k)+\sum_{i=1}^k \Big(Y_1^*\wedge...\wedge Y_k^*(e_1,...,\nabla_{\mathfrak X} e_i,...,e_k)\Big)\\
&=\sum_{i=1}^k\Big(Y_1\wedge...\wedge \nabla_{\mathfrak X} Y_i\wedge...\wedge Y_k\Big)^*(e_1,...,e_k).
\end{align*}
In the above equations $*$ indicates the dual tensor with respect to the Riemannian inner product and note that $\nabla_\mathfrak X Y_i^*=(\nabla_\mathfrak X Y_i)^*$, and the last equality follows from the fact that $\langle\nabla_\mathfrak X e_i,e_i\rangle=0$. Since $[Y_i, \mathfrak X]=0$, we have $\nabla_\mathfrak X Y_i=\nabla _{Y_i} \mathfrak X$. Now if we denote $a_i(t)$ the $i$-th component in the basis $\{Y_i(t)\}_{i=1}^k$ of $V(t)$, then the last expression equals
\[\left(\sum_{i=1}^k a_i\right)\norm{Y_1\wedge...\wedge Y_k}(t).\]
If we denote $p(t)$ the projection from $T_{\ga(t)} M$ to $V(t)$, then $\sum_{i=1}^k a_i$ is exactly $\tr(p\circ \nabla \mathfrak X|_{V(t)})$, or equivalently, when replacing $\nabla \mathfrak X$ by the corresponding symmetric bilinear form $\nabla df$, it is the trace after restriction to $V(t)$. Hence the lemma follows.
\end{proof}

\begin{proof}[Proof of Theorem \ref{prop:phi_contracts}] 
For any $x\in M$ and $t_0>0$, denote $\{Y_1,...,Y_k\}\subset T_x M$ any orthonormal $k$--frame that spans a $k$--dimensional subspace $V\subset T_x M$. Apply Lemma \ref{lem:volume_rate_trace} and using the same notation there, we have for all $t\in [0,t_0]$.
\[\norm{Y_1(t)\wedge...\wedge Y_k(t)}'\geq  \epsilon_k\norm{Y_1(t)\wedge...\wedge Y_k(t)}.\]
Thus by the Gr\"onwall's inequality, we have
\[\norm{Y_1(t_0)\wedge...\wedge Y_k(t_0)}\geq e^{\epsilon_k t_0}\norm{Y_1(0)\wedge...\wedge Y_k(0)}=e^{\epsilon_k t_0}.\]
Thus, at the point $\phi_{t_0}(x)$, the map $\phi_{-t_0}$ (which is the inverse of $\phi_{t_0}$) satisfies $\op{Jac}_{k}\phi_{-t_0}\leq e^{-\epsilon_k t_0}$. Since $t_0$, $x$ are arbitrary, the theorem follows.
\end{proof}

\subsection{Proof of Corollary \ref{T:MainJac}}

We now prove Corollary \ref{T:MainJac}.

\begin{proof}[Proof of Corollary \ref{T:MainJac}]
We are restricting Theorem \ref{prop:phi_contracts} to manifolds whose sectional curvature is bounded above by $-1$ and setting $f(x)=-\log||\mu_x||$ as in \S \ref{sec:natural flow}. For each $x\in X$ and $\theta\in \partial_\infty X$, we choose the orthonormal frame $\{e_1,...,e_n\}$ on $T_xX$ such that $e_1=v_{x\theta}$ and $e_2,...,e_n$ be the eigenvectors corresponding to the second fundamental form of the horosphere passing through $x$ and $\theta$. Then the frame diagonalizes both $\nabla dB_\theta(x)$ and $dB_\theta\otimes dB_\theta (x)$, and it follows that $\left(\nabla dB_\theta-\delta\cdot dB_\theta\otimes dB_\theta\right)(x)$ equals $\op{diag}(-\delta, \kappa_1,..., \kappa_{n-1})$ where $\kappa_1, ..., \kappa_{n-1}$ are the principal curvatures of the horosphere, and under the curvature assumption of $X$, they are at least $1$. Therefore, in view of inequality \eqref{eq:Hessian f} and the fact that $df\otimes df$ is positive semi-definite, we have by the superadditivity of $\tr_k$ that
\begin{align*}
\tr_k\nabla df(x)&\geq \delta\cdot \int_{\partial_\infty X}\tr_k\left(\nabla dB_\theta -\delta\cdot dB_\theta \otimes dB_\theta \right)(x)\frac{d\mu_x}{||\mu_x||}\\
&\geq \delta (-\delta +k-1).
\end{align*}
The corollary then follows from Theorem \ref{prop:phi_contracts}.
\end{proof}

\subsection{Proof of Theorem \ref{Thm:DeyKapConj}} 

The proof is inspired by \cite[Prop 21]{Dey-Kapovich}. The improvement arises from using the natural flow instead of the natural map. The key observation is that under the assumption $\delta<2k$, the natural flow $\phi_{-t}$  infinitesimally contracts every holomorphic $k$--plane in the tangent bundle of $M$.

\begin{lem}\label{lem:holomorphic_contract}
Let $X = \mathbf{H}_\C^n$ be complex hyperbolic $n$--space. Following the same notations in \S \ref{sec:gradient_flow}, if $x\in M$, and $V\subset T_xM$ is any holomorphic $k$--plane (i.e. $J(V)=V$), then $\tr\nabla d f|_V \geq \delta(2k-\delta)$.
\end{lem}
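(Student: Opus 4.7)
The plan is to combine the formula \eqref{eq:Hessian f} for $\nabla df$ with the explicit horosphere geometry of $\mathbf{H}_\C^n$ and the $J$-invariance of $V$. Fix $\theta \in \partial_\infty X$ and set
\[
A_\theta := (\nabla dB_\theta - \delta\, dB_\theta \otimes dB_\theta)(x).
\]
As in the proof of Corollary \ref{T:MainJac}, I would work in the adapted orthonormal frame $\{e_1 = v_{x\theta},\, Je_1,\, f_1, \ldots, f_{2n-2}\}$ at $x$, where $f_1, \ldots, f_{2n-2}$ span the complex-orthogonal complement of $\op{span}\{e_1, Je_1\}$. In complex hyperbolic $n$-space normalized so that $-4 \leq K \leq -1$, the horospheres have principal curvatures $2$ in the $Je_1$ direction and $1$ in each $f_i$ direction, so $A_\theta$ is diagonal in this frame with eigenvalues $(-\delta,\, 2,\, 1, \ldots, 1)$.

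The main step is to restrict $A_\theta$ to $V$ using $J(V) = V$. Since $J$ is an isometry it also preserves $V^\perp$, and therefore the orthogonal projection $\pi_V$ commutes with $J$; in particular, $\|\pi_V(Je_1)\|^2 = \|J\pi_V(e_1)\|^2 = \|\pi_V(e_1)\|^2 =: \alpha_\theta \in [0,1]$. Expanding the trace of $A_\theta|_V$ in any orthonormal basis of $V$ (of real dimension $2k$) in terms of the coordinates $\langle v_j, e_1\rangle$, $\langle v_j, Je_1\rangle$, and $\langle v_j, f_i\rangle$, and summing the resulting sums of squared components, one obtains
\[
\tr(A_\theta|_V) = -\delta\, \alpha_\theta + 2\, \alpha_\theta + (2k - 2\alpha_\theta) = 2k - \delta\, \alpha_\theta \geq 2k - \delta.
\]
This algebraic identity is the heart of the argument; it is where holomorphicity of $V$ enters essentially, since without $\pi_V \circ J = J \circ \pi_V$ the negative contribution $-\delta\, \|\pi_V(e_1)\|^2$ coming from the $e_1$-eigenvalue need not be compensated by the $Je_1$-term. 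The extra factor of $2$ in $2k-\delta$, compared with the $k-1-\delta$ of Corollary \ref{T:MainJac}, arises both from the $Je_1$-eigenvalue being $2$ (rather than $1$) and from the identity $\|\pi_V(Je_1)\| = \|\pi_V(e_1)\|$.

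Inserting this pointwise lower bound into \eqref{eq:Hessian f} and dropping the positive semi-definite term $df \otimes df$ yields
\[
\tr \nabla df|_V \geq \delta \int_{\partial_\infty X}\tr(A_\theta|_V)\, \frac{d\mu_x(\theta)}{\|\mu_x\|} \geq \delta(2k - \delta),
\]
which is the desired bound. The only non-routine ingredients are the identification of the horosphere principal curvatures in $\mathbf{H}_\C^n$ and the verification that $\pi_V$ commutes with $J$; everything else reduces to elementary linear algebra.
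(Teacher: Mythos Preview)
Your proof is correct and follows essentially the same approach as the paper. The only packaging difference is that the paper exploits $J(V)=V$ via the symmetrization identity $2\tr L_\theta|_V=\tr(L_\theta+J^{-1}L_\theta J)|_V$ and then bounds below by $\tr_{2k}$ of the resulting diagonal form $\op{diag}(2-\delta,2-\delta,2,\dots,2)$, whereas you use the equivalent fact $\pi_V\circ J=J\circ\pi_V$ to compute $\tr(A_\theta|_V)=2k-\delta\,\alpha_\theta$ explicitly before bounding; the underlying linear algebra is the same.
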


\begin{proof}
According equation \eqref{eq:Hessian f}, and note $df\otimes df$ is positive semi-definite, we have
\begin{align}\label{eq:1}
\tr\nabla d f|_V\geq \delta\cdot \int_{\pa_\infty X}\tr\left( \nabla dB_\theta-\delta\cdot dB_\theta\otimes dB_\theta\right)|_{V}d\overline{\mu}_x(\theta),
\end{align}
where $d\overline{\mu}_x$ is the normalized Patterson--Sullivan measure at $x$. If we denote the symmetric bilinear forms $K_\theta=\nabla dB_\theta$, $H_\theta=dB_\theta\otimes dB_\theta$ and $L_\theta=K_\theta-\delta H_\theta$, then in complex hyperbolic spaces, we have $K_\theta+J^{-1}K_\theta J=2I$, and $H_\theta+J^{-1}H_\theta J=\op{diag}(1, 1, 0, \cdots, 0)$, after a suitable choice of orthonormal basis on $T_xM$ such that $e_1$ is in the direction $v_{x\theta}$ and $e_2=Je_1$. Thus, we have $L_\theta+J^{-1}L_\theta J=\op{diag}(2-\delta, 2-\delta, 2, \cdots, 2)$ under the same choice of orthonormal basis. On the other hand, we observe from $J(V)=V$  so that
\begin{align*}
2\tr L_\theta|_V&=\tr L_\theta|_V+\tr L_\theta|_{J(V)} =\tr (L_\theta + J^{-1}L_\theta J)|_V\\
&\geq \tr_{2k}(L_\theta + J^{-1}L_\theta J) =2(2k-\delta)
\end{align*}
The lemma then follows from inequality \eqref{eq:1}.
\end{proof}

\begin{proof}[Proof of Theorem \ref{Thm:DeyKapConj}]
Assume there exists a compact, complex subvariety $N\subset M$ of complex dimension $k$. As $\delta<2k$, by Lemma \ref{lem:volume_rate_trace} and Lemma \ref{lem:holomorphic_contract}, the flow $\phi_{-t}$ contracts almost all tangent planes (except possibly at singular points) of $N$ for sufficiently small $t>0$. If $\omega$ is the K\"ahler form on $M$, then $\frac{\omega^k}{k!}$ is a calibration of $N$ due to the Wirtinger's inequality \cite[\S 1.8.2]{Federer}. Hence $N$ is a volume minimizer in its homology class which contradicts that the flow $\phi_{-t}$ contracts.
\end{proof}

\begin{remark}
In \cite{Berger}, Berger showed the same minimizing fact for every quaternionic subvariety of a quaternionic K\"ahler manifold. Hence a similar theorem should hold for $X=\mathbf{H}_{\mathbb H}^n$.
\end{remark}

\subsection{Proof of Theorem \ref{T:MainIso} and Corollary \ref{Cor:Cheeger}} 

\begin{proof}[Proof of Theorem \ref{T:MainIso}]
Let $\mathcal A$ be the set of all compact smoothly immersed $(k+1)$--submanifolds whose boundary is $B$. Set $A_0=\inf\{\vol_{k+1}(\Omega):\Omega\in \mathcal A\}$. If $k+1=n$, then $A_0$ is simply achieved by a compact smooth region $\Omega_0$ that $B$ encloses. If $k+1<n$, then for any $\epsilon>0$, there exists $\Omega_0\in \mathcal A$ such that $\vol_{k+1}(\Omega_0)< A_0+\epsilon$. We apply our natural flow $\phi_{-t}:=F(t)$ on $\Omega_0$. Denote $\Omega_t=F_t(\Omega_0)$ and $B_t=F_t(B)$. Then for each $t_0>0$, the image 
\[ I_{t_0}=\bigcup_{0\leq t\leq t_0}F_t(B) \] forms a smooth (possibly degenerate) $(k+1)$--submanifold whose boundary is $B\cup B_{t_0}$. If we glue $I_{t_0}$ with $\Omega_{t_0}$ along the common boundary $B_{t_0}$, then we obtain a piecewise smooth submanifold in $M$, which after a small perturbation from the standard smoothing procedure, lies in $\mathcal A$, provided $k+1<n$. Thus, 
\begin{equation}\label{eq:triangle}
\vol_{k+1}(I_{t_0})+\vol_{k+1}(\Omega_{t_0})\geq A_0-\epsilon
\end{equation}
assuming the perturbation is small. In the case $k+1=n$, we simply have 
\[\vol_{n}(I_{t_0})+\vol_{n}(\Omega_{t_0})\geq \vol_n(\Omega_0)= A_0\]
hence \eqref{eq:triangle} also holds.
By \cref{T:MainJac}, we have 
\begin{equation}\label{eq:contracting}
\vol_{k+1}(\Omega_{t_0})\leq e^{-\delta(k-\delta)t_0}\vol_{k+1}(\Omega_0)\leq e^{-\delta(k-\delta)t_0}(A_0+\epsilon).
\end{equation}
Next, we estimate $\vol_{k+1}(I_{t_0})$. Let $G\colon B\times [0,t_0]\rightarrow M$ be the smooth map given by $G(x,t)=F_t(x)$. Then we can estimate the partial derivatives
\[||\frac{\partial G}{\partial t}||\leq ||\mathfrak X||\leq \delta,\]
and that 
\[||\frac{\partial G}{\partial x}||\leq ||\Jac_{k} F_{t_0}|| \leq e^{-\delta(k-1-\delta)t_0}.\]
Thus we can estimate
\begin{align*}
\vol_{k+1}(I_{t_0})&=\int_{B\times [0,t_0]}\Jac G\; dx dt \leq \int_{B\times [0,t_0]}||\frac{\partial G}{\partial x}\wedge \frac{\partial G}{\partial t}||\; dx dt\\
&\leq \int_{B\times [0,t_0]} e^{-\delta(k-1-\delta)t_0}\cdot \delta \;dxdt =e^{-\delta(k-1-\delta)t_0}\cdot \delta \cdot t_0\cdot \vol_k(B)
\end{align*}
Now combining with inequalities \eqref{eq:triangle} and \eqref{eq:contracting}, we have
\[(1-e^{-\delta(k-\delta)t_0})A_0\leq \epsilon+\epsilon \cdot e^{-\delta(k-\delta)t_0}+e^{-\delta(k-1-\delta)t_0}\cdot \delta\cdot t_0\cdot \vol_k(B).\]
Since $t_0$ and $\epsilon$ are arbitrary, by setting $t_0=\sqrt \epsilon$ and $\epsilon\rightarrow 0^+$, we obtain
\[A_0\leq \frac{1}{k-\delta}\vol_k(B).\]
The theorem then follows from the definition of $A_0$.
\end{proof}

Given a closed Riemannian manifold $M$ of dimension $n$, the \textbf{Cheeger constant} is defined as
\[h(M)=\inf_{A\subset M} \frac{\vol_{n-1}(\partial A)}{\vol_n(A)},\]
where the infimum is taking over all smooth regions $A\subset M$ (with $\partial A\subset M$ a smooth hypersurface) with no more than half of the total volume $\vol_n(M)$. The definition also extends to infinite volume complete manifolds, except that the infimum is now taking over all smooth region $A$ whose closure is compact. Cheeger \cite{Cheeger} proved that $h(M)\leq 2\sqrt \lambda_0$, where $\lambda_0$ is the infimum of the $L^2$-spectrum of the Laplacian on $M$. In \cite[Thm 7.1]{Buser}, Buser provides a lower bound (in the noncompact case)
\[h(M)\geq C(n)\cdot\frac{\lambda_0}{a},\]
where $-(n-1)\cdot a^2$ is the Ricci lower bound of $M$. If we apply our Theorem \ref{T:MainIso} to the case $k=n-1$, it gives a new explicit lower bound assuming a stronger curvature condition.

\begin{proof}[Proof of Corollary \ref{Cor:Cheeger}]
We may assume $\delta<n-1$, and in particular $M$ has infinite volume. According to the definition, for any $\eps>0$, there exists a compact smooth region $A\subset M$ with smooth boundary $\partial A$ whose volume satisfies
\begin{equation}\label{eq:cheeger1}
\frac{\vol_{n-1}(\partial A)}{\vol_n(A)}\leq h(M)+\eps.
\end{equation}
We notice that since $\partial A$ is a codimension one submanifold, $A$ is the unique compact region it encloses. If not, suppose $\partial A$ bounds another compact region $A'\subset M$. Take its opposite orientation and denote it by $A'_-$. Then after cancellation, $A\cup A'_-$ is a closed submanifold of $M$ of the same dimension. This is impossible since $M$ is connected and noncompact.
    
Thus, apply Theorem \ref{T:MainIso} to the case $k=n-1$, we have
\begin{equation}\label{eq:cheeger2}
\vol_{n}(A)\leq \frac{1}{n-1-\delta}\vol_{n-1}(\partial A)+\eps'
\end{equation}
for any $\eps'>0$. Since both $\eps, \eps'$ are arbitrary, we obtain from \eqref{eq:cheeger1} \eqref{eq:cheeger2} that $n-1-\delta\leq h(M)$.
\end{proof}

\begin{remark} 
Combining with Cheeger's inequality, we have
\[\lambda_0\geq \frac{(n-1-\delta)^2}{4}.\]
Consider the lower volume growth entropy of $M$, 
\[ h_M=\liminf \frac{\log \vol B(x,R)}{R} \] 
for any $x\in M$. (Observe that $h_M\leq h$ the volume growth entropy of $\til{M}$.) Combining the above corollary with Brooks' inequality \cite{Brooks}, we have $h_M\geq h(M)\geq (n-1-\delta)$.
\end{remark}
 
\section{Homological dimension}\label{sec:homdim}

\subsection{Homological and Cohomological dimension}\label{sec:hom_prelim} 


Given a commutative ring $R$ with identity, a group $\Gamma$ has \textbf{$R$--cohomological dimension $k$}, denoted $\op{cd}_R(\Gamma)=k$, if $k$ is the smallest integer for which there exists a resolution by projective $R \Gamma$--modules of the form 
\[ 0 \rightarrow P_k \rightarrow P_{k-1} \rightarrow \cdots \rightarrow P_0 \rightarrow R \rightarrow 0. \] 
Similarly, a group $\Gamma$ is said to have \textbf{$R$--homological dimension $k$} over $R$, denoted $\op{hd}_R(\Gamma)=k$, if $k$ is the smallest integer for which there exists a resolution by flat $R\Gamma$--modules of the form $0 \rightarrow F_k \rightarrow F_{k-1} \rightarrow \cdots \rightarrow F_0 \rightarrow R \rightarrow 0$. In other words, the (co)homological dimension of $\Gamma$ is just the (projective) flat dimension of the $R\Gamma$--module $R$. The following will be useful for us.

\begin{theorem}(\cite[Proposition 4.1 and \S 4.4]{Bieri76}). 
For a group $\Gamma$ and a commutative ring with identity, we have
\begin{align*}
&\operatorname{cd}_R(\Gamma)=\sup \left\{n: \text{ there exists an } R \Gamma\text{-module } V \text { such that } H^n(\Gamma ; V) \neq 0\right\}, \\
&\operatorname{hd}_R(\Gamma)=\sup \left\{n: \text{ there exists an } R\Gamma\text{-module } V \text { such that } H_n(\Gamma ; V) \neq 0\right\}.
\end{align*}
\end{theorem}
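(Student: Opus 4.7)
The plan is to identify the group (co)homology functors with the classical derived functors $\operatorname{Ext}^n_{R\Gamma}(R,-)$ and $\operatorname{Tor}_n^{R\Gamma}(R,-)$, and then to invoke the standard homological-algebra characterization of projective (respectively flat) dimension. Recall that for any $K(\Gamma,1)$ complex the cellular chains of its universal cover give a free $R\Gamma$-resolution of the trivial module $R$, yielding natural isomorphisms $H^n(\Gamma;V) \cong \operatorname{Ext}^n_{R\Gamma}(R,V)$ and $H_n(\Gamma;V) \cong \operatorname{Tor}_n^{R\Gamma}(R,V)$ for every $R\Gamma$-module $V$; equivalently one may use the bar resolution. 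With this identification the theorem reduces to the fact that the projective (flat) dimension of $R$ as an $R\Gamma$-module equals the supremum of degrees in which $\operatorname{Ext}$ (respectively $\operatorname{Tor}$) against an arbitrary second argument is nonzero.

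I would first dispose of the easy inequality. Set $d := \sup\{n : H^n(\Gamma;V) \neq 0 \text{ for some } V\}$. If $\operatorname{cd}_R(\Gamma)$ is finite and equals $k$, then any projective resolution of length $k$ computes $\operatorname{Ext}^n_{R\Gamma}(R,V)$ as the cohomology of a complex supported in degrees $0,\ldots,k$, forcing it to vanish for $n > k$. Hence $d \leq \operatorname{cd}_R(\Gamma)$, and the inequality is vacuous if $\operatorname{cd}_R(\Gamma) = \infty$.

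The substantive direction is $\operatorname{cd}_R(\Gamma) \leq d$, which I would establish by a dimension-shifting truncation. Assuming $d$ is finite (otherwise there is nothing to show), pick any projective resolution $\cdots \to P_1 \to P_0 \to R \to 0$ and let $K := \ker(P_{d-1} \to P_{d-2})$ denote the $d$-th syzygy, with the convention $P_{-1} := R$. Splitting the resolution into short exact sequences $0 \to \Omega_i \to P_i \to \Omega_{i-1} \to 0$ and iterating the associated long exact $\operatorname{Ext}$-sequences produces a natural isomorphism $\operatorname{Ext}^1_{R\Gamma}(K,V) \cong \operatorname{Ext}^{d+1}_{R\Gamma}(R,V)$, whose right-hand side vanishes for every $V$ by the definition of $d$. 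Since the vanishing of $\operatorname{Ext}^1_{R\Gamma}(K,-)$ on all modules characterizes $K$ as projective, the truncated sequence $0 \to K \to P_{d-1} \to \cdots \to P_0 \to R \to 0$ is an honest projective resolution of length $d$, giving $\operatorname{cd}_R(\Gamma) \leq d$.

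The homological statement is formally identical, with $\operatorname{Tor}$ and flat modules in place of $\operatorname{Ext}$ and projective modules, using the analogous characterization of flatness via vanishing of $\operatorname{Tor}_1^{R\Gamma}(-,V)$ for all $V$. The only step requiring care is the dimension-shifting isomorphism, which is the main technical ingredient; the rest is a formal manipulation of long exact sequences. I do not expect a serious obstacle, as this is a classical theorem of Bieri included here essentially to fix notation and to license the interchange between the resolution-theoretic definition of $\operatorname{cd}_R$ and $\operatorname{hd}_R$ used in this section and the cohomology-vanishing perspective used in the introduction.
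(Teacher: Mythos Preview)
The paper does not prove this statement; it is quoted from \cite{Bieri76} purely as background and is followed immediately by remarks rather than an argument. Your proposal is a correct rendition of the standard proof via dimension shifting, and since there is no proof in the paper to compare against, there is nothing further to add.
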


Note that as every projective module is flat, we always have $\op{cd}_R(\Gamma)\geq \op{hd}_R(\Gamma)$. Also, if $\mathbb Z$ (as a trivial $\mathbb Z\Ga$--module) has a projective resolution of length $k$, then tensoring with $R\Ga$ gives rise to a projective resolution (as $R\Ga$--modules) of $R$, whose length is at most $k$. Thus, we always have $\cd_R(\Gamma)\leq \cd(\Gamma)$ (recall $\cd(\Gamma)$ means $\cd_\Z(\Gamma)$).

For the statement and proofs of some of our results, we will need to both ``spacify" and ``relativize'' the above notions of (co)homology. Let $X$ be a CW--complex. For any sheaf $\bb{E}$ of modules on $X$ the sheaf cohomology $H^*(X,\bb{E})$ is defined to be the image of the corresponding morphism to abelian groups of the right derived functor of the functor $\bb{E}\to \Gamma(X,\bb{E})$ where $\Gamma(X,\bb{E})$ are global sections of $\bb{E}$. Consider a special case where $X$ be the universal cover of a CW--complex $\Omega$ and $\bb{E}$ is the product sheaf over $X$ with fibers $V_R$ where $V_R$ is the $R\Gamma$--module $V$ regarded as an $R$--module. We may think of the sheaf $\bb{E}$ as the sheaf of local (horizontal) sections of the product bundle $X \times V_R \rightarrow X$. In fact, we will identify this bundle with its sheaf of (horizontal) sections, and thus also denote it by $\bb{E}$. The group $\Gamma=\pi_1(\Omega)$ acts on this sheaf/bundle diagonally:
\[ \gamma \cdot(x, v)=(\gamma(x), \gamma \cdot v), \quad \gamma \in \Gamma. \]
The quotient under $\Gamma$ gives a bundle $\bb{V}\to \Omega$ which again can be identified with its sheaf of local horizontal sections and gives rise to the corresponding sheaf cohomology $H^*(\Omega,\bb{V})$. Moreover, for a subcomplex $C\subset \Omega$ we may define the relative sheaf cohomology of the pair $(\Omega,C)$  (see \cite{JohnsonMillson87}). We will denote this by $H^*(\Omega,C;\bb{V})$ for the sheaf $\bb{V}$ as above.
By taking the $\mathrm{Hom}$ functor on the associated cochain complex, we can also define the relative homology groups $H_*(\Omega,C;\bb{V})$.

Consider the special case when $\Omega=K(\Gamma,1)$ is the Eilenberg--MacLane space for $\Gamma$, and $C=\bigsqcup_i C_i$ where $\set{C_i}_{i\in I}$ is a disjoint family of embedded subcomplexes of $\Omega$ with each $C_i$ homeomorphic to an Eilenberg--Maclane space $K(\Pi_i,1)$ for a subgroup $\Pi_i<\Gamma$. By \cite[\S 1.5]{BieriEckmann78}, for the collection $\set{\Pi_i}_{i\in I}$ we have the following natural isomorphisms to the relative group (co)homology,
\[ H^*(\Gamma, \Pi ; V) \cong H^*(\Omega, C ; \mathbb{V}), \quad H_*(\Gamma, \Pi ; V) \cong H_*(\Omega, C ; \mathbb{V}). \]

In the case of $R=\mathbb{Z}$, we will omit the subscript from the notation for the (co)homological dimension. The relative homological and cohomological dimensions are tightly related.

\begin{remark}
Locally constant and locally free abelian sheaves can be identified with flat vector bundles, i.e. vector bundles carrying a connection whose associated curvature tensor vanishes, or equivalently, those whose structure group reduces to a discrete subgroup $\pi<\Gamma$. This follows from the fact that the classifying maps of flat bundles factor through the corresponding $K(\pi,1)$. We therefore follow the terminology of \cite{Kapovich} in calling $\mathbb{V}$ the flat bundle associated to the $R\Gamma$--module $V$. Notice also that this formulation is also equivalent to that of systems of local coefficients (see \cite[\S 4.3.4 and \S 5.3]{DavisKirk}).
\end{remark}

\begin{lemma}[{\cite[Lemma 2.8]{Kapovich}}]
For a group $\Gamma$ and collection of subgroups $\Pi$ as above,
\[ \mathrm{hd}_R(\Gamma, \Pi) \leq \mathrm{cd}_R(\Gamma, \Pi) \leq \mathrm{hd}_R(\Gamma, \Pi)+1. \]
\end{lemma}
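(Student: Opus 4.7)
The plan is to reduce the claim to the analogous comparison of projective and flat dimensions of a single $R\Gamma$--module and then invoke the classical Bieri--Eckmann argument. I would consider the augmentation short exact sequence of $R\Gamma$--modules
\[ 0 \to \Delta \to \bigoplus_{i \in I} R[\Gamma / \Pi_i] \to R \to 0, \]
with $\Delta$ the kernel of the augmentation sending each coset generator to $1$. Shapiro's lemma gives $\mathrm{Ext}^n_{R\Gamma}(R[\Gamma/\Pi_i], V) \cong H^n(\Pi_i; V)$ and $\mathrm{Tor}_n^{R\Gamma}(R[\Gamma/\Pi_i], V) \cong H_n(\Pi_i; V)$. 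Plugging these into the long exact sequences in $\mathrm{Ext}$ and $\mathrm{Tor}$ and comparing with the long exact sequence of the pair $(\Gamma, \Pi)$ produces natural isomorphisms $H^n(\Gamma, \Pi; V) \cong \mathrm{Ext}^{n-1}_{R\Gamma}(\Delta, V)$ and $H_n(\Gamma, \Pi; V) \cong \mathrm{Tor}_{n-1}^{R\Gamma}(\Delta, V)$ for $n \geq 2$. Combined with the Bieri characterization of (co)homological dimension via non-vanishing recalled just before the lemma, these identifications yield $\mathrm{cd}_R(\Gamma, \Pi) = \mathrm{pd}_{R\Gamma}(\Delta) + 1$ and $\mathrm{hd}_R(\Gamma, \Pi) = \mathrm{fd}_{R\Gamma}(\Delta) + 1$ in the nondegenerate cases, with straightforward adjustments when either dimension is $0$ or $\Delta = 0$. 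Thus the lemma reduces to the two inequalities $\mathrm{fd}_{R\Gamma}(\Delta) \leq \mathrm{pd}_{R\Gamma}(\Delta) \leq \mathrm{fd}_{R\Gamma}(\Delta) + 1$.

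The first inequality is formal: every projective $R\Gamma$--module is flat, so any projective resolution of $\Delta$ is automatically a flat resolution and $\mathrm{fd}_{R\Gamma}(\Delta) \leq \mathrm{pd}_{R\Gamma}(\Delta)$.

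For the second inequality, I would assume $d := \mathrm{fd}_{R\Gamma}(\Delta) < \infty$ and start with a free resolution $\cdots \to F_1 \to F_0 \to \Delta \to 0$. Let $K := \ker(F_{d-1} \to F_{d-2})$ be the $d$-th syzygy of $\Delta$, with the obvious conventions when $d \in \{0,1\}$. Dimension shifting yields $\mathrm{Tor}_j^{R\Gamma}(K, V) \cong \mathrm{Tor}_{j+d}^{R\Gamma}(\Delta, V)$ for all $j \geq 1$ and all $V$, and these groups vanish by the flat dimension hypothesis, so $K$ is flat. The aim is then to bound the projective dimension of $K$ by $1$; any such bound splices with the partial free resolution to produce a projective resolution of $\Delta$ of length $d+1$, giving $\mathrm{pd}_{R\Gamma}(\Delta) \leq d + 1$.

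The principal obstacle is precisely this last step, since over a general ring a flat module need not have finite projective dimension. Here one invokes the Bieri--Eckmann theorem, whose proof combines Lazard's presentation of any flat module as a filtered colimit of finitely generated free $R\Gamma$--modules with a Kaplansky/Auslander-style cardinality argument: by choosing the $F_i$ in the initial resolution with ranks controlled by the cardinality of $R\Gamma$ together with a set of generators for $\Delta$, the syzygy $K$ has bounded cardinality, and a flat $R\Gamma$--module of that cardinality admits a length-$1$ projective resolution. This yields $\mathrm{pd}_{R\Gamma}(K) \leq 1$ and completes the proof.
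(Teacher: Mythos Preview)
The paper does not give its own proof of this lemma; it is simply quoted from \cite[Lemma 2.8]{Kapovich}, which in turn follows Bieri and Bieri--Eckmann. Your outline is exactly the standard argument used there: reduce the relative dimensions to the projective and flat dimensions of the relative augmentation kernel $\Delta$ via Shapiro's lemma and the long exact sequence of the pair, then appeal to Bieri's comparison $\mathrm{fd}\leq \mathrm{pd}\leq \mathrm{fd}+1$ for $R\Gamma$--modules.

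One point to tighten. In the last paragraph you gesture at a cardinality argument to show that the flat syzygy $K$ has $\mathrm{pd}_{R\Gamma}(K)\leq 1$, but the statement ``a flat $R\Gamma$--module of that cardinality admits a length-$1$ projective resolution'' is not true in the generality you describe. The precise input (as in Bieri's notes) is that a \emph{countably presented} flat module over any ring has projective dimension at most $1$; the general cardinality version gives a bound growing with the cardinality, not always $1$. So the argument, as written, implicitly uses that $\Gamma$ (hence $R\Gamma$ for countable $R$) is countable, which is harmless in the setting of this paper and of \cite{Kapovich} (discrete subgroups of $\op{Isom}(X)$), but should be stated rather than hidden in the phrase ``bounded cardinality.'' With that hypothesis made explicit, your proof is correct and matches the cited source.
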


\subsection{Critical index}

Let $X$ be a Hadamard space with $p \in X$, and $\Ga<\op{Isom}(X)$ be a discrete subgroup. Let $\mu_x$ be any conformal density of dimension $\delta_\mu$. Denote $\partial_\mu X\subset \partial_\infty X$ the support of $\mu$. For any $x\in X$, $\theta\in \partial_\infty X$, we denote the Hessian of the Busemann function $B_\theta(x)$ by $\nabla dB_{(x,\theta)}$, which always have a null space in the direction $v_{x\theta}$, and when restricted to its orthogonal complement is the same as the second fundamental form of the horosphere through $x$ and $\theta$.

\begin{definition}\label{D:CritIndex} 
Let $X$ be an $n$--dimensional Hadamard space, $\Ga<\op{Isom}(X)$ a torsion-free discrete subgroup, and $\mu_x$ be any conformal density of dimension $\delta_\mu$. We define the \textbf{critical index} of $\Ga$ associated to $\mu$ to be
\[j_X(\Gamma,\mu) = j_X(\mu):=\min\left(\{k\in \mathbb N: \inf_{(x,\theta)\in X\times \partial_\mu X} \op{tr}_k(\nabla dB_{(x,\theta)})>\delta_\mu\}\cup\set{n+1}\right).\]	
and the critical index of $\Ga$
\[ j_X(\Gamma)=\inf \set{j_X(\mu): \mu \text{ is a } \delta_\mu-\text{conformal density}}.\]
\end{definition}

\begin{remark}
In the case that $K_X\leq -1$, then $j_X(\Gamma)\leq \floor{\delta(\Gamma)}+2$ (see Corollary \ref{cor:neg_curved}). In many cases, $j_X(\Ga)$ is achieved when $\delta_\mu=\delta(\Ga)$, and the corresponding $\delta(\Ga)$--conformal density $\mu$ is known to be unique up to scale, hence $j_X(\mu)$ only depends on $\Ga$ and $j_X(\mu)=j_X(\Gamma)$.
\end{remark}

We now restate Theorem \ref{thm:main} from the introduction which we will prove in this section. Recall that $M_{\eps}$ is the $\eps$--thin part of $M$.

\begin{reptheorem}{thm:main}
If $X$ is a Hadamard space with bounded sectional curvatures, $\Gamma < \mathrm{Isom}(X)$ is a discrete, torsion-free subgroup with $M = X/\Gamma$, and $\mathbb{V}$ is a flat bundle over $M$, then for every $\epsilon > 0$, the homomorphism $i_k:H_k(M_{\eps};\mathbb V)\rightarrow H_k(M;\mathbb V)$ induced by inclusion is surjective whenever $k\geq  j_X(\Gamma)$.
\end{reptheorem}

\begin{remark}\label{rem:long_exact}
In particular, if for some choice of $\eps>0$, $M_{\eps}$ is empty or otherwise $H_k( M_{\eps};\mathbb{V})=0$, then $H_{k}(M;\mathbb{V})=0$  for any $k \geq j_X(\Gamma)$. By considering the long exact sequence
\[ \begin{tikzcd} \cdots\ar[r,"\partial_{k+1}"]&H_k(M_{\eps};\bb{V})\ar[r,"i_k"]&H_k(M;\bb{V})\ar[r,"q_k"]&H_k(M,M_{\eps};\bb{V})\ar[r,"\partial_k"]&\cdots \end{tikzcd} \]
We see that the surjectivity of the map $i_k$ is equivalent to either the vanishing of the map $q_k$ or the injectivity of $\partial_k$. While we do not in general obtain that $H_k(M,M_{\eps};\V)=0$, we do obtain this in an important special case (see Theorem \ref{thm:relative-vanishing}).
\end{remark} 

\begin{corollary}\label{cor:hd_bound}
Given a Hadamard space $X$ whose sectional curvature is bounded from below, and any discrete, torsion-free subgroup $\Gamma<\op{Isom}(X)$. If the injectivity radius of $X/\Ga$ has a uniform positive lower bound, then the homological dimension satisfies $\op{hd}_R(\Ga)\leq j_X(\Ga)-1$.
\end{corollary}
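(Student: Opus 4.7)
The plan is to observe that the hypothesis of a uniform positive lower bound on the injectivity radius forces the $\eps$--thin part $M_\eps$ to be empty for sufficiently small $\eps$, and then to feed this into Theorem \ref{thm:main} together with the fact that $M$ is a $K(\Gamma,1)$--space.

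First, let $\eps_0 > 0$ denote a uniform lower bound for the injectivity radius of $M = X/\Gamma$. For any $0 < \eps < \eps_0$, we have $\op{injrad}(x) \geq \eps_0 > \eps$ for every $x \in M$, so $M_\eps = \emptyset$. In particular $H_k(M_\eps;\mathbb{V}) = 0$ for every $k$ and every flat bundle $\mathbb{V}$ over $M$.

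Next, invoke Theorem \ref{thm:main}: the inclusion--induced map $H_k(M_\eps;\mathbb{V}) \to H_k(M;\mathbb{V})$ is surjective for all $k \geq j_X(\Gamma)$. Combining this surjectivity with the vanishing from the previous step yields $H_k(M;\mathbb{V}) = 0$ for every $k \geq j_X(\Gamma)$ and every flat bundle $\mathbb{V}$ on $M$.

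Finally, since $X$ is a Hadamard manifold (hence contractible) and $\Gamma < \op{Isom}(X)$ is discrete and torsion--free (so acts freely and properly discontinuously), $M = X/\Gamma$ is a $K(\Gamma,1)$. By the identification of group (co)homology with sheaf (co)homology of the associated flat bundle discussed in Section \ref{sec:hom_prelim}, we have $H_k(\Gamma;V) \cong H_k(M;\mathbb{V})$ for every $R\Gamma$--module $V$ and its associated flat bundle $\mathbb{V}$. Hence $H_k(\Gamma;V) = 0$ for every $R\Gamma$--module $V$ and every $k \geq j_X(\Gamma)$, which by the Bieri characterization of $\op{hd}_R$ stated earlier gives $\op{hd}_R(\Gamma) \leq j_X(\Gamma) - 1$. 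There is no substantive obstacle here; this corollary is essentially a direct reading of Theorem \ref{thm:main} in the special case where the thin part is absent, and the only conceptual point is translating the uniform injectivity radius hypothesis into the vanishing of $M_\eps$ for small $\eps$.
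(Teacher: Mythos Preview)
Your proof is correct and follows essentially the same approach as the paper: observe that the uniform injectivity radius bound makes $M_\eps$ empty (hence its homology vanishes) for small $\eps$, apply Theorem \ref{thm:main}, and conclude via the identification $H_k(\Gamma;V)\cong H_k(M;\mathbb{V})$. The paper's version is terser, but your added justification that $M$ is a $K(\Gamma,1)$ and the explicit appeal to Bieri's characterization are appropriate elaborations of steps the paper leaves implicit.
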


\begin{proof}
As the injectivity radius of $M=X/\Ga$ is bounded below by $\eps_0$, for $\eps<\eps_0$ we have $H_k(M_{\eps},\bb{V})=0$. Hence Theorem \ref{thm:main} implies that $H_k(M,\bb{V})=0$ for all $k\geq j_X(\Ga)$ and flat bundles $\bb{V}$. Thus $\mathrm{hd}_R(\Gamma) \leq j_X(\Gamma)-1$. 
\end{proof}

\begin{corollary}\label{cor:neg_curved}
If $X$ has pinched negative curvature $-b^2\leq K_X\leq -1$ and $\Gamma < \mathrm{Isom}(X)$ is a discrete, torsion-free subgroup with no parabolics, then $\op{hd}_R(\Gamma)\leq j_X(\Gamma)-1\leq \delta(\Gamma)+1$.
\end{corollary}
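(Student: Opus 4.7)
The strategy is to apply Theorem \ref{thm:main} for a single sufficiently small $\epsilon>0$ and use the hypothesis ``no parabolics'' to force the thin part $M_\epsilon$ to have the homotopy type of a $1$--complex, so that $H_k(M_\epsilon;\mathbb{V})=0$ for all $k\geq 2$ and all flat bundles $\mathbb{V}$. Combining the resulting vanishing of $H_k(M;\mathbb{V})$ with the curvature--based estimate $j_X(\Gamma)\leq \lfloor\delta(\Gamma)\rfloor+2$ noted after Definition \ref{D:CritIndex} will then give the desired bound on $\op{hd}_R(\Gamma)$.

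First I would invoke the Margulis lemma in pinched negative curvature (in the form due to Ballmann--Gromov--Schroeder): there is a constant $\epsilon_0=\epsilon_0(n,b)>0$ such that for any $\epsilon<\epsilon_0$ and any $x\in M$, the subgroup of $\Gamma$ generated by elements $\gamma$ satisfying $d(\widetilde{x},\gamma\widetilde{x})<\epsilon$ is virtually nilpotent. Since $\Gamma$ is torsion-free and contains no parabolic isometries, each such subgroup consists entirely of hyperbolic elements sharing a common axis, hence is infinite cyclic. Therefore each connected component of $M_\epsilon$ is a tubular neighborhood of a short closed geodesic and deformation retracts onto that geodesic, so it is homotopy equivalent to $S^1$. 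Consequently $H_k(M_\epsilon;\mathbb{V})=0$ for every flat bundle $\mathbb{V}$ on $M$ and every $k\geq 2$.

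Next I would verify that $j_X(\Gamma)\geq 2$. This is essentially built into the definition: for any $(x,\theta)\in X\times\partial_\infty X$ the Hessian $\nabla dB_{(x,\theta)}$ has a zero eigenvalue in the radial direction $v_{x\theta}$, so $\op{tr}_1(\nabla dB_{(x,\theta)})=0$, which is not strictly greater than $\delta(\Gamma)\geq 0$; hence $k=1$ never satisfies the condition in Definition \ref{D:CritIndex}. Because Theorem \ref{thm:main} applies to our setting (a Hadamard manifold of bounded sectional curvature) and $j_X(\Gamma)\geq 2$, the surjection $H_k(M_\epsilon;\mathbb{V})\to H_k(M;\mathbb{V})$ forces $H_k(M;\mathbb{V})=0$ for every $k\geq j_X(\Gamma)$ and every flat bundle $\mathbb{V}$. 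Via the identification $H_k(\Gamma;V)\cong H_k(M;\mathbb{V})$ from \S \ref{sec:hom_prelim}, this yields $\op{hd}_R(\Gamma)\leq j_X(\Gamma)-1$. Finally, applying the observation from the remark after Definition \ref{D:CritIndex} that $K_X\leq -1$ implies $j_X(\Gamma)\leq \lfloor\delta(\Gamma)\rfloor+2$, we conclude
\[ \op{hd}_R(\Gamma)\leq j_X(\Gamma)-1\leq \lfloor\delta(\Gamma)\rfloor+1\leq \delta(\Gamma)+1. \]

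The main obstacle is really the first step: one cannot directly cite Corollary \ref{cor:hd_bound}, because without a positive lower bound on the injectivity radius the thin part need not be empty, only that it must consist of Margulis tubes (as opposed to cusps). The work is in identifying the homotopy type of these tubes precisely enough to know their higher homology vanishes, which is exactly where the ``no parabolics'' assumption is used. Everything else is an application of Theorem \ref{thm:main} together with the curvature estimate on $j_X(\Gamma)$.
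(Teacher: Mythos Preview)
Your proposal is correct and follows essentially the same approach as the paper: use the Margulis lemma in pinched negative curvature together with the ``no parabolics'' hypothesis to identify each component of $M_\epsilon$ as a Margulis tube homotopy equivalent to $S^1$, deduce $H_k(M_\epsilon;\mathbb{V})=0$ for $k\geq 2$, and then apply Theorem~\ref{thm:main} together with $j_X(\Gamma)\geq 2$ and the bound $j_X(\Gamma)\leq\lfloor\delta(\Gamma)\rfloor+2$ coming from $K_X\leq -1$. One small caveat: the remark after Definition~\ref{D:CritIndex} that you cite for $j_X(\Gamma)\leq\lfloor\delta(\Gamma)\rfloor+2$ actually defers its justification to this very corollary, so to avoid circularity you should include the one-line verification (the Hessian $\nabla dB_{(x,\theta)}$ has one zero eigenvalue and the rest at least $1$, so $\operatorname{tr}_k\nabla dB_{(x,\theta)}\geq k-1>\delta(\Gamma)$ once $k\geq\lfloor\delta(\Gamma)\rfloor+2$), which is exactly what the paper does in its proof.
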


\begin{remark}
Corollary \ref{cor:neg_curved} fails when the group $\Ga$ is allowed to have parabolics. For example, consider the real hyperbolic $n$--manifold $X/\Ga$ where $\Ga\cong\Z^{n-1}$ is a parabolic subgroup acting cocompactly on a horosphere, as a lattice in $\R^{n-1}$. In this case the exponential distortion of the horospheres gives $\delta(\Ga)=\frac{n-1}{2}$, even though the limit set is one point. However, the homological dimension is $\op{hd}_R(X/\Ga)=n-1$. Generalizations of this phenomena occur when $\Ga$ has a $\Z^{k}$ parabolic subgroup when $k$ is large enough.
\end{remark}

\begin{proof}[Proof of Corollary \ref{cor:neg_curved}]
We observe that since $\Gamma$ has no parabolic or torsion elements, it consists entirely of hyperbolic elements. By the Margulis Lemma, for $\eps<\eps_0$ where $\eps_0$ is the Margulis constant, $M_{\eps}$ is a disjoint union of a countable number of tubes $\coprod_i T_i$, where each component tube $T_i$  is homeomorphic to $S^1\times D^{n-1}$. Hence for all $m\geq 0$, 
\[ H_m(M_{\eps};\bb{V})=H_m\left(\coprod_i T_i;\bb{V}\right)\cong\bigoplus_i H_m(T_i;\bb{V}) \] 
which vanishes for $m\geq 2$ and any flat bundle $\bb{V}$ since each $T_i$ is homotopy equivalent to $S^1$. Hence by \cref{thm:main}, $\op{hd}_R(\Gamma)\leq j_X(\Gamma)-1$. We claim that $j_X(\Ga)\leq \floor{\delta(\Ga)}+2$. Recall that $\delta_\mu=\delta(\Gamma)$ if we choose $\mu$ to be the Patterson--Sullivan measure. Since of the curvature upper bound, for any $x\in M$ and $\theta\in \partial_\infty X$, $\nabla dB_{(x,\theta)}$ has one zero eigenvalue and all others at least $1$. Hence when $\floor{\delta(\Ga)}+2\leq n$, then \[ \tr_k(\nabla dB_{(x,\theta)})\geq\floor{\delta(\Ga)}+1>\delta(\Ga),\]
for $k\geq \floor{\delta(\Ga)}+2$ and the claim follows. If $\floor{\delta(\Ga)}+2> n$, then $j_X(\Ga)\leq n+1\leq \floor{\delta(\Ga)}+2$, and the claim also holds. As $j_X(\Ga)\geq 2$ the corollary now follows from Theorem \ref{thm:main}.
\end{proof}

Corollary \ref{cor:hd_main2} is the combination of our previous two corollaries. 

\begin{remark}
Note that there is no interesting lower bound for $\op{hd}_R(\Gamma)$ in terms of $\delta(\Gamma)$ as one can build finitely generated free subgroups of $\op{Isom}(\mathbf{H}_\R^n)$ via the generalized Schottky procedure such that $\delta(\Gamma)$ is large, but $\op{hd}(\Gamma)=1$ for any free group. Although when $n\geq 4$ is even, among geometrically finite representations of a subgroup of a finite volume hyperbolic 3--manifold group, $\delta(\Gamma)$ cannot be made arbitrarily close to $n-1$ (\cite[Cor 1.4]{Bowen15}).
\end{remark}

\subsection{Gromov's Mass Vanishing Theorem}

To prove Theorem \ref{thm:main}, we will need a generalization of Gromov's mass vanishing theorem (\cite{Gromov07}) to nonpositive curvature. Following \cite{Kapovich}, we introduce the notion of volume for a relative homology class. Let $X$ be either a simplicial complex or a Riemannian manifold, and let $Y\subset X$ be either a subcomplex or a closed submanifold with piecewise-smooth boundary. For any piecewise smooth $k$--simplex $\sigma\colon \Delta^k\rightarrow X$, we define the \textbf{relative volume} to be
\[\vol(\sigma|_{X\backslash Y})=\int_{\sigma^{-1}(X\backslash Y)\subset \Delta^k}\Jac_k\sigma dV_{\Delta},\]
where $dV_{\Delta}$ is the volume form on the Euclidean $k$--simplex $\Delta^k$. For $\alpha\in H_k(X,Y,\bb V)$, we define the \textbf{volume of $\alpha$} to be
 \[\vol(\alpha):=\inf \{\vol(c)~|~c\textit{ is a piecewise smooth chain representing }\alpha\},\]
where
\[\vol(c)=\sum_{i=1}^\ell \vol(\sigma_i|_{X\backslash Y}),\]
if $c=\sum_{i=1}^\ell v_i\otimes \sigma_i$ for $v_i\in \bb V$, and $\sigma_i\colon \Delta_k\rightarrow X$ a piecewise smooth simplex in $X$. Note that the volume of a cycle does not take into account the coefficients $v_i$. 

\begin{theorem}[Mass Vanishing Theorem]\label{thm:small_vanishing}
Let $X$ be an $n$--dimensional Hadamard space whose sectional curvature is uniformly bounded from below by $-a^2$, $\Ga<\op{Isom}(X)$ a discrete, torsion-free subgroup and $V$ a $R\Ga$--module associated to a flat bundle $\V$ over $M=X/\Ga$. For any $\epsilon>0$, there exists $\Theta=\Theta(\epsilon,n,a)>0$ such that any relative homology class $\alpha\in H_k(M,M_{\epsilon};\bb V)$ is either trivial under the map (induced by the inclusions)
 \[i_*:H_k(M,M_{\epsilon};\bb V)\rightarrow H_k(M,M_{2\epsilon};\bb V),\]
or $\mathrm{Vol}(\alpha) \geq \Theta$.
\end{theorem}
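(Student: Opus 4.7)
The plan is to adapt the classical mass-vanishing scheme of Gromov, as refined by Kapovich, to the present Hadamard setting by exploiting both the upper curvature bound $K\leq 0$ and the lower bound $K\geq -a^2$. It suffices to prove the contrapositive: if $\alpha\in H_k(M,M_\epsilon;\bb{V})$ admits a representative $c=\sum v_i\otimes\sigma_i$ with $\vol(c|_{M\setminus M_\epsilon})<\Theta$ for a sufficiently small $\Theta=\Theta(\epsilon,n,a)>0$, then the image of $\alpha$ in $H_k(M,M_{2\epsilon};\bb{V})$ vanishes.

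The first step is normalization. Since $X$ is uniquely geodesic, I would replace each $\sigma_i$ by its geodesic barycentric straightening in $X$; this neither changes the homology class nor increases the $k$--volume. Then I iterate barycentric subdivision (with restraightening) until every subsimplex has diameter less than a small parameter $\eta=\eta(\epsilon,n,a)>0$ to be fixed later, chosen smaller than $\epsilon/4$. Decompose the resulting chain as $c=c_{\mathrm{in}}+c_{\mathrm{out}}$, where $c_{\mathrm{in}}$ collects the simplices supported in $M_{2\epsilon}$ and $c_{\mathrm{out}}$ those meeting $M\setminus M_{2\epsilon}$. By the diameter bound, each simplex of $c_{\mathrm{out}}$ lies in a metric ball $B(x_\sigma,\eta)$ whose center has injectivity radius exceeding $3\epsilon/2$, so this ball lifts isometrically to a round $\op{CAT}(0)$ ball $\tilde B_\sigma\subset X$.

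The second step is a volume count. The G\"unther inequality (valid since $X$ is Hadamard, so $K\leq 0$) yields $\vol_n(B(x_\sigma,\eta))\geq c_n\eta^n$. The Bishop--Gromov inequality (valid since $K\geq -a^2$) bounds $\vol_n(B(x_\sigma,5\eta))$ above by a constant depending only on $n$, $a$, and $\eta$. A Vitali selection then produces a disjoint subfamily of the containing balls whose $5$--dilates cover $\mathrm{supp}(c_{\mathrm{out}})$, with cardinality controlled by a coarea slicing argument that compares the hypothesis $\vol(c_{\mathrm{out}})<\Theta$ against the $n$--volume of its $\eta$--tubular neighborhood; this yields an overlap multiplicity depending only on $n$ and $a$.

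The third step is the chain homotopy, and is where the bulk of the work lies. Inside each lifted ball $\tilde B_\sigma\subset X$, the $\op{CAT}(0)$ geometry provides a canonical geodesic cone from the center, exhibiting the enclosed straight simplex as a relative boundary supported in $\tilde B_\sigma$. Iteratively pushing simplices outward across the nerve of the Vitali cover, in the style of a Federer--Fleming polyhedral deformation, produces a $(k+1)$--chain $\omega\in C_{k+1}(M;\bb{V})$ with $\partial\omega=c_{\mathrm{out}}-c'_{\mathrm{out}}$ where $c'_{\mathrm{out}}\in C_k(M_{2\epsilon};\bb{V})$. Consequently $c=c_{\mathrm{in}}+c'_{\mathrm{out}}+\partial\omega$ is a relative boundary in $C_*(M,M_{2\epsilon};\bb{V})$, giving triviality of $\alpha$ in $H_k(M,M_{2\epsilon};\bb{V})$. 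The main obstacle is ensuring that the local conings fit together consistently across the nerve and that the terminal chain lies entirely in $M_{2\epsilon}$; Kapovich's inductive argument on strata handles the constant-curvature case, and its adaptation to variable curvature is made possible precisely by the uniform G\"unther and Bishop--Gromov bounds from the second step, which control both the multiplicity of the cover and the recursion depth in terms of $\Theta/\eta^k$.
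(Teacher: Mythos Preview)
Your outline follows the right template (Gromov--Kapovich mass vanishing via a nerve and Federer--Fleming deformation), but the third step has a genuine gap, and it is exactly the step where the passage from constant to variable nonpositive curvature requires new input. The Federer--Fleming deformation is a procedure \emph{in the simplicial complex} $\mathcal{N}$: a chain of small $k$--volume in the nerve is pushed onto the $(k-1)$--skeleton, giving vanishing in $H_k(\mathcal{N},\mathcal{N}_\epsilon;\mathbb{W})$. To conclude anything about $H_k(M,M_{2\epsilon};\mathbb{V})$ you must transport this back via a controlled homotopy inverse $\bar{\eta}\colon\mathcal{N}\to M$ to the nerve map $\eta\colon M\to\mathcal{N}$, and show that $\bar{\eta}\circ\eta$ moves points by at most $\epsilon/4$. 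You never construct $\bar{\eta}$; the phrase ``pushing simplices outward across the nerve'' conflates the deformation in $\mathcal{N}$ with its (so far nonexistent) image in $M$. This is precisely where Kapovich uses constant curvature (his Remark~6.8): his $\bar{\eta}$ is built from iterated geodesic averages. The paper replaces this by the Riemannian barycenter $\bar{\eta}(x)=\mathrm{bar}\big(\sum_i a_i(x)\,\delta_{x_i}\big)$, which exists and lies in the convex hull of the ball centers in any $\mathrm{CAT}(0)$ space. Your assertion that the adaptation ``is made possible precisely by the uniform G\"unther and Bishop--Gromov bounds'' therefore misidentifies the difficulty: those bounds control the \emph{multiplicity} of the cover (which is indeed needed), but they do not supply $\bar{\eta}$.

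Two smaller issues. First, geodesic straightening in a Hadamard manifold does \emph{not} in general decrease $k$--volume: a smooth simplex that collapses most of $\Delta^k$ near a point has tiny volume, while its straightening with the same vertex set can be large. This is not fatal---subdivision of piecewise smooth simplices already makes diameters small without restraightening---but the claim as written is false. Second, your step~2 asserts that the \emph{cardinality} of the Vitali family is bounded in terms of $\Theta/\eta^k$ via coarea. It is not: a $k$--chain of total volume $<\Theta$ can have support meeting arbitrarily many disjoint $\eta$--balls (scatter many nearly degenerate simplices far apart). What is bounded, via Bishop--Gromov, is the multiplicity of the cover, and that is what the argument actually uses.
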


\cref{thm:small_vanishing} qualitatively generalizes \cite[Thm 7.1]{Kapovich}, which treats the constant curvature case of Theorem \ref{thm:small_vanishing}. The proof of \cref{thm:small_vanishing} is carried out in \cref{app:GromovTheorem} and closely follows that of \cite[Thm 7.1]{Kapovich} with certain necessary modifications. Most notably, we need to change the geodesic filling argument and estimates to those of a barycentric filling.

\subsection{Proof of Theorem \ref{thm:main}}

Using Theorem \ref{thm:small_vanishing} and  Theorem \ref{prop:phi_contracts}, we now prove Theorem \ref{thm:main}.

\begin{proof}
Since the homological dimension of $M$ is always at most $n$, we may assume that we are in the case where $j_X(\Gamma)< n+1$. By Remark \ref{rem:long_exact}, the surjectivity of $i_k$ is equivalent to the vanishing of the map $q=q_k$. We will show this. Consider any nontrivial class $\alpha\in H_k(M;\bb{V})$ and a representative chain $c$ of $\alpha$. We recall that the map $q$ is induced by the map at the chain level which we denote by the same letter $q$. Writing the singular chain $c=\sum_i^{\ell} v_i\otimes\sigma_i $ for $\sigma_i\colon \Delta^k\to M$ and $v_i\in \bb V$, we have the push-forward $(\phi_{-t})_*c=\sum_i v_i\otimes \left(\phi_{-t}\of\sigma_i\right) $. 

When $k\geq j_X(\Ga)$ then by hypothesis there is some $\eps_0>0$ such that $\tr_k \nabla dB_{(x,\theta)}\geq \delta(\Ga)+\eps_0$ for each $x\in X$ and $\theta$ in the support of $\mu_x$. Hence $\tr_k \nabla df\geq \eps_0$ by Equation \eqref{eq:Hessian f}. Apply Theorem \ref{prop:phi_contracts}, for sufficiently large $t>0$, the volume of each simplex map  $\phi_{-t}\of\sigma_i$ is bounded by 
\[ \vol(\phi_{-t}\of\sigma_i)\leq \frac{\Theta}{2\ell}, \]
where $\Theta$ is the constant in Theorem \ref{thm:small_vanishing}. Hence the total volume of $(\phi_{-t})_*c$ is less than $\Theta$. The map $q$ is induced by inclusion of chains, and hence the volume of $q((\phi_{-t})_*c)$ is no more than that of $(\phi_{-t})_*c$. By Theorem \ref{thm:small_vanishing}, $i_* (q(\alpha))=0\in H_k(M,M_{2\eps};\V)$. This shows that the natural map $H_k(M;\V)\rightarrow H_k(M,M_{2\eps};\V)$ is the zero map for any $\eps>0$. Therefore the theorem follows.
\end{proof}

\subsection{Locally symmetric spaces}

We apply our Theorem \ref{thm:main} to the setting of locally symmetric spaces. For the present paper, we will only consider the case of real rank one, and reserve the discussion of the higher rank setting to the follow-up paper \cite{CMW26}. When $X$ is a rank one symmetric space, the isometry group acts transitively on the unit tangent bundle.  Hence the second fundamental forms of horospheres, $\nabla dB_{(x,\theta)}$, do not depend on $x$ and $\theta$. As a result, $j_X(\Gamma)$ is rather simple to compute/estimate. In fact, we are able to recover and complement the results of \cite{Kapovich} and \cite{CFM19}.

\begin{proof}[Proof of Corollary \ref{cor:rank one}] Observe that for rank one symmetric spaces and any pair $(x,\theta)\in X\times \pa_\infty X$, the bilinear forms $\nabla dB_{(x,\theta)}$ are conjugate to each other, hence they have the same $k$--trace. After choosing a diagonalized basis (similar to the proof of Corollary \ref{T:MainJac}), we may write $\nabla dB_{(x,\theta)}$ in each case respectively as:
\begin{enumerate}
\item $\op{diag}(0, 1, ..., 1)$ when $X$ is real hyperbolic,
\item $\op{diag}(0, 1,..,1, 2)$ when $X$ is complex hyperbolic,
\item $\op{diag}(0, 1,..,1, 2, 2, 2)$ when $X$ is quaternionic hyperbolic, and
\item $\op{diag}(0, 1^{(8)}, 2^{(7)})$ when $X$ is Cayley hyperbolic.
\end{enumerate}
By carefully adding up the $k$--trace of $\nabla dB_{(x,\theta)}$ in each case, the result follows directly from Corollary \ref{cor:neg_curved}.
\end{proof}

When $X$ is higher rank, $\nabla dB_{(x,\theta)}$ depends on $\theta$ (although it still does not depend on $x$), so the support of the conformal density becomes rather crucial when it comes to the computations and estimates of the critical index. In fact, the particular choice of $G$--orbit of $\theta$ is very delicate. It relies on several ingredients, including the extension of Patterson--Sullivan theory formulated by Benoist and Quint \cite{Benoist97,Quint-divergence02,Quint02,Quint03} (see also \cite{Link04}), together with an upper bound estimate of the indicator function given by Lee and Oh \cite{LeeOh22, Oh98,Oh02}, which is also partly hinted in \cite{FraczykLowe}. Again, this will be discussed more fully in \cite{CMW26}.

\subsection{Zero-thin homology at infinity}

In Theorem \ref{thm:main}, there remains the possibility that the subgroups arising from the thin part could possibly contribute a large (co)homological dimension. Note that in the case of  pinched negative curvature, this can only happen in the presence of cusps, of varying ranks, since the Margulis tubes always have fundamental group $\mathbb Z$ whose (co)homological dimension is one. However, in the case of general Hadamard space, or when the group is not finitely generated, it is possible to have thinner and thinner compact regions escaping to infinity that produce a large (co)homological dimension while contributing a small amount to the critical exponent. Here we present two cautionary examples that show our main theorem cannot be improved with respect to the homology of the thin part in its current generality. In the first example, $H_{n-1}(M;\V)=\lim_{\eps\to 0} i_*H_{n-1}(M_{\eps};\V)\neq 0$. In particular, some nonvanishing residual ``zero-thin'' homology can remain even in codimension one. 

\begin{example}\label{ex:lochness}
Consider an infinite genus hyperbolic surface $\Sigma$ whose topological type is that of the double Loch Ness monster which can be described as an infinite connect sum $\Sigma=\#_{i\in\Z} S_i$ where for each $i$, $S_i$ is a compactification of a doubly punctured torus with boundary circles $a_i$ and $b_i$ and $b_i$ is glued to $a_{i+1}$ for each $i$, and the resulting curve in $\Sigma$ will be labelled $c_i$. 

It is well known that such a surface carries a metric of constant curvature $-1$ such that the curves $c_i$ are geodesics whose lengths can be chosen to be any sufficiently small positive numbers. We choose these to be $\frac1i$. In this metric, the $\eps$--thin part, $\Sigma_\eps$, for sufficiently small $\eps>0$, is an infinite disjoint union of tubes centered around $c_i$ for all sufficiently large $\abs{i}$. Unlike the finite genus case, the curves $c_i$ all represent a nontrivial homology class, and they are all homologous to $[c_0]\in H_1(\Sigma;\Z)$. In particular, the intersection of the inclusion maps of the homology of $\Sigma_\eps$ is 
\[ \bigcap_{\eps\to 0} i_*H_1(\Sigma_\eps;\Z)=\inner{[c_0]}<H_1(\Sigma;\Z), \] 
and does not vanish. Taking the Riemannian product $M=\Sigma\times N$ of $\Sigma$ with a closed non-positively curved manifold  of dimension $n-2$ yields a non-positively curved $n$--manifold. The K\"unneth Theorem together with functoriality of the direct limit yields that 
\[ H_{n-1}(M,\Z)\supset  \bigcap_{\eps\to 0} i_*H_{n-1}(M_{\eps},\Z)\supset\inner{[c_0\times N]}\cong \Z. \] 
In particular, for $\Gamma=\pi_1(M)$, $H_{n-1}(\Gamma,\Z)\neq 0$. When $N$ is chosen to be a hyperbolic surface, then $M$ is covered by the rank $2$ symmetric space $\mathbf{H}_\R^2\times\mathbf{H}_\R^2$. We are not immediately aware of an example of this form in an irreducible higher rank symmetric space, but we suspect that they exist. We also remark that the corresponding locally finite homology does vanish in dimension $3$ for this example.
\end{example}

\begin{remark}
\cref{ex:lochness} in the case when $N$ is a point shows that even in constant curvature, and for any $\eps>0$, $M_{\eps}$ need not be either empty or consist solely of parabolic cusp regions. In the non-positively curved case, we see $M_{\eps}$ can be quite complicated.
\end{remark}

In the second example, we show that even when $M_{\eps}=\emptyset$ for sufficiently small $\eps>0$, we may still have nonvanishing homology in codimension $1$ for open manifolds.

\begin{example}\label{ex:Heintze}
(Heintze Construction \cite{Schroeder91}) Consider a geometrically finite, infinite volume, manifold $N$ with constant curvature $-1$ and a finite number of maximal rank ($n-1$) cusps. As each cusp is a topological product $T^{n-1}\times [0,\infty)$ on which the constant curvature metric has a warped product structure, we can modify the convex warping function to smoothly transition from $-1$ to $0$ curvature in finite time down each cusp (see e.g. \cite{ConnellSuarez-Serrato19} for explicit details). Let the resulting non-positively curved manifold be $M$. The parabolic torus cross-sections down each cusp of $M$ eventually become totally geodesic and the remainder of each end is metrically a product. Hence, the injectivity radius of this new metric is bounded below. Nevertheless, for any $\eps<\op{injrad}(M)$, the tori in the ends represent a nontrivial class of $H_{n-1}(M;\Z)=H_{n-1}(M,M_{\eps};\Z)$. (By the infinite volume condition none of them bound a finite $n$--chain.) This example shows that we necessarily have $j_X(\Gamma)\geq n$. (It is not hard to see that $j_X(\Gamma)=n+1$ as the cusp cross section is a totally geodesic flat torus and thus $\nabla dB_{(x,\theta)}$ vanishes in the cusp direction.)
\end{example}

\subsection{Vanishing of relative homology}

From the long exact sequence of homology for the pair $(M,M_{\eps})$, we see that $H_k(M,M_{\eps};\mathbb V)=0$ implies the surjectivity of the map $H_k(M_{\eps};\V)\rightarrow H_k(M;\V)$. Thus obtaining a vanishing on the relative homology in the corresponding range of degrees is potentially an improvement of our Theorem \ref{thm:main}. In fact, when the topology of $M_{\eps}$ is less complicated as in the real hyperbolic case, Kapovich \cite{Kapovich} proved the vanishing of relative homology for the pair $(M,P_\eps)$ where $P_\eps$ is the cuspidal region of the $M_{\eps}$ and  $\eps$ is smaller than the Margulis constant $\eps_0$. We now generalize his result.

For general non-positively curved manifolds, the unbounded regions of the thin part may behave wildly. We say that a (necessarily connected) subset $R\subset M$ is \textbf{cuspidal} if $R$ admits a deformation retract onto a closed subset $R_0\subset R$ which is the quotient, under the universal covering map $X\to M$, of a horoball in $X$ by a subgroup $\Gamma_0<\Gamma$ stabilizing this horoball in $X$.

\begin{theorem}\label{thm:relative-vanishing}
Let $X$ be a Hadamard space and $\Gamma < \mathrm{Isom}(X)$ a discrete, torsion-free subgroup with $M = X/\Gamma$, $\mathbb{V}$ is a flat bundle over $M$, $k\geq  j_X(\Gamma)$, and $\epsilon > 0$. 
\begin{enumerate}
\item If $X$ has pinched negative curvatures, and $\eps<\eps_0$, then $H_{k+1}(M,M_{\eps};\mathbb V)=0$ and $H_{k}(M,P_{\eps};\mathbb V)=0$ where $P_\eps$ is the cuspidal part of $M_{\eps}$.
\item Let $\set{\mc{C}_{\eps}^\alpha}_{\alpha\in \mc{A}}$ be the collection of connected components of  $M_{\eps}$. If for each $\alpha$, either $H_{k}(\mc{C}_{\eps}^\alpha)=0$ or $\mc{C}_\eps^\alpha$ is cuspidal, then $H_{k+1}(M,M_{\eps};\mathbb V)=0$ and $H_{k}(M,P_{\eps};\mathbb V)=0$ where $P_\eps$ is the union of all cuspidal components of $M_{\eps}$.
\end{enumerate}
\end{theorem}

\begin{proof}
We assume as usual $j_X(\Ga)\leq n$, and otherwise the result holds automatically. Since $j_X(\Ga)\geq 2$, in either case, the $k$-th homology vanishes on the components of $M_{\eps}$ not in $P_{\eps}$, we have $H_{k}(M_{\eps},P_{\eps};\V)=\til{H}_{k}(M_{\eps}/P_{\eps};\V)=0$.  (Note that $M_{\eps}/P_{\eps}$ is just the disjoint union of a point and the components with vanishing $k$ homology and so the restriction of $\V$ still makes sense on this space.)  From the long exact sequence
\[ \begin{tikzcd} \cdots\ar[r,"\partial_{s+1}"]& H_s(M_{\eps},P_{\eps};\bb{V})\ar[r,"i_s"] & H_s(M,P_{\eps};\bb{V})\ar[r,"q_s"] & H_s(M,M_{\eps};\bb{V})\ar[r,"\partial_s"] & \cdots \end{tikzcd} \]
for the triple $(M,M_{\eps},P_{\eps})$, we conclude that the map $q_s$ is surjective when $s=k+1$ and is injective when $s=k$. We claim that $q_s$ is the zero map if $s\geq k$. Suppose we have this, then together with the surjectivity of $q_{k+1}$ and the injectivity of $q_k$ our result follows.

We now prove the claim $q_s=0$ whenever $s\geq k$.

Suppose $\beta\in C_s(M,P_{\eps};\V)$ is a singular cycle representing an arbitrary class in $H_s(M,P_{\eps};\V)$. In the first case let $P^0_{\eps}=P_{\eps}$. In the second case, let $P^0_{\eps}$ represent the union of the horoball quotients, one for each cuspidal component $C_{\eps}^{\alpha}$, contained in $P_{\eps}$. Since $P^0_{\eps}$ is a deformation retract of $P_{\eps}$, there is a cycle $\beta_0\in C_s(M,P_{\eps}^0;\V)$ whose natural inclusion into $C_s(M,P_{\eps};\V)$ represents the same homology class as $[\beta]$. Flowing for time $t$ along the unique length-minimizing unit speed geodesic starting at each point of $P_\eps$ and exiting out the respective cusp end induces a one parameter family of homeomorphisms $\psi_t\colon P_{\eps}^0\to P_{\eps}^t$ where $P_{\eps}^t$ is the image of $\psi_t$ in $P_{\eps}$. Since in either case, $\psi_t$ is a deformation retract, there is a cycle $\beta_t\in C_s(M,P_{\eps}^t;\V)$ whose inclusion in $C_s(M,P_{\eps};\V)$ is homologous to $\beta_0$. Moreover, $\beta_t$ can be constructed from $\beta_0$ by applying the flow $\psi_t$ to each simplex of $\partial \beta_0$ and subdividing appropriately. 

In the first case, $\beta_0=\beta$, and set $\eps(t)=\op{injrad}(P_{\eps}^t)$. Note that by uniform negative curvature $\eps(t)\rightarrow 0$ when $t\rightarrow \infty$. Moreover, by pinched negative curvature we know that the $\vol_s(\beta_t-\beta)$ is uniformly bounded (with respect to $t$) by $C_0\cdot\vol_{s-1}(\partial \beta)$ where $C_0$ depends on the pinching constants. It follows that $\vol_s(\beta_t)\leq \vol_s(\beta)+C_0\cdot \vol_{s-1}(\partial \beta)$. Apply our natural flow $\phi_{-t}$ for time $t$, since $s\geq k\geq  j_X(\Ga)$, it contracts the $s$--volume, and for sufficiently long time $t=T$, we may assume $\vol_s(\phi_{-T}\circ \beta_t)\leq \Theta$ for any $t$, where $\Theta$ is the constant as in Theorem \ref{thm:small_vanishing} for the scale $\eps/2$. Now we make the choice of $t=t_0$ large enough such that $\eps(t_0)<\eps/2e^{\delta T}$. Note that $\phi_{-T}$ is $e^{\delta T}$--Lipschitz by Theorem \ref{prop:phi_contracts}, so $\phi_{-T}\circ \beta_{t_0}$ lies in $C_s(M,P_{\eps/2};\V)$. Apply Theorem \ref{thm:small_vanishing} (at the scale of $\eps/2$), we see that $\phi_{-T}\circ \beta_{t_0}$ is trivial in $H_s(M,M_{\eps};\V)$. Finally, we see that $\phi_{-T}\circ \beta_{t_0}$ is homologous to $\beta_{t_0}$ in $C_s(M,P_{\eps/2};\V)$ hence also in $C_s(M,M_{\eps};\V)$, and $\beta_{t_0}$ is homologous to $\beta$ in $C_s(M,P_{\eps};\V)$ hence also in $C_s(M,M_{\eps};\V)$. Hence $q_s([\beta])\in H_s(M,M_{\eps};\V)$ vanishes and so $q_s=0$.

In the second case, $P_{\eps}^t$ is a union of horoball quotients in $P_{\eps}^0$ at depth $t$, that is for each $x\in \partial P_{\eps}^t$, $d(x,\partial P_{\eps}^0)=t$. The nonpositive curvature yields that $\vol_s(\beta_t)\leq \vol_s(\beta_0)+ t \vol_{s-1}(\partial \beta_0)$. Now observe by the bound on the norm of the vector field $\norm{\mf{X}}\leq\delta(\Gamma)$, $\phi_t$ moves points at most a distance $\delta t$. Since every point of the support $\partial \beta_t$ is at least distance $t$ to $\partial P_{\eps}^0$, and thus to $M\setminus P_{\eps}$, we have $(\phi_{-t})_*\beta_{\delta(\Gamma)t}$ has a boundary cycle that remains supported in $P_{\eps}$. Since the volume growth of $\phi_{-t}$ is exponentially decreasing in $t$, while that of $\beta_{\delta(\Gamma)t}$ is at most linearly growing in $t$, there is a $T$ such that $\vol_s((\phi_{-T})_*\beta_{\delta(\Gamma)T})<\Theta$, where $\Theta$ is again the constant as in Theorem \ref{thm:small_vanishing} for the scale $\eps/2$. Thus by Theorem \ref{thm:small_vanishing}, $(\phi_{-T})_*\beta_{\delta(\Gamma)T}$ represents zero class in $H_s(M,M_{\eps};\V)$. Finally, since $\beta$, $\beta_{\delta(\Gamma)T}$ and $(\phi_{-T})_*\beta_{\delta(\Gamma)T}$ are homologous to each other in $C_s(M,P_\eps;\V)$, they are also homologous in $C_s(M,M_{\eps};\V)$. It follows that $q_s([\beta])=0$ hence $q_s=0$.
\end{proof}

\begin{remark}
In the first case of the above theorem, the unbounded components of $M_{\eps}$ are often known to be cuspidal. For instance, this occurs if $X$ admits a cocompact lattice of isometries and $M$ is geometrically finite. However, in the general case we are not aware that they need be cuspidal (in the sense above).
\end{remark}

\section{Cohomological Dimension}\label{sec:cohomdim}

In this section, we derive our cohomological results. As noted in the introduction, we will employ the natural flow with Morse theory and dynamics (e.g. Patterson--Sullivan theory).

\subsection{Cohomological Dimension}

By a result of Eilenberg--Ganea (see Thm 7.1 and Cor 7.2 of \cite[Ch VIII]{Brown12}), we have for any group $\Ga$ that $\cd(\Ga)=\cd_{geom}(\Ga)$, except possibly when $\cd(\Ga)=2$, where $\cd_{geom}(\Ga)$ is the smallest dimension of any $K(\Ga,1)$ CW--complex. However, the inequality $\cd(\Ga)\leq\cd_{geom}(\Ga)$ always holds since the standard cellular chain complex gives rise to the projective resolution (as $\mathbb Z\Ga$--module) of $\mathbb Z$. (See Prop 2.2 of \cite[Ch VIII]{Brown12}.)

\subsection{Morse Theory for proper functions}

Recall that given a $C^2$--smooth function $f\colon M\to \R$, the Hessian of $f$ at $p$ is given by $\op{Hess}_p(f)=(\nabla^2f)_p$, that is,
\[ \op{Hess}_p(f)(v,w)=\nabla^2f(v,w)=WV(f)_p-(\nabla_W V)(f)_p=VW(f)_p-(\nabla_VW)(f)_p, \]
where $V,W$ are any smooth vector fields extending vectors $v,w\in T_pM$. In particular, it is a symmetric $2$--tensor that is independent of the choice of extending fields of $v,w$.

We call a function $f$ \textbf{Morse} if all of its critical points are non-degenerate, meaning that its Hessian at each critical point $p$ has full rank as a bilinear form. Non-degenerate critical points are always isolated. Ordinary Morse theory has a natural extension to a noncompact $C^2$--Riemannian manifold under certain proper conditions. A Morse function $f$ is called \textbf{proper} if the preimage of compact sets are compact. It is classical \cite{Milnor} that any smooth function can be ($C^2$) perturbed to be Morse. As is common, for proper functions we may also require that after the perturbation the level sets of $f$ contain at most one critical point. We adopt the following standard notation: for any set $A\subset \R$, set $M^{A}=f^{-1}(A)$ and for $a\in \R$ set $M^a=M^{(-\infty,a]}$. A Morse function detects the topology of the manifold. If $M$ is $n$--dimensional, then for any regular values $a<b$, the sublevel set $M^b$ is obtained from $M^a$ by a finite succession of smooth handle attachments of $D^{k_i}\times D^{n-k_i}$, where $k_1,\dots,k_r$ are the indices of the critical points in $M^{[a,b]}$ ordered by value, and $D^k$ denotes the open unit disk in $\R^k$. We recall the following result from \cite[Thm 3.5]{Milnor}:

\begin{theorem}\label{thm:Morse-theory}
If $f$ is a Morse function on a manifold $M$ (possibly noncompact), and if each $M^a$ is compact, then $M$ has the homotopy type of a CW--complex with one cell of dimension $d$ for each critical point of index $d$.
\end{theorem}

As a consequence, we have,

\begin{theorem}\label{thm:cd_Morse}
If $f$ is a proper Morse function bounded below on a noncompact aspherical manifold $M$ with $\Ga=\pi_1(M)$ whose critical points have index at most $k$, then $\cd(\Ga)\leq k$.
\end{theorem}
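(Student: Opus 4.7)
The plan is to reduce this to the standard fact that $\cd(\Gamma) \leq \cd_{geom}(\Gamma)$, which is already recalled in the text just before the statement, by producing an explicit $K(\Gamma,1)$ CW--complex of dimension at most $k$.

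First I would verify that all sublevel sets of $f$ are compact. Since $f$ is bounded below by some constant $c \in \R$, for any $a \in \R$ we have $M^{a} = f^{-1}((-\infty,a]) = f^{-1}([c,a])$, and this is compact by properness of $f$. This puts us exactly in the setting where Theorem \ref{thm:Morse-theory} applies.

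Next, apply Theorem \ref{thm:Morse-theory}: $M$ has the homotopy type of a CW--complex $K$ having one cell of dimension $d$ for each critical point of $f$ of index $d$. By hypothesis every critical point has index at most $k$, so $K$ is a CW--complex of dimension at most $k$. Since $M$ is aspherical and $K \simeq M$, $K$ is also aspherical, and $\pi_1(K) \cong \pi_1(M) = \Gamma$. Hence $K$ is a $K(\Gamma,1)$ of geometric dimension at most $k$, so $\cd_{geom}(\Gamma) \leq k$. Combined with the inequality $\cd(\Gamma) \leq \cd_{geom}(\Gamma)$ recalled in the preceding subsection (coming from the cellular chain complex of the universal cover being a free resolution of $\Z$ over $\Z\Gamma$), we conclude $\cd(\Gamma) \leq k$.

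There is essentially no obstacle here; the argument is a direct application of Morse theory on noncompact manifolds once the compactness of sublevel sets is observed. The only minor point to be careful about is that one really does get a finite-dimensional $K(\Gamma,1)$ rather than just a statement about relative homotopy type, but this follows immediately from the aspherical assumption on $M$. The real content of the theorem lies in its later use: combining it with a suitable proper Morse perturbation of the potential $\bar{f}(x) = -\log\|\mu_x\|$ whose Hessian is controlled by the critical index, which is where the conformal density hypotheses (and subexponential growth for properness/lower boundedness) will enter.
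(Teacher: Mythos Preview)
Your proposal is correct and follows essentially the same approach as the paper: both use the lower bound plus properness to ensure all $M^a$ are compact, invoke Theorem \ref{thm:Morse-theory} to obtain a CW--complex of dimension at most $k$ homotopy equivalent to $M$, and then use asphericality to conclude this is a $K(\Gamma,1)$ whose cellular chain complex bounds $\cd(\Gamma)$. The only cosmetic difference is that you phrase the last step via $\cd_{geom}(\Gamma)$ while the paper phrases it directly in terms of cellular homology with $\Z\Gamma$--module coefficients.
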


\begin{proof} Since $f$ is bounded below, $M^a$ is compact for any $a$. By Theorem \ref{thm:Morse-theory}, $M$ has the homotopy type of a CW--complex $X_f$ of dimension at most $k$. Moreover, since $M$ is aspherical, the homology of $\Ga$ with any given coefficients equals the homology of $M=B\Ga$ with a corresponding module of coefficients. By dimensionality of standard cellular homology with arbitrary $\Z \Gamma$--module coefficients, we have $\cd(\Ga)\leq\dim(X_f)\leq k$. 
\end{proof}

\begin{remark}\label{Rem:LowerBoundMorse}
The uniform lower bound of the Morse function is necessary. Indeed, a proper Morse function may not recover the topology by itself. If we consider the projection $\pi\colon N\times\R\rightarrow \R$ for an $(n-1)$--manifold $N$ then there are no critical points, but the topology (and even cohomology) may not be trivial. If we put a warped product metric on $N\times \R$ with convex $e^{\pm t}$, then the Hessian of the projection can be made everywhere positive semidefinite. Hence the index is $n-1$ but the cohomology does not vanish except on the top dimension. We always have the topology of $M^a$ to contend with, though we can take a colimit as $a\to \infty$. The corresponding cohomology is what we must start with.
\end{remark}

\subsection{Construction of the Morse function} 

We start with the $C^2$--smooth function on $M$ given by $f(x)=-\log\norm{\mu_x}$ as in Section \ref{sec:gradient_flow}, where $\mu_x$ is a $\delta$--conformal density on $X$ associated with $\Ga$. The Hessian was computed as
\begin{align*}
\nabla d{f}(x)&=\frac{\delta\cdot \int_{\pa_\infty X}\left( \nabla dB_\theta-\delta\cdot dB_\theta\otimes dB_\theta\right)(x) d\mu_x(\theta)}{\norm{\mu_x}}+\left(d{f}\otimes d{f}\right)(x).
\end{align*}

Taking the $k$--trace on both sides and use the super-additivity, we obtain for every $x\in M$
\begin{align}\label{eq:Hessian}
\tr_k(\nabla df)(x)&\geq \delta \frac{1}{\norm{\mu_x}} \int_{\pa_\infty X}\left(\tr_k\nabla dB_\theta(x)-\delta \right) d\mu_x(\theta) \geq \delta\cdot \left(\inf_{(x,\theta)\in X\times \partial_\mu X} \op{tr}_k \nabla dB_{(x,\theta)}-\delta\right).
\end{align}
Thus, when $k=j_X(\mu)$, according to the definition, we have $\tr_k(\nabla df)>0$ for all $x\in M$. It follows that the number of negative (or even nonpositive) eigenvalues of $\nabla df$ is bounded by 
$j_X(\mu)$. This shows that we have good index control for our potential function $f$. However, the function $f$ might not be proper and Morse. We will need the following result. (See \S 4 of \cite[Ch 26]{ChowEtAl10}) 
 
\begin{theorem}[{\cite[Thm 1]{Tam09}}]\label{thm:distance_like}
Let $M$ be a complete Riemannian manifold of dimension $n$ whose sectional curvature has bound $\abs{K}\leq \kappa$. Fix ${p}\in M$. Then there is a constant $C=C(n,\kappa)$ and a smooth function $g\colon M\to [0,\infty)$ such that
\begin{enumerate}
\item $d({p},x)+1\leq g(x)\leq d({p},x)+C$,
\item $\norm{\nabla g}\leq C$,
\item $\norm{\nabla d g}\leq C$.
\end{enumerate}
\end{theorem}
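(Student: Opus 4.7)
The plan is to smooth $r(x)=d(p,x)$ via a Greene--Wu type convolution in normal coordinates, treat a neighborhood of $p$ (where the Hessian of $r$ blows up like $1/r$) separately, and patch the two pieces via a smooth cutoff. The starting observation is that $r$ is $1$--Lipschitz, smooth off the cut locus $\op{Cut}(p)$ (which has measure zero), and that under the two-sided bound $\abs{K}\leq\kappa$ the Hessian comparison theorem gives a uniform estimate $\|\nabla^2 r(x)\|_{\mathrm{op}}\leq C_1(\kappa)$ for $x\in M\setminus\op{Cut}(p)$ with $r(x)\geq 1$: in this region $\nabla^2 r$ equals the shape operator of the geodesic sphere through $x$, whose principal curvatures are pinched between $-\sqrt{\kappa}$ and $\sqrt{\kappa}\coth(\sqrt{\kappa}\, r)$.

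For the smoothing, fix a radial mollifier $\phi$ supported in $B_\eps(0)\subset\R^n$ with $\int\phi=1$, where $\eps=\eps(\kappa)>0$ is chosen strictly smaller than the conjugate radius $\pi/\sqrt{\max(\kappa,0)}$. Define
\[ r_\eps(x)=\int_{T_xM} r(\exp_x(v))\,\phi(|v|)\,dv, \]
which is smooth because $\exp_x$ depends smoothly on $x$. Differentiating under the integral and using Rauch comparison to control the Jacobi fields appearing in $d\exp_x$ and $d^2\exp_x$ gives $\|\nabla r_\eps\|,\,\|\nabla^2 r_\eps\|\leq C_2(n,\kappa)$ on the region $\{r\geq 2\}$. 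Near $p$, the function $h(x)=\sqrt{r(x)^2+1}$ is smooth on the ball of radius $\pi/\sqrt{\max(\kappa,0)}$ about $p$ (where $r^2$ is smooth), and a direct computation yields $\|\nabla h\|\leq 1$ and $\|\nabla^2 h\|\leq C_3(\kappa)$, with the factor $\sqrt{r^2+1}$ absorbing the singularity at $r=0$.

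Finally, fix a smooth cutoff $\chi\colon\R\to[0,1]$ with $\chi\equiv 1$ on $(-\infty,1]$ and $\chi\equiv 0$ on $[2,\infty)$, and set
\[ g(x)=\chi(r_\eps(x))\,h(x)+\bigl(1-\chi(r_\eps(x))\bigr)\bigl(r_\eps(x)+1\bigr). \]
Using the smooth $r_\eps$ (rather than $r$) inside the cutoff guarantees global smoothness of $g$. Since $|r_\eps-r|\leq\eps$ pointwise, the three conclusions follow from the bounds on each piece via the product and chain rules, adjusting $C$ to absorb $\|\chi'\|_\infty$ and $\|\chi''\|_\infty$. The main obstacle is the Hessian control for $r_\eps$ across the cut locus of $p$: one cannot naively pass two derivatives through the integral. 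This is resolved by exploiting that the two-sided curvature bound makes $r$ both semi-concave and semi-convex away from $p$ in the viscosity sense with constants depending only on $\kappa$, so that the distributional Hessian of $r$ is uniformly bounded and convolution against $\phi$ produces a smooth function whose Hessian inherits the same bound.
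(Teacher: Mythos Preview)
The paper does not prove this statement; it is quoted from Tam's paper and used as a black box. So there is no ``paper's proof'' to compare against, and the question is simply whether your sketch stands on its own.

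The overall architecture (Greene--Wu convolution away from $p$, the function $\sqrt{r^2+1}$ near $p$, cutoff in between) is the right one. But the final sentence, where you claim to handle the Hessian of $r_\eps$ across the cut locus, contains a genuine error. You assert that under a two-sided bound $|K|\leq\kappa$ the distance function $r$ is both semi-concave \emph{and} semi-convex away from $p$, so that its distributional Hessian is a bounded tensor. Semi-concavity is correct (lower curvature bound plus Hessian comparison), but semi-convexity is false. On the flat torus $\R^2/\Z^2$ with $p=0$, along the cut segment $\{x=\tfrac12\}$ the function $r$ equals the minimum of two smooth branches whose $x$-derivatives are $+1$ and $-1$; the distributional $\partial_x^2 r$ carries a negative Dirac mass there. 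More generally, at any ordinary cut point where two distinct minimizing geodesics meet, $r$ looks locally like a minimum of smooth functions and is therefore semi-concave but not semi-convex, regardless of curvature bounds. So the distributional Hessian of $r$ is \emph{not} uniformly bounded below, and your inheritance argument for $\nabla^2 r_\eps$ fails as written.

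The fix does not use any Hessian information on $r$ at all. One differentiates $r_\eps(x)=\int_{T_xM} r(\exp_x v)\,\phi(|v|)\,dv$ twice in $x$ by parallel-transporting $v$ along a curve through $x$; after one integration by parts in the fiber variable $v$ (moving a derivative onto $\phi$), the expression for $\nabla^2 r_\eps$ involves only $\nabla r$ together with first and second variations of the exponential map, all of which are controlled by Jacobi field estimates under $|K|\leq\kappa$. This yields $\|\nabla^2 r_\eps\|\leq C(n,\kappa)\,\mathrm{Lip}(r)/\eps$, and since $\eps$ is fixed in terms of $\kappa$ (below the conjugate radius) the bound is of the required form. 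This is essentially the argument in Tam's paper; your sketch should replace the incorrect semi-convexity step by this integration-by-parts mechanism.
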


\begin{corollary}\label{cor:perturb_Morse}
Suppose $\norm{\mu_x}$ (viewed as a function $M\rightarrow \R^+$) has subexponential growth. Then for any $\eps>0$, there exists a proper Morse function $f_\eps\colon M\to \R$, such that 
\begin{enumerate}
\item $f_\epsilon(x)$ is uniformly bounded from below, and
\item $\norm{\nabla df_\eps-\nabla df}(x)<\eps$ for all $x\in M$.
\end{enumerate}
\end{corollary}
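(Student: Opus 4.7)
The plan is to construct $f_\eps$ in two steps. In the first step, I would add a small multiple of the distance-like function from \cref{thm:distance_like} to $f$ in order to secure properness and a uniform lower bound while only slightly perturbing the Hessian. In the second step, I would further $C^2$-perturb to make the function Morse.

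For the first step, apply \cref{thm:distance_like} to obtain a smooth function $g\colon M\to [0,\infty)$ with constant $C = C(n,\kappa)$ satisfying the three listed properties, and set $\tilde f := f + \lambda g$ with $\lambda\in(0,\eps/(2C))$. Property (3) of $g$ immediately yields $\norm{\nabla d\tilde f - \nabla df} \leq \lambda C < \eps/2$. Using the subexponential growth hypothesis, pick $\eta \in (0,\lambda)$ and $C(\eta)>0$ with $\norm{\mu_x} \leq C(\eta)\, e^{\eta d(x,p)}$ for all $x\in M$, so that $f(x) \geq -\log C(\eta) - \eta d(x,p)$. Combining this with property (1) of $g$ gives
\[ \tilde f(x) \geq (\lambda - \eta) d(x,p) + \lambda - \log C(\eta), \]
which is bounded below and tends to $+\infty$ as $d(x,p) \to \infty$. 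Since $M$ is complete, this shows $\tilde f$ is proper and uniformly bounded below.

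For the second step, I would invoke the classical density theorem (see e.g.\ Hirsch's \emph{Differential Topology}) that Morse functions are $C^2$-dense in $C^2(M,\R)$ in the strong (Whitney) topology, producing a $C^2$ function $\phi\colon M\to\R$ with $\norm{\phi}_{C^0} < 1$ and $\norm{\nabla d\phi} < \eps/2$ pointwise such that $f_\eps := \tilde f + \phi$ is Morse. A further arbitrarily small shift, adding distinct constants in disjoint neighborhoods of each (isolated) critical point, ensures that every level set of $f_\eps$ contains at most one critical point. The $C^0$-bound on $\phi$ preserves both the properness and the uniform lower bound established for $\tilde f$, while combining the two Hessian estimates yields $\norm{\nabla df_\eps - \nabla df} < \eps$ pointwise, as required.

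The main obstacle is ensuring that the Morse perturbation $\phi$ can indeed be chosen with these uniform $C^0$ and $C^2$ bounds on the noncompact manifold $M$. This is handled by the standard transversality argument: critical points of a $C^2$ function correspond to intersections of its $1$-jet section $j^1\tilde f\colon M\to T^*M$ with the zero section, and Morse-ness corresponds to transversality of this intersection. Using a locally finite coordinate chart cover together with a compact exhaustion $K_1\subset K_2\subset\cdots$, one inductively constructs compactly-supported perturbations in each $K_i\setminus K_{i-1}$ achieving transversality there; since non-degeneracy is an open $C^2$ condition, each successive increment can be chosen small enough to preserve Morse-ness on all prior $K_{i-1}$, while the total perturbation remains uniformly bounded in both $C^0$ and $C^2$ by the prescribed thresholds.
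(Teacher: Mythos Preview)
Your proof is correct and follows essentially the same two-step approach as the paper: add a small multiple of the distance-like function $g$ from \cref{thm:distance_like} to obtain properness and a lower bound with Hessian perturbed by at most $\eps/2$, then invoke the $C^2$-density of Morse functions in the strong topology (citing Hirsch) for the remaining $\eps/2$. The paper simply fixes $\lambda=\eps/(2C)$ and $\eta=\eps/(4C)$ rather than leaving them as parameters, and it does not spell out the transversality/exhaustion argument you include in your final paragraph, but otherwise the arguments coincide.
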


\begin{proof}
Since $\norm{\mu_x}$ has subexponential growth, we know for any $\eta>0$
\[f(x)\geq -\eta \cdot d({p},x)\]
when $d({p},x)$ is large enough. Now we perturb $f(x)$ by setting
\[f'_\epsilon(x):=f(x)+\frac{\epsilon}{2C}\cdot g(x),\]
where $g(x)$ and $C$ are as in Theorem \ref{thm:distance_like}. By setting $\eta=\epsilon/4C$, we have that
\[f'_\epsilon(x)\geq f(x)+\frac{\eps}{2C} \cdot d({p},x)\geq \frac{\eps}{4C}\cdot d({p},x),\]
when $d({p},x)$ is large enough. Thus, $f_\epsilon'$ has a uniform lower bound and is proper. Moreover, we can estimate the Hessian
\[\norm{\nabla df'_\eps-\nabla df}=\norm{\frac{\eps}{2C}\cdot \nabla dg}\leq \frac{\eps}2.\]
Finally, since Morse functions are dense \cite[Ch 6, Thm 1.2]{Hirsch} (in the strong topology) in $C^2(M,\mathbb R)$, we can choose a $C^2$--perturbation $f_\eps$ of $f'_\eps$ so that $f_\eps$ is Morse which remains proper and uniformly bounded from below, whose Hessian satisfies $\norm{\nabla df_\eps-\nabla df}<\eps$.
\end{proof}
 
\begin{reptheorem}{thm:cd}
If $X$ is a Hadamard space and $\Ga<\op{Isom}(X)$ is a discrete, torsion-free subgroup with a conformal density $\mu$ such that $\norm{\mu_x}$ has subexponential growth, then $\cd(\Ga)\leq j_X(\mu)-1$.
\end{reptheorem}

\begin{proof} Let $k=j_X(\mu)$ and $\delta$ be the dimension of the conformal density $\mu$. Set
\[\eps_0=\inf_{(x,\theta)\in X\times \partial_\mu X} \op{tr}_k\nabla dB_{(x,\theta)}-\delta>0.\]
By Equation \eqref{eq:Hessian} we have $\tr_k \nabla d f\geq \delta \epsilon_0$. By Corollary \ref{cor:perturb_Morse}, there exists a proper Morse function $f_\eps$ such that
\[\tr_k\nabla d f_\eps \geq \frac{\delta\eps_0}2>0.\]
Thus the index of $f_\eps$ satisfies $ i(f_\eps)\leq k-1$ at every critical point. By Theorem \ref{thm:cd_Morse}, we obtain $\cd(\Ga)\leq j_X(\mu)-1$.
\end{proof}

\subsection{Subexponential growth of \texorpdfstring{$||\mu_x||$}{}}

We investigate certain situations where $||\mu_x||$ has subexponential growth.
For the rest of this section, we assume $X$ has pinched negative curvature, and for simplicity we normalize such that $K\leq -1$.

\subsubsection{Geometrically finite subgroups}

First we observe that for a convex cocompact subgroup $\Ga<\op{Isom}(X)$, the norm of the unique $\delta(\Ga)$--conformal density $\mu_x$ is always uniformly bounded. To see this, we look at $\nabla f$ where $f=-\log||\mu_x||$ as before. As was computed in Equation \eqref{eq:df}
\[\nabla { f}(x)=\frac{\delta\cdot \int_{\pa_\infty X}\nabla B_\theta(x) d\mu_x(\theta)}{\norm{\mu_x}}.\]
Geometrically, $\nabla B_\theta(x)$ is the unit vector at $x$ pointing in the opposite direction of $\theta$. When $x$ is \emph{not} in the convex hull of the limit set, this direction points towards leaving the convex hull as $\mu_x$ supports on the limit set. Taking the average, we still have that $\nabla f$ is leaving the convex hull. This shows that $f(x)\geq f(P(x))$, where $P(x)$ is the nearest-point projection of $x$ to the convex hull. In other words, we have $||\mu_x||\leq ||\mu_{P(x)}||$. On the other hand, by the cocompactness of the $\Ga$--action on the convex hull, $||\mu_x||$ is uniformly bounded on the convex hull. Thus, $||\mu_x||$ is uniformly bounded in the entire $M$.

The argument almost works for general geometrically finite subgroups, except that the convex hull is now finite volume and $||\mu_x||$ might possibly increase exponentially fast when going into a cusp. Indeed, this must happen (according to Theorem \ref{thm:cd}) when the inequality $\cd(\Ga)\leq j_X(\Ga)-1$ fails. For example, when $\Ga$ is a Schottky-type Kleinian group that is generated by a parabolic element of large rank together with a hyperbolic element of a large translation.

\subsubsection{Bowen--Margulis measures}

Given any oriented geodesic on $M$, it lifts to an oriented geodesic on $X$ up to a $\Ga$--action. Any oriented geodesic on $X$ naturally identifies to an ordered pair of distinct points on $\partial_\infty X$. This gives a one-to-one correspondence
\[\{\text{geodesics on $M$\}}\cong \left((\partial_\infty X\times \partial_\infty X)\setminus \Delta\right)/\Ga,\]
where $\Delta=\{(\theta,\theta)\,:\,\theta\in \partial_\infty X\}$ is the diagonal set. Similarly, the unit tangent bundle $T^1 M$ is naturally identified with $\bigl(((\partial_\infty X\times \partial_\infty X)\setminus \Delta)\times \mathbb R\bigr)/\Ga$. Such an identification is quite useful when investigating $g^t$ (geodesic flow) invariant measures on $T^1M$, since we have
\[\{g^t\text{-invariant measures on }T^1M\}\cong \{\Ga\text{-invariant measures on }(\partial_\infty X\times \partial_\infty X)\setminus \Delta\}.\]

Given any $\delta$--conformal density $\{\mu_x\}$ where $\delta=\delta(\Ga)$ (e.g. using the Patterson--Sullivan construction), we can always associate it with a $\Ga$--invariant measure $U$ on $(\partial_\infty X\times \partial_\infty X)\setminus \Delta$, given by
\[dU(\xi,\eta) = e^{\delta\beta_x(\xi,\eta)} d\mu_x(\xi)d\mu_x(\eta),\]
where $\beta_x(\xi,\eta)=B_\xi(x,z)+B_\eta(x,z)$ for any $z\in [\xi,\eta]$ denotes the Busemann cocycle. Note that the definition of $U$ is independent on $x$, and it is clear that it is $\Ga$--invariant. Thus, it defines a unique $g^t$--invariant measure $\nu$ on $T^1M$, called the \textbf{Bowen--Margulis measure} associated with $\{\mu_x\}$, given by $d\nu(\xi,\eta,t) =dU(\xi,\eta)dt$, under the identification $T^1M\cong \bigl(((\partial_\infty X\times \partial_\infty X)\setminus \Delta)\times \mathbb R\bigr)/\Ga$.

For a general infinite volume discrete subgroup, the Bowen--Margulis measure might be an infinite measure. However, if $\nu(T^1M)<\infty$, then the quotient manifold will have nice dynamical properties. It is shown in \cite[\S 5]{Yue96} that, if the Bowen--Margulis measure is finite, then the geodesic flow $g^t$ is ergodic and conservative. It follows that \cite[\S 3]{Yue96} $\Ga$ is of divergence type and $\{\mu_x\}$ is the unique $\delta(\Ga)$--conformal density on $M$ which is non-atomic. In the same paper, he also gave a sufficient condition \cite[Thm 5.4.3]{Yue96} for the finiteness of Bowen--Margulis measure.

More generally, Pit--Schapira \cite{PitSchapira18} gave three sufficient conditions as well as necessary conditions for any Gibbs measure to be finite and also recover the necessary and sufficient condition given by Dal'bo--Otal--Peign\'e in \cite{DalboEtAl00} in the geometrically finite case. 

We now relate the finiteness of Bowen--Margulis measure to the cohomological dimension.

\begin{reptheorem}{thm:finiteness criterion}
 Suppose $X$ has pinched negative curvature and $M=X/\Gamma$ is a manifold whose injectivity radius is bounded away from zero. If $\nu(T^1M)<\infty$, then the corresponding unique $\delta(\Ga)$--conformal density $\mu_x$ has uniformly bounded norm on $M$.
\end{reptheorem}

Combining Theorem \ref{thm:finiteness criterion} with Theorem \ref{thm:cd}, we obtain the following corollary.

\begin{repcorollary}{cor:CDBound}
Suppose $X$ has pinched negative curvature and $M=X/\Gamma$ is a manifold whose injectivity radius is bounded away from zero. If $\nu(T^1M)<\infty$, then $\op{cd}(\Ga)\leq j_X(\mu)-1$, 
where $\mu$ is the unique $\delta(\Ga)$--conformal density associated to $\nu$. Furthermore, if $X$ is normalized to be $K\leq -1$, then $\op{cd}(\Ga)\leq \floor{\delta(\Ga)}+1$.
\end{repcorollary}

\begin{proof}
The first part follows from Theorem \ref{thm:finiteness criterion} and Theorem \ref{thm:cd}, and the second part follows from the fact that $j_X(\mu)\leq \floor{\delta(\Ga)}+2$ as in the proof of \cref{T:MainJac}.
\end{proof}

The rest of the section is devoted to the proof of Theorem \ref{thm:finiteness criterion}.

\subsubsection{Uniformly exponentially bounded measures}

We now consider the lift of the Bowen--Margulis measure $\nu$ to the universal cover $T^1X$. The resulting measure is $\Ga$--invariant and that we denote by $\nu$ for simplicity.

\begin{definition}\label{def:UEB}
We say the Bowen--Margulis measure $\nu$ on $T^1M$ is \textbf{uniformly exponentially bounded} if there exists a constant $C>0$ such that $\nu$ satisfies
\[\nu(T^1(B(x,R)))\leq C e^{C R}\]
for any $x\in X$ and $R>0$, and $B(x,R)$ is the radius $R$ metric ball on the universal cover $X$ of $M$.
\end{definition}

The following lemma gives a class of manifolds on which the Bowen--Margulis measures are uniformly exponentially bounded.

\begin{lemma}\label{lem:counting}
If the injectivity radius of $M$ is uniformly bounded from zero and $\nu(T^1M)<\infty$, then $\nu$ is uniformly exponentially bounded.
\end{lemma}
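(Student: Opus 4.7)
The plan is to reduce the bound on $\nu(T^1 B(x,R))$ to two ingredients: a uniform upper bound on $\nu(T^1 B(y,\epsilon_0))$ for a single small ball, and a volume-comparison argument counting how many such small balls are needed to cover $B(x,R)$.

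First, let $\epsilon_0>0$ be a uniform lower bound on the injectivity radius of $M$. For any $y\in X$, the covering projection $\pi\colon X\to M$ restricted to $B(y,\epsilon_0)$ is an isometric embedding, so $T^1 B(y,\epsilon_0)$ is mapped injectively into $T^1 M$. Since the lift of $\nu$ to $T^1 X$ is $\Gamma$-invariant and agrees with $\nu$ on any such injectivity region, we obtain $\nu(T^1 B(y,\epsilon_0))\leq \nu(T^1 M)=:V<\infty$, and this bound is independent of $y$.

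Next I would cover $B(x,R)$ by finitely many balls of radius $\epsilon_0$. Choose a maximal $\epsilon_0$-separated set $\{y_1,\dots,y_N\}\subset B(x,R)$; then the balls $B(y_i,\epsilon_0)$ cover $B(x,R)$, while the balls $B(y_i,\epsilon_0/2)$ are pairwise disjoint and contained in $B(x,R+\epsilon_0/2)$. The upper sectional curvature bound $K\leq 0$ gives (via Rauch/Bishop--G\"unther comparison with Euclidean space) a uniform lower bound $\vol(B(y_i,\epsilon_0/2))\geq c_n(\epsilon_0/2)^n$, while the lower bound $K\geq -a^2$ gives (via Bishop--Gromov comparison with the space of constant curvature $-a^2$) an upper bound $\vol(B(x,R+\epsilon_0/2))\leq C_1 e^{(n-1)a(R+\epsilon_0/2)}$. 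Dividing these gives $N\leq C_2 e^{(n-1)aR}$ for a constant $C_2$ depending only on $n$, $a$, and $\epsilon_0$.

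Combining these two steps yields
\[
\nu(T^1 B(x,R))\leq \sum_{i=1}^{N}\nu(T^1 B(y_i,\epsilon_0))\leq N\cdot V\leq C_2 V\, e^{(n-1)aR},
\]
which is exactly the uniform exponential bound (with $C=\max(C_2 V,(n-1)a)$) demanded by Definition \ref{def:UEB}. The argument is essentially a packing/covering estimate, so no single step is hard; the only place that requires care is verifying that the lift of $\nu$ to $T^1 X$ genuinely assigns any embedded $\epsilon_0$-ball measure at most $\nu(T^1 M)$, which is why the injectivity radius hypothesis is used precisely where it is.
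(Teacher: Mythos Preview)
Your proof is correct and follows essentially the same approach as the paper: bound $\nu(T^1B(x,R))$ by (number of pieces injecting into $M$) $\times\ \nu(T^1M)$, then control the number of pieces by a volume--packing argument using the pinched curvature bounds and the injectivity radius. The only cosmetic difference is that the paper counts Dirichlet fundamental domains intersecting $B(x,R)$ (using that orbit points are $2\epsilon_0$--separated), whereas you cover $B(x,R)$ by embedded $\epsilon_0$--balls; both lead to the same exponential estimate.
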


\begin{proof}
Denote $\pi\colon X\rightarrow M$ the projection map. Given any $x\in X$ and $R>0$, we look at the number $N(x,R)$ of Dirichlet fundamental domains in $X$ that intersect with $B(x,R)$. For any Dirichlet fundamental domain parameterized by an element $\ga\in\Ga$, that is, 
\[\mathfrak F_\ga:=\{y\in X\,:\,d(y,\ga x)\leq d(y,g x)\text{ for any }g\in \Ga\},\]
if it intersects with $B(x,R)$, then by triangle inequality $d(x,\ga x)\leq 2R$. Denote $\epsilon_0>0$ a lower bound of the injectivity radius, then we know that $\bigcup_\ga B(\ga x,\epsilon_0)$ are disjoint unions of metric balls in $X$. Furthermore, if $\mathfrak F_\ga$ intersects with $B(x,R)$, then $\ga x\in B(x,2R)$ hence $B(\ga x,\epsilon_0)\subset B(x,2R+\epsilon_0)$. Since $X$ has pinched negative curvature, there exists a uniform constant $C_0$ (independent of $x$) such that $\vol(B(x,\epsilon_0))\geq C_0$. Therefore, we have
\begin{align*}
N(x,R)\cdot C_0&\leq \sum_{\{\ga\,:\, \mathfrak F_\ga \text{ intersects with }B(x,R)\}}\vol(B(\ga x,\epsilon_0))\\
&\leq \vol(B(x,2R+\epsilon_0)) \leq C_1 e^{C_2(2R+\epsilon_0)},
\end{align*}
for some $C_1, C_2$ that only depends on $X$. Thus,
\[\nu(T^1 (B(x,R)))\leq N(x,R)\cdot \nu(T^1M)\leq \frac{C_1}{C_0}\cdot e^{C_2(2R+\epsilon_0)}\cdot \nu(T^1M).\]
 It follows that $\nu$ is uniformly exponentially bounded.
\end{proof}

\subsubsection{Estimate of gradient norm}

We intend to understand how $||\mu_x||$ varies on $M$ by understanding how it increases along its gradient field. We prove that, once $||\mu_x||$ exceeds a threshold value, then it will continue to increase to infinity exponentially fast and with the maximal exponential growth rate $\delta$. This is done via an ordinary differential inequality.

\begin{lemma}(Small shadow)\label{lem:shadow}
If $\nu$ is uniformly exponentially bounded, then for any $0<\beta<1$, there exists $\eta=(1-\beta)/C>0$ such that for any $x\in X$ with $\norm{\mu_x}>e^{1/\eta}$, we have
\[\inf_{\xi\in \partial_\infty X} \mu_{x}(Sh_{\xi}(B( x,R)))\leq \frac{C}{\norm{\mu_x}^\beta}\]
where $R=\eta\cdot \log\norm{\mu_x}$ and $C$ is the constant as in Definition \ref{def:UEB} and only depends on $M$.
\end{lemma}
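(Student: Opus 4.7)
The approach is by contradiction: suppose that for some $x\in X$ with $\norm{\mu_x}>e^{1/\eta}$ (with $\eta$ to be chosen) we have $\mu_x(Sh_\xi(B(x,R)))>C\norm{\mu_x}^{-\beta}$ for every $\xi\in\partial_\infty X$, and extract a contradiction by estimating the Bowen--Margulis measure of $T^1B(x,2R)$ in two ways.

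The key input is the product decomposition of the lifted Bowen--Margulis measure at the basepoint $x$: under the identification $T^1X\cong(\partial_\infty X\times \partial_\infty X\setminus\Delta)\times \R$, we have $d\nu(\xi,\eta,t)=e^{\delta\beta_x(\xi,\eta)}d\mu_x(\xi)\,d\mu_x(\eta)\,dt$, hence
\[ \nu(T^1 B(x,2R))=\int \ell(\xi,\eta)\,e^{\delta\beta_x(\xi,\eta)}\,d\mu_x(\xi)\,d\mu_x(\eta), \]
where $\ell(\xi,\eta)$ is the length of the geodesic $(\xi,\eta)$ intersected with $B(x,2R)$. For any $\xi$ and any $\eta\in Sh_\xi(B(x,R))$, letting $z^\ast\in[\xi,\eta]$ be the nearest point to $x$, we have $d(x,z^\ast)\leq R$ and consequently two elementary estimates: $\ell(\xi,\eta)\geq 2\sqrt{3}\,R$ by $\CAT(0)$ comparison with a chord of the ball of radius $2R$ passing within distance $R$ of the center, and $\beta_x(\xi,\eta)=B_\xi(x,z^\ast)+B_\eta(x,z^\ast)\geq -2R$ using that each Busemann function is $1$-Lipschitz. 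Restricting the integral to such pairs and invoking the contrapositive hypothesis gives
\[ \nu(T^1 B(x,2R))\;\geq\; 2\sqrt{3}\,R\,e^{-2\delta R}\int \mu_x(Sh_\xi(B(x,R)))\,d\mu_x(\xi) \;\geq\; 2\sqrt{3}\,C\,R\,e^{-2\delta R}\,\norm{\mu_x}^{1-\beta}. \]

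On the other hand, uniform exponential boundedness provides $\nu(T^1B(x,2R))\leq Ce^{2CR}$, and combining the two bounds with $R=\eta\log\norm{\mu_x}$ yields
\[ \norm{\mu_x}^{1-\beta-(2C+2\delta)\eta}\;\lesssim\;\frac{1}{R}. \]
For any $\eta$ strictly less than $(1-\beta)/(2C+2\delta)$ the exponent on the left is positive, so once $\norm{\mu_x}$ exceeds some threshold the left side exceeds any fixed constant while the right side decays, giving the required contradiction. The main technical care is bookkeeping: collecting all geometric constants (the factor $\sqrt 3$, the dimension $\delta$ of the conformal density, and the UEB constant) into a single constant $C$ depending only on $M$, so that the threshold condition becomes precisely $\norm{\mu_x}>e^{1/\eta}$ with $\eta=(1-\beta)/C$ as stated; this is the main place where the proof requires care, since the geometric inputs themselves are routine (the $\CAT(0)$ chord bound and the Lipschitz property of Busemann functions hold in every Hadamard manifold).
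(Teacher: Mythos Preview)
Your proof is correct and follows essentially the same strategy as the paper: bound $\nu(T^1 B(x,2R))$ above by the uniform exponential bound and below via the product formula restricted to pairs $(\xi,\eta)$ with $\eta\in Sh_\xi(B(x,R))$, then compare. The paper argues directly rather than by contradiction, but that is cosmetic.

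There is one place where you give away more than needed. You estimate $\beta_x(\xi,\eta)\geq -2R$ via the Lipschitz property of Busemann functions, which costs you the factor $e^{-2\delta R}$ and forces $\delta$ into the bookkeeping. In fact $\beta_x(\xi,\eta)\geq 0$ always: the function $y\mapsto B_\xi(y,z)+B_\eta(y,z)$ is convex on $X$, has vanishing gradient along the geodesic $[\xi,\eta]$, and is constant (equal to $0$) there, so it attains its global minimum $0$ on $[\xi,\eta]$. Using this, the exponential weight satisfies $e^{\delta\beta_x(\xi,\eta)}\geq 1$ and the lower bound simplifies to
\[
\nu(T^1B(x,2R))\;\geq\;\int_{\partial_\infty X}\mu_x(Sh_\xi(B(x,R)))\,d\mu_x(\xi)\;\geq\;\inf_\xi \mu_x(Sh_\xi(B(x,R)))\cdot\norm{\mu_x},
\]
after using $T_2-T_1\geq 2\sqrt{3}R\geq 1$ (this is where the hypothesis $\norm{\mu_x}>e^{1/\eta}$, i.e.\ $R>1$, is used). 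Combining with $\nu(T^1B(x,2R))\leq Ce^{CR}=C\norm{\mu_x}^{C\eta}$ gives the stated bound immediately with $\eta=(1-\beta)/C$, and no further absorption of constants is required.
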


\begin{proof} 
Consider $B(x,R)\subset B(x,2R)$. Since $\nu$ is uniformly exponentially bounded, we have
\[\nu(T^1(B(x,2R)))\leq C e^{CR}.\]
On the other hand, if we denote $A(R)\subset T^1(B(x,2R))$ the set of all unit vectors whose corresponding geodesics intersect with $B(x,R)$, that is,
\[A(R)=\{v\in T^1(B(x,2R)): g^t(v)\cap T^1(B(x,R))\neq \emptyset \text{ for some } t\in \R\},\]
then we can estimate
\begin{align*}
\nu(T^1(B(x,2R)))&\geq \nu(A(R)) \geq\int_{\pa_\infty X} \int_{Sh_\xi(B(x,R))}\left(\int_{T_1}^{T_2} e^{\delta \beta_{x}(\xi,\eta)}dt\right)d\mu_{x}(\eta)d\mu_{x}(\xi)\\
&\geq \int_{\pa_\infty X} \int_{Sh_\xi(B(x,R))} (T_2-T_1)d\mu_{x}(\eta)d\mu_{x}(\xi)\\
& \geq \int_{\pa_\infty X}\mu_{x}(Sh_{\xi}(B(x,R))) d\mu_{x}(\xi) \geq \inf_{\xi\in \partial_\infty X} \mu_{x}(Sh_{\xi}(B(x,R)))\cdot \norm{\mu_{x}},
\end{align*}
where in the second inequality, $T_1, T_2$ are the time variable of the geodesic $\xi\eta$ entering and leaving $B(x, 2R)$. As the geodesic intersects $B(x,R)$, we have $T_2-T_1\geq 2\sqrt 3 R\geq 1$ by assumption. This yields the fourth inequality. We also used $\beta_x(\xi,\eta)\leq 0$ in the third equality. Combining the above two inequalities gives us
\[\inf_{\xi\in \partial_\infty X} \mu_{\til x}(Sh_{\xi}(B(\til x,R)))\leq \frac{C}{\norm{\mu_x}^{1-C\eta}}.\]
Thus, by choosing $\eta=(1-\beta)/C$, the lemma follows.
\end{proof}

\begin{lemma}\label{lem:gradient-norm}
Suppose $\nu$ is uniformly exponentially bounded. Denote the function $f(x)=\norm{\mu_x}$. Then there exist constants $\alpha\in (0,1)$, $D(M)>0$ that depend only on $M$ such that if $f(x)\geq D(M)$, then
\[\norm{\nabla f}(x)\geq \delta f(x)-f^\alpha(x).\]
In particular, if the initial condition $f(x_0)$ is large enough for some $x_0\in M$, then under the gradient flow (integration of the $C^1$--smooth field $\nabla f$) (for $t>0$), the trajectory does not hit any critical point and will escape to infinity.
\end{lemma}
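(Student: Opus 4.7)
The plan is to combine the explicit formula
\[ \nabla f(x) = \delta \int_{\partial_\infty X} v_{x\theta}\, d\mu_x(\theta) \]
(which follows from $f=e^{-\bar f}$ together with \eqref{eq:df}, writing $v_{x\theta}:=-\nabla B_\theta(x)$ for the unit vector at $x$ pointing toward $\theta$) with the mass concentration provided by Lemma \ref{lem:shadow}. The latter shows that when $\norm{\mu_x}$ is large, almost all of the $\mu_x$-mass is concentrated in a small visual neighborhood of a specific boundary point $\xi_0$, so the unit vectors $v_{x\theta}$ point essentially in the common direction $u:=v_{x\xi_0}$, and $\norm{\nabla f(x)}$ is therefore forced to be close to the maximal possible value $\delta f(x)$.

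Concretely, I would fix $\beta\in(0,1)$, set $\eta=(1-\beta)/C$ and $R=\eta\log\norm{\mu_x}$ as in Lemma \ref{lem:shadow}, and let $\xi_0$ be the boundary point supplied by that lemma, so that $\mu_x(S)\leq C\norm{\mu_x}^{-\beta}$ for $S:=Sh_{\xi_0}(B(x,R))$. A point $\theta$ lies in $U:=\partial_\infty X\setminus S$ exactly when the geodesic from $\xi_0$ to $\theta$ stays at distance strictly greater than $R$ from $x$; by standard $\CAT(-1)$-trigonometry (relating this distance to the Gromov product $(\xi_0|\theta)_x$ and then to the visual angle) this translates, with constants depending only on the pinching, to $\angle_x(v_{x\xi_0},v_{x\theta})\leq C' e^{-R}=C'\norm{\mu_x}^{-\eta}$ on $U$, hence $\langle v_{x\theta},u\rangle\geq 1-C_1\norm{\mu_x}^{-2\eta}$ there. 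Using $\norm{\nabla f(x)}\geq \langle\nabla f(x),u\rangle$ and splitting the defining integral over $U$ and $S$ (bounding $\langle v_{x\theta},u\rangle\geq -1$ on $S$) yields
\[ \norm{\nabla f(x)}\geq \delta(1-C_1\norm{\mu_x}^{-2\eta})(\norm{\mu_x}-C\norm{\mu_x}^{-\beta})-\delta C\norm{\mu_x}^{-\beta}. \]
After expansion, the dominant error term is of order $\norm{\mu_x}^{1-2\eta}$, so choosing any $\alpha\in(1-2\eta,1)$ and taking $D(M)$ large enough absorbs all constants and produces $\norm{\nabla f(x)}\geq \delta f(x)-f(x)^\alpha$ whenever $f(x)\geq D(M)$.

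For the dynamical consequence, enlarge $D(M)$ if necessary so that $\delta f-f^\alpha\geq \tfrac{\delta}{2}f$ on $\{f\geq D(M)\}$. Then along a gradient trajectory $\dot x(t)=\nabla f(x(t))$ starting at $x_0$ with $f(x_0)\geq D(M)$, one has
\[ \tfrac{d}{dt}f(x(t))=\norm{\nabla f(x(t))}^2\geq \tfrac{\delta^2}{4}f(x(t))^2, \]
so $f(x(t))$ is strictly increasing, stays above $D(M)$, and in fact blows up in finite time. Consequently $\norm{\nabla f(x(t))}$ is bounded away from zero for $t>0$, so no critical point is encountered; and since $f$ is finite on all of $M$, the trajectory necessarily exits every compact set. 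The main obstacle is the geometric step in paragraph two, namely producing the uniform angular estimate on $U$ from Lemma \ref{lem:shadow} with constants depending only on the geometry of $M$; once that is in place, the rest reduces to routine expansion and an ODE comparison.
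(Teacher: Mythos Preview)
Your proposal is correct and follows essentially the same route as the paper's own proof: compute $\nabla f$ via \eqref{eq:df}, pair it with the unit vector $u=v_{x\xi_0}$ toward the direction $\xi_0$ supplied by Lemma~\ref{lem:shadow}, split the integral over the shadow $S$ and its complement $U$, use the $\CAT(-1)$ angle estimate on $U$ (the paper phrases this as $\sin(\alpha/2)\leq 1/\cosh R$ via the hyperbolic law of cosines, which is the same bound as your $C'e^{-R}$), and collect the error terms. The only cosmetic differences are that the paper fixes the specific choices $\beta=1/2$, $\alpha=1-\eta$, and for the escape statement works with the unit-speed reparametrization of the flow together with an ODE comparison (Lemma~\ref{lem:ODE}) rather than your finite-time blow-up argument for the unnormalized flow; both reach the same conclusion.
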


\begin{proof}
We do the computations by lifting to the universal cover but by the $\Gamma$--equivariance, all the geometric quantities descends to the quotient manifold. We first compute the gradient,
\begin{equation}\label{eq:gradient}
\nabla f(x)=\delta \int_{\pa_\infty X}v_{x\theta} d\mu_x.
\end{equation}
By lemma \ref{lem:shadow}, for any $x\in M$, and any lift $\til x\in X$, there is $\xi\in \pa_\infty X$ that attains the infimum of the Patterson--Sullivan mass of the shadow. If $v$ is the unit vector from $\til x$ pointing towards $\xi$, then
\[\norm{\nabla f}(\til x)=\sup_{\theta\in \pa_\infty X}\langle\nabla f, v_{\til x\theta}\rangle_{\til x}\geq \langle\nabla f(\til x), v\rangle= \delta\int_{\pa_\infty X}\langle v_{\til x\theta}, v\rangle d\mu_{\til x}(\theta).\]
We write $\pa_\infty X$ as the disjoint union of $Sh_{\xi}(B(\til x,R))$ and its complement where $R=\eta\cdot \log f$ as in Lemma \ref{lem:shadow}. For each point $\theta$ in the shadow complement, the angle between $v$ and $v_{\til x\theta}$ is very small when $R$ is large. More precisely, by the hyperbolic law of cosine, we obtain that the angle $\alpha$ between $v$ and $v_{\til x\theta}$ has to satisfy
\begin{equation}\label{eq:angle}
\sin(\alpha/2)\leq \frac{1}{\cosh R},
\end{equation}
and so
\[\langle v_{\til x\theta}, v\rangle=1-2\sin^2(\alpha/2)\geq 1-\frac{2}{\cosh^2 R}\]
for $\theta\in Sh_{\xi}(B(\til x,R))^c$.
Now we can estimate the norm of the gradient,
\begin{align*}
\norm{\nabla f}(\til x)&\geq \delta\left(\int_{Sh_{\xi}(B(\til x,R))}\langle v_{\til x\theta}, v\rangle d\mu_{\til x}(\theta)+\int_{Sh_{\xi}(B(\til x,R))^c}\langle v_{\til x\theta}, v\rangle d\mu_{\til x}(\theta)\right)\\
&\geq \delta\left((-1)\cdot\mu_{\til x}(Sh_{\xi}(B(\til x,R)))+(1-\frac{2}{\cosh^2 R})\cdot \mu_{\til x}(Sh_{\xi}(B(\til x,R))^c)\right)\\
&\geq \delta\left(-\frac{C(M)}{\norm{\mu_x}^\beta}+(1-\frac{2}{\cosh^2 R})(\norm{\mu_x}-\frac{C(M)}{\norm{\mu_x}^\beta})\right)  \textrm{  (By Lemma \ref{lem:shadow})}\\
&\geq \delta \left(f-8f^{1-2\eta}-2C(M)f^{-\beta}+8C(M)f^{-(2\eta+\beta)}\right),
\end{align*}
where the last inequality uses $\cosh R\geq e^R/2$ and $R=\eta \log f$. Now we specify $\beta=1/2$ and $\eta=\frac{1}{2C}$ as in Lemma \ref{lem:shadow}. We can choose $\alpha=1-\eta$ and $D(M)$ large enough so that
\[f^\alpha\geq \delta(8f^{1-2\eta}+2C(M)f^{-\beta}),\]
whenever $f\geq D(M)$. Thus it follows that $\norm{\nabla f}\geq \delta f-f^\alpha$.
\end{proof}

\begin{lemma}\label{lem:ODE}
Let $y(t)$ be a positive function that satisfies the differential inequality
\[Cy\geq y'\geq C y-y^\alpha\]
for some constants $C>0$ and $0<\alpha<1$. Then
\[y(t)\geq \left(y(0)-\frac{y(0)^\alpha}{C(1-\alpha)}\right)e^{Ct}.\]
\end{lemma}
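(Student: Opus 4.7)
The plan is to treat the two-sided hypothesis as a Bernoulli-type differential inequality and to reduce it to a linear one via the integrating factor $e^{-Ct}$. The only real obstacle is converting the nonlinear term $y^\alpha$ in the lower bound into something explicit and integrable; the companion upper bound $y' \le Cy$ will supply precisely the a priori control needed for this.

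First I will exploit the upper inequality $y' \le Cy$. Dividing by the positive quantity $y$ and integrating gives $y(t) \le y(0)e^{Ct}$ for all $t \ge 0$. Since $0 < \alpha < 1$ and $x \mapsto x^\alpha$ is increasing on $[0,\infty)$, this yields the pointwise a priori bound
\[
y(t)^\alpha \le y(0)^\alpha e^{\alpha C t},
\]
which linearizes the nonlinear inhomogeneity on the right side of the lower inequality.

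Next I will rewrite the lower inequality with the integrating factor $e^{-Ct}$:
\[
\bigl(y(t)\, e^{-Ct}\bigr)' = (y' - Cy)e^{-Ct} \ge -y^\alpha e^{-Ct}.
\]
Substituting the bound from the previous step gives
\[
\bigl(y(t)\, e^{-Ct}\bigr)' \ge -y(0)^\alpha\, e^{-(1-\alpha)Ct}.
\]
Integrating from $0$ to $t$ and using that the integrand is bounded above by $1/[C(1-\alpha)]$ in absolute value, we obtain
\[
y(t) e^{-Ct} - y(0) \ge -\frac{y(0)^\alpha}{C(1-\alpha)}\bigl(1 - e^{-(1-\alpha)Ct}\bigr) \ge -\frac{y(0)^\alpha}{C(1-\alpha)}.
\]
Multiplying through by $e^{Ct}$ yields the stated estimate. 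The manipulation is a standard application of the integrating-factor trick; the positivity hypothesis on $y$ ensures throughout that $y^\alpha$ is well defined and that the preliminary Gr\"onwall-type step is valid.
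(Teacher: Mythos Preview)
Your proof is correct and follows essentially the same approach as the paper: both introduce the integrating factor $e^{-Ct}$ (the paper writes $x(t)=y(t)/e^{Ct}$), use the upper bound $y'\le Cy$ to get $y(t)\le y(0)e^{Ct}$ and hence control $y^\alpha$, and then integrate the resulting linear differential inequality for $y(t)e^{-Ct}$. The only difference is the order of presentation.
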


\begin{proof} 
Let $x(t)=y(t)/e^{Ct}$, we compute
\[x'(t)=\frac{y'(t)e^{Ct}-Cy(t)e^{Ct}}{e^{2Ct}}.\]
It follows that
\[-\frac{y^\alpha}{e^{Ct}}\leq x'(t)\leq 0.\]
The second inequality implies $y(t)\leq y(0)e^{Ct}$, together with the first inequality, it implies
\[x'(t)\geq -y(0)^\alpha e^{-(1-\alpha)Ct}.\]
Therefore, for any $t>0$,
\begin{align*}
x(t)-x(0)=\int_{0}^t x'(s)ds&\geq \int_{0}^t -y(0)^\alpha e^{-(1-\alpha)Cs}ds\\
&=-\frac{y(0)^\alpha}{C(1-\alpha)}\cdot (1-e^{C(1-\alpha)t}) \geq -\frac{y(0)^\alpha}{C(1-\alpha)}.
\end{align*}
Thus,
\[y(t)\geq \left(y(0)-\frac{y(0)^\alpha}{C(1-\alpha)}\right)e^{Ct}.\]
\end{proof}

\subsubsection{Proof of Theorem \ref{thm:finiteness criterion}} 

We are now ready to prove Theorem \ref{thm:finiteness criterion}. We prove a slightly more general result (Theorem \ref{thm:ubd} below). Before stating the theorem, we make an observation.

\begin{proposition}
If $X$ has pinched negative curvature and $||\mu_x||$ is uniformly bounded, then $\nu$ is uniformly exponentially bounded.
\end{proposition}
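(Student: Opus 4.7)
The plan is to estimate $\nu(T^1(B(x,R)))$ directly using the Hopf parametrization of the Bowen--Margulis measure together with two elementary geometric facts: a geodesic meets $B(x,R)$ in a chord of length at most $2R$, and $\beta_x(\xi,\eta)$ is bounded by twice the distance from $x$ to the geodesic $[\xi,\eta]$.

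First I would recall the identification $T^1 X \cong (\partial_\infty X \times \partial_\infty X \setminus \Delta) \times \R$, where $(\xi,\eta,t)$ represents the unit tangent vector at $c_{\xi\eta}(t)$ along the bi-infinite geodesic from $\xi$ to $\eta$. Under this identification the measure lifts to
\[
d\nu(\xi,\eta,t) = e^{\delta \beta_x(\xi,\eta)}\, d\mu_x(\xi)\, d\mu_x(\eta)\, dt.
\]
For fixed $(\xi,\eta)$, the set of $t$ such that $c_{\xi\eta}(t) \in B(x,R)$ is a (possibly empty) interval of length at most $2R$, so Fubini yields
\[
\nu\bigl(T^1(B(x,R))\bigr) \leq 2R \int_{\{(\xi,\eta)\,:\, [\xi,\eta]\cap B(x,R) \neq \emptyset\}} e^{\delta \beta_x(\xi,\eta)}\, d\mu_x(\xi)\, d\mu_x(\eta).
\]

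Next I would bound $\beta_x$ uniformly on this effective domain. Let $z$ be the closest point on $[\xi,\eta]$ to $x$, and for $\theta \in \{\xi,\eta\}$ let $c_{z\theta}$ be the unit-speed ray from $z$ to $\theta$. The triangle inequality $d(x,c_{z\theta}(t)) \leq d(x,z) + t$ gives, upon passing to the limit in the definition of the Busemann function, $B_\theta(x,z) \leq d(x,z)$. Summing over $\theta \in \{\xi,\eta\}$ yields $\beta_x(\xi,\eta) \leq 2d(x, [\xi,\eta]) \leq 2R$ throughout the integration domain. Combining this with the uniform bound $\|\mu_x\| \leq C_0$ then gives
\[
\nu\bigl(T^1(B(x,R))\bigr) \leq 2R \cdot C_0^2 \cdot e^{2\delta R},
\]
uniformly in $x \in X$ and $R > 0$.

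Finally I would absorb the linear prefactor into the exponential: for any $C \geq \max(2\delta, \log(2 C_0^2), 1)$ sufficiently large one has $2R\, C_0^2\, e^{2\delta R} \leq C e^{CR}$ for all $R \geq 0$, producing the desired uniform exponential bound in the sense of Definition \ref{def:UEB}. No step presents a real obstacle; the only piece requiring even brief justification is the Busemann bound $\beta_x \leq 2\,d(x,[\xi,\eta])$, which is elementary and uses only the triangle inequality in the Hadamard setting.
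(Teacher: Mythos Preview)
Your proof is correct and is essentially identical to the paper's own argument: both bound the chord length by $2R$, bound $\beta_x(\xi,\eta)\le 2R$ on the effective domain via the triangle inequality for Busemann functions, and then use the uniform bound on $\norm{\mu_x}$ to get $\nu(T^1(B(x,R)))\le 2R\,\norm{\mu_x}^2 e^{2\delta R}\le C e^{CR}$. Your write-up is in fact a bit more careful in justifying the Busemann bound than the paper's terse ``by triangle inequality''.
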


\begin{proof}
We have
\begin{align*}
\nu(T^1(B(x,R)))&=\int_{\pa_\infty X} \int_{\pa_\infty X}\left(\int_{T_1}^{T_2} e^{\delta \beta_{x}(\xi,\eta)}dt\right)d\mu_{x}(\eta)d\mu_{x}(\xi)\\
&\leq \int_{\pa_\infty X} \int_{\pa_\infty X} (T_2-T_1)\cdot e^{2\delta R}d\mu_{x}(\eta)d\mu_{x}(\xi) \leq 2R\cdot e^{2\delta R}||\mu_x||^2 \leq Ce^{CR},
\end{align*}
where $T_1, T_2$ represent the time parameters of the geodesic $\xi\eta$ that enters and leaves $B(x,R)$ respectively, and by triangle inequality $T_2-T_1\leq 2R$. The second line of the inequality also uses the triangle inequality for $\beta_x(\xi,\eta)\leq 2R$ as $\xi\eta$ is at most $R$ distance to $x$. Finally, the last inequality uses that $||\mu_x||$ is uniformly bounded and $C$ is chosen large enough and does not depend on $x$.
\end{proof}

We now show that the converse is also true, with a mild extra condition that $\mu_x$ is non-atomic. The idea is to argue under the condition that $||\mu_x||$ grows exponentially fast at the maximal rate $\delta$, hence the trajectory limiting to $\partial_\infty X$ will form a Dirac mass.

\begin{theorem}\label{thm:ubd}
If $X$ has pinched negative curvature, $\nu$ is uniformly exponentially bounded, and $\mu_x$ is non-atomic, then $\norm{\mu_x}$ is uniformly bounded on $M$.
\end{theorem}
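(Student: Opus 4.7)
The proof proceeds by contradiction. Suppose $\|\mu_x\|$ is not uniformly bounded on $M$. By $\Gamma$-equivariance, lifting to $X$ we may pick a basepoint $x_0 \in X$ with $f(x_0) := \|\mu_{x_0}\|$ arbitrarily large, well beyond the threshold $D(M)$ of \cref{lem:gradient-norm}. Let $\gamma(s)$ be the unit-speed integral curve of $\nabla f / \|\nabla f\|$ starting at $x_0$. Then \cref{lem:gradient-norm} gives $\|\nabla f\|(\gamma(s)) \geq \delta f(\gamma(s)) - f(\gamma(s))^\alpha$; combined with \cref{lem:ODE} applied to $y(s) = f(\gamma(s))$ with $C = \delta$, this yields the exponential lower bound $f(\gamma(s)) \geq \bigl(f(x_0) - f(x_0)^\alpha/(\delta(1-\alpha))\bigr) e^{\delta s}$, which is effectively $f(x_0) e^{\delta s}$ for large $f(x_0)$.

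Exponential growth at the maximal rate $\delta$ forces strong angular concentration of $\mu_{\gamma(s)}$. Writing $\bar v(s) = \nabla f(\gamma(s))/\|\nabla f(\gamma(s))\|$ and using \eqref{eq:gradient} together with the identity $\|\nabla f\|(\gamma(s)) = \delta \int \langle v_{\gamma(s)\theta}, \bar v(s)\rangle\, d\mu_{\gamma(s)}(\theta)$, we get
\[ \int_{\partial_\infty X} (1 - \langle v_{\gamma(s)\theta}, \bar v(s)\rangle)\, d\mu_{\gamma(s)}(\theta) \leq \frac{f(\gamma(s))^\alpha}{\delta}, \]
which is exponentially negligible relative to $f(\gamma(s))$. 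Consequently $\bar v(s)$ stabilizes, so $\gamma(s) \to \xi$ for some $\xi \in \partial_\infty X$. By the conformality $d\mu_{\gamma(s)} = e^{-\delta B_\theta(\gamma(s), x_0)}\, d\mu_{x_0}$ and dominated convergence on closed subsets of $\partial_\infty X \setminus \{\xi\}$, the non-atomicity hypothesis $\mu_{x_0}(\{\xi\}) = 0$ forces the normalized probability measures $\mu_{\gamma(s)}/\|\mu_{\gamma(s)}\|$ to converge weakly to the Dirac measure at $\xi$; in this sense the exponentially growing $\mu_{\gamma(s)}$ produces a limiting Dirac atom at infinity.

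To derive the contradiction, I would expand the uniform exponential bound on $\nu$ via the Bowen-Margulis representation at $\gamma(s)$,
\[ \nu(T^1 B(\gamma(s), R)) = \int\!\!\int T_R(\eta_1, \eta_2; \gamma(s))\, e^{\delta \beta_{\gamma(s)}(\eta_1, \eta_2)}\, d\mu_{\gamma(s)}(\eta_1)\, d\mu_{\gamma(s)}(\eta_2), \]
where $T_R$ is the length of the portion of the geodesic $\eta_1 \eta_2$ lying in $B(\gamma(s), R)$. Using non-atomicity of $\mu_{\gamma(s)}$ (inherited from $\mu_{x_0}$) to split the concentration region into two disjoint subsets each of $\mu_{\gamma(s)}$-mass $\gtrsim \|\mu_{\gamma(s)}\|/2$, the resulting pairs of endpoints produce geodesics passing through $B(\gamma(s), R)$ with $\beta_{\gamma(s)} \geq -2R$ and $T_R \gtrsim R$, yielding a quadratic-in-mass lower bound
\[ \nu(T^1 B(\gamma(s), R)) \gtrsim R\, e^{-2\delta R}\, \|\mu_{\gamma(s)}\|^2. \]
Combining this with $\nu(T^1 B(\gamma(s), R)) \leq C e^{CR}$ and the exponential growth of $\|\mu_{\gamma(s)}\|$, balancing $R$ suitably with $s$ and $C$ produces the sought contradiction once $s$ is sufficiently large.

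The main technical obstacle is to make this final step quantitatively precise: the non-atomic splitting of the concentration region must generate pairs of boundary points whose geodesics actually enter $B(\gamma(s), R)$ at the chosen scale, which requires combining the angular concentration estimate with horoball comparisons at $\xi$ and the shadow lemma machinery used earlier in the section. Calibrating $R$ as a function of $s$ and the constant $C$ from the uniform exponential bound so that the lower and upper bounds become incompatible is where the care lies, especially when $C$ is large compared to $\delta$.
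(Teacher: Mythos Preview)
Your setup through the exponential lower bound $f(\gamma(s))\gtrsim e^{\delta s}$ via \cref{lem:gradient-norm} and \cref{lem:ODE} matches the paper exactly. The divergence, and the gap, is in the contradiction step.

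First, the assertion ``$\bar v(s)$ stabilizes, so $\gamma(s)\to\xi$'' is not justified. Angular concentration of $\mu_{\gamma(s)}$ around $\bar v(s)$ at each time does not by itself force $\bar v(s)$ to converge; the flow could in principle wander. The paper sidesteps this by working along a subsequence $t_i\to\infty$ chosen so that $\Phi'(t_i)$ makes an acute angle with the direction away from $x_0$, which is all that is needed.

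Second, and more seriously, the Bowen--Margulis lower bound you sketch does not work. The concentration you established says that almost all of the $\mu_{\gamma(s)}$--mass lies within a shrinking visual cone around $\bar v(s)$. If $\eta_1,\eta_2$ both lie in that cone, their angular separation at $\gamma(s)$ is small, and in pinched negative curvature the geodesic $[\eta_1,\eta_2]$ is then \emph{far} from $\gamma(s)$ (roughly at distance $\log(1/\angle(\eta_1,\eta_2))$). So the product mass $\mu_{\gamma(s)}\otimes\mu_{\gamma(s)}$ is concentrated precisely on pairs whose geodesics miss $B(\gamma(s),R)$ unless $R$ is taken comparable to $\log f$, at which point the balance against $Ce^{CR}$ becomes delicate and the ``quadratic in mass'' lower bound $\gtrsim R e^{-2\delta R}\norm{\mu_{\gamma(s)}}^2$ is not available as stated. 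Non-atomicity lets you split the cone into two pieces, but gives no uniform angular separation between them, so you cannot guarantee that pairs in $A_1\times A_2$ produce geodesics through the ball at the scale you need.

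The paper avoids this entirely by pulling the mass back to the fixed basepoint $x_0$ via conformality. For the shrinking shadow complements $A(t_i)$ (which carry at least half the mass of $\mu_{\Phi(t_i)}$), one has
\[
\mu_{x_0}(A(t_i))\;\geq\; e^{-\delta\, d(x_0,\Phi(t_i))}\,\mu_{\Phi(t_i)}(A(t_i))\;\geq\;\tfrac12,
\]
using $B_\xi(x_0,\Phi(t_i))\leq d(x_0,\Phi(t_i))$ together with the exponential growth $\norm{\mu_{\Phi(t_i)}}\geq e^{\delta d(x_0,\Phi(t_i))}$ you already obtained. The subsequence/angle argument then shows that the visual diameter of $A(t_i)$ \emph{as seen from $x_0$} tends to zero, so $\bigcap_i A(t_i)$ is a single point carrying $\mu_{x_0}$--mass at least $1/2$. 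This contradicts non-atomicity directly, with no second appeal to the uniform exponential bound.
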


\begin{proof}
Suppose that $\norm{\mu_x}\rightarrow \infty$. Then there exists $x_0\in M$ such that $\norm{\mu_{x_0}}\geq D(M)$ and
\begin{equation}\label{eq:large-initial}
\norm{\mu_{x_0}}-\frac{\norm{\mu_{x_0}}^\alpha}{\delta(1-\alpha)}\geq 1,
\end{equation}
where $D(M)$ and $\alpha$ are constants as in Lemma \ref{lem:gradient-norm}. Consider the gradient flow $\Phi(t)$ of $f(x)=\norm{\mu_x}$ with re-parameterized length parameter (unit speed) $t$, initiated at $x_0$. Under such normalization, we have $f'(\Phi(t))=\langle \nabla f, \Phi'(t)\rangle=\norm{\nabla f}$ since $\Phi'(t)$ is in the same direction as $\nabla f$ but with unit speed. Under the gradient flow, $\norm{\mu_x}$ is non-decreasing, and because of the escaping lemma \cite[Lemma 9.19]{Lee}, the flow is well defined for all $t\geq 0$. According to Lemma \ref{lem:gradient-norm}, we have
\[\frac{d}{dt}f(\Phi(t))=\norm{\nabla f}(\Phi(t))\geq \delta f(\Phi(t))-f(\Phi(t))^\alpha,\]
for any $t\geq 0$. On the other hand, it is clear from equation \eqref{eq:gradient} that $\norm{\nabla f}(\Phi(t))\leq  \delta f(\Phi(t))$. Thus $y(t)=f(\Phi(t))$ satisfies the differential inequality of Lemma \ref{lem:ODE}, and it follows that
\[y(t)=\norm{\mu_{\Phi(t)}}\geq \left(\norm{\mu_{x_0}}-\frac{\norm{\mu_{x_0}}^\alpha}{\delta(1-\alpha)}\right)e^{\delta t}\geq e^{\delta t}\]
in view of inequality \eqref{eq:large-initial}. Since $t$ is the length parameter, we obtain that
\begin{equation}\label{eq:exponential}
\norm{\mu_{\Phi(t)}}\geq e^{\delta d(\Phi(t),x_0)},\quad \text{ for all } t\geq 0.
\end{equation}
Next, we claim $\mu_{x_0}$ must have an atomic mass. To see this, we investigate further the shadow complement $Sh_{\xi}(B(x,R))^c$ for each $x$ on $\Phi(t)$. 
	
We denote $v_t$ the unit vector at $\Phi(t)$ pointing in the opposite direction to the geodesic ray connecting $\Phi(t)$ and $\Phi(0)=x_0$, and $\xi_t\in \pa_\infty X$ be any point achieving the following infimum
\[\inf_{\xi\in \pa_\infty X}\mu_{\Phi(t)}(Sh_{\xi}(B(\Phi(t),R_t)))\]
at $x=\Phi(t)$ as in Lemma \ref{lem:shadow}. Let $A(t)\subset \pa_\infty X$ denote the shadow complement and $u_t$ be the unit vector at $\Phi(t)$ pointing towards $\xi_t$. Since $\Phi(t)$ leaves any compact set, there exists a sequence of points $t_i\rightarrow \infty$ such that the angle between $v_{t_i}$ and $\Phi'(t_i)$ is less than $\pi/2$. On the other hand, the angle between $\Phi'(t)$ and $u_{t}$ tends to zero. This is because Lemma \ref{lem:shadow} implies $\mu_{\Phi(t)}$ has large concentration on $A(t)$ and as $R_t\rightarrow \infty$ equation \eqref{eq:angle} implies the diameter of $A(t)$ tends to zero under the spherical metric at $\Phi(t)$. So $\nabla f(\Phi(t))$ (same direction as $\Phi'(t)$) which represents the average of $v_{x\theta}$ is dominated by the direction towards $A(t)$ (or $u_{t}$). Thus, by possibly passing to a subsequence, we may assume $\Phi'(t_i)$ is close enough to $u_{t_i}$ hence $v_{t_i}$ and the entire $A(t_i)$ has angle $<3\pi/4$ viewed from $\Phi_{t_i}$. Moreover we may assume
\begin{equation}\label{eq:half-concentrate}
\mu_{\Phi(t_i)}(A(t_i))\geq \norm{\mu_{\Phi(t_i)}}/2.
\end{equation}
Now the pinched negative curvature implies that when viewed from $x_0$, the diameter of the set $A(t_i)$ also tends to zero. On the other hand, we can estimate
\begin{align*}
\mu_{x_0}(A(t_i))&=\int_{A(t_i)}e^{-\delta B_{\xi}(x_0,\Phi(t_i))}d\mu_{\Phi(t)}(\xi) \geq e^{-\delta d(x_0,\Phi(t_i))}\cdot \mu_{\Phi(t_i)}(A(t_i)) \geq 1/2,
\end{align*}
where the last inequality uses inequalities \eqref{eq:exponential} and \eqref{eq:half-concentrate}. This means $\bigcap_{i}A(t_i)$ has to be a point where $\mu_{x_0}$ has an atomic mass at least $1/2$. This gives a contradiction.
\end{proof}

\begin{proof}[Proof of Theorem \ref{thm:finiteness criterion}]
Note that $\nu(T^1M)<\infty$ implies that $\Ga$ is of divergence type and $\mu$ is non-atomic \cite[Prop. 3.5.4]{Yue96}. The rest follows from Lemma \ref{lem:counting} and Theorem \ref{thm:ubd}.
\end{proof}

\appendix

\section{Proof of Theorem \ref{thm:small_vanishing}}\label{sec:proof_of_small}\label{app:GromovTheorem}

To begin with, we observe that we may construct a simplicial model homotopy equivalent to $M=X/\Ga$ by taking the nerve of a cover by contractible open sets with all intersections of these sets contractible. However, with more care we can choose such a simplicial model that is uniformly Lipschitz homotopic to $M$. To realize this, first scale the metric on $M$ so that the sectional curvatures satisfy $-1\leq K\leq 0$. Consider the simplicial nerve $\mc{N}$ of a cover $\mathcal{U}$ guaranteed by the following proposition. The necessary modifications to the proof of this proposition for the case of general non-positively curved manifolds is routine from the existence of normal neighborhoods and the small radius doubling properties of the Riemannian volume measure when curvature is bounded below. We recall that we defined $M_{\eps}=\set{x\in M\,:\, \op{injrad}(x)<\eps}$. Set $M_{\geq \eps}=M\setminus M_{\eps}$ to be the complement of $M_{\eps}$. We alert the reader that in \cite{Kapovich} the author used $M_{\eps}$ to denote our $M_{\frac{\eps}{2}}$.

\begin{proposition}[{\cite[6.4]{Kapovich}}]\label{prop:good-cover} 
There exists a function 
\[ m(n, \epsilon)\colon  \mathbb{N} \times(0, \infty) \longrightarrow \mathbb{N}, \] with the following property. For every complete $n$--manifold $M$ with $-1\leq K_M\leq 0$, there exits a countable subset $E=\left\{x_{i}, i \in I\right\} \subset M$ and a collection of positive numbers $\left\{\rho_{i}, i \in I\right\}$, so that:
\begin{enumerate}
\item Set $\mathcal{D}:=\left\{D_{i}=B_{\rho_{i} / 2}\left(x_{i}\right): x_{i} \in E\right\}$ and $\mathcal{U}:=\left\{B_{i}=B_{\rho_{i}}\left(x_{i}\right): x_{i} \in E\right\}$.
Then $\mathcal{D}$ (and therefore $\mathcal{U}$) covers $M$.
\item For every $x_{i} \in M_{\geq\eps}$, we have
$\rho_{i}=\frac{\epsilon}{16}$.
\item For every $x_{i} \in M_{\eps}$, we have $\rho_{i}=\frac{\op{injrad}\left(x_{i}\right)}{16} \leq \frac{\epsilon}{16}$.
\item The multiplicity of the covering $\mathcal{U}$ is at most $m(n, \epsilon)$.
\end{enumerate}
\end{proposition}

The proof of the following lemma holds verbatim in our context.
\begin{lemma}[{\cite[Lemma 6.6]{Kapovich}}]\label{lem:partition} There exists a smooth partition of unity $\left\{\eta_{i}, i \in \mathbb{N}\right\}$ subordinate to the covering $\mathcal{U}$, so that every function $\eta_{i}$ is $l_{i}$--Lipschitz, with
\[ L_{\eps}:=\sup \left\{l_{i}: x_{i} \in M_{\geq\eps}\right\}<\infty \]
for every $\eps>0$.
\end{lemma}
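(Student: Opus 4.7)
The plan is to construct the partition of unity by normalizing a standard bump on each element of $\mathcal{U}$, and then exploit the bounded multiplicity together with the $1$--Lipschitz property of the injectivity radius to control the Lipschitz constants on the thick part. Concretely, fix once and for all a $C^\infty$ cutoff $\chi\colon\R\to[0,1]$ with $\chi\equiv 1$ on $[0,1/2]$, $\chi\equiv 0$ on $[1,\infty)$, and $|\chi'|\leq C_0$ for a universal constant $C_0$. For each $i\in I$ set $\tilde\psi_i(x):=\chi(d(x,x_i)/\rho_i)$. Since $\rho_i\leq \op{injrad}(x_i)$ by Proposition \ref{prop:good-cover}, $d(\cdot,x_i)$ is smooth on $B_i\setminus\{x_i\}$ and $\tilde\psi_i$ is smooth on all of $M$ (being locally constant near $x_i$). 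Moreover $\tilde\psi_i\equiv 1$ on $D_i$, $\supp\tilde\psi_i\subset B_i$, and $|\nabla\tilde\psi_i|\leq C_0/\rho_i$. Put $\Sigma:=\sum_j\tilde\psi_j$, which is locally finite by the multiplicity bound (property (4)) and satisfies $\Sigma\geq 1$ pointwise because $\mathcal{D}$ already covers $M$. Then $\eta_i:=\tilde\psi_i/\Sigma$ is a smooth partition of unity subordinate to $\mathcal{U}$.

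For the Lipschitz estimate, the quotient rule together with $\Sigma\geq 1$ and $\tilde\psi_i\leq 1$ yield
\[ |\nabla\eta_i(x)|\leq |\nabla\tilde\psi_i(x)|+|\nabla\Sigma(x)|,\qquad |\nabla\Sigma(x)|\leq m(n,\eps)\cdot\max_{j:\,x\in B_j}\frac{C_0}{\rho_j}, \]
where the second inequality uses the multiplicity bound on the number of $j$ with $x\in B_j$. This already shows that each $\eta_i$ is $l_i$--Lipschitz with $l_i\leq (1+m(n,\eps))C_0/\rho^{(i)}$, where $\rho^{(i)}$ is the minimum of $\rho_j$ over indices $j$ for which $B_j$ meets $\supp\tilde\psi_i$.

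The uniform bound on the thick part reduces to a purely metric observation. Suppose $x_i\in M_{\geq\eps}$, so $\rho_i=\eps/16$ by property (2), and let $x\in B_i$. For any $j$ with $x\in B_j$ we have $d(x_i,x_j)\leq\rho_i+\rho_j$. If $x_j$ were in $M_{\eps}$, property (3) would give $\rho_j\leq\eps/16$, hence $d(x_i,x_j)\leq\eps/8$; but the $1$--Lipschitz continuity of the injectivity radius then forces $\op{injrad}(x_j)\geq 7\eps/8$, so $\rho_j=\op{injrad}(x_j)/8\geq 7\eps/64>\eps/16$, a contradiction. Therefore every $j$ contributing at $x$ satisfies $x_j\in M_{\geq\eps}$ with $\rho_j=\eps/16$. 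Substituting into the preceding estimate gives $|\nabla\eta_i(x)|\leq 16\,C_0(1+m(n,\eps))/\eps=:L_\eps$, as required.

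The main obstacle is precisely the last step: ruling out arbitrarily small nearby balls $B_j$ which would blow up $|\nabla\Sigma|$. The key point is that the $1$--Lipschitz property of the injectivity radius, together with the specific normalization $\rho_i=\eps/16$ on $M_{\geq\eps}$ built into Proposition \ref{prop:good-cover}(2)--(3), prohibits any abrupt transition from the thick part to a much thinner region near it. It is exactly this quantitative rigidity that makes $L_\eps$ finite and dependent only on $n$ and $\eps$.
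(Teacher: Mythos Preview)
Your argument is correct and complete. The paper does not give its own proof here; it simply cites \cite[Lemma 6.6]{Kapovich} and states that ``the proof of the following lemma holds verbatim in our context.'' Your construction---bump functions $\chi(d(\cdot,x_i)/\rho_i)$ normalized by their sum, with the Lipschitz bound coming from the multiplicity control together with the $1$--Lipschitz property of the injectivity radius---is exactly the standard one and is what Kapovich does. The key step you isolate (that no center $x_j$ in the thin part can have $B_j$ meet $B_i$ when $x_i\in M_{\geq\eps}$, because properties (2)--(3) of Proposition~\ref{prop:good-cover} combined with $\abs{\op{injrad}(x_i)-\op{injrad}(x_j)}\leq d(x_i,x_j)$ force a contradiction in the radii) is precisely the mechanism that makes $L_\eps$ finite.

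One small remark: as written, you identify the $\eps$ in the lemma with the parameter used to construct the cover in Proposition~\ref{prop:good-cover}. This is the intended reading in the paper's application. If one wants the statement for a general $\eps'>0$ different from the cover parameter, the same argument adapts: for $x_i\in M_{\geq\eps'}$ one has $\rho_i\geq \min(\eps/16,\eps'/8)$, and the $1$--Lipschitz estimate on the injectivity radius again bounds $\rho_j$ below for any overlapping $B_j$, yielding a finite $L_{\eps'}$ depending on both $\eps'$ and the cover parameter.
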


Observe that identifying $B_i$ with the standard basis vector $e_i\in\bigoplus_{i\in \N}\R$, induces a natural identification of $\mc{N}$ as a subset of the pre-Hilbert simplex,
\[	\begin{gathered} \Delta^{\infty}:=\left\{z \in\bigoplus_{i \in \mathbb{N}} \mathbb{R}: \sum_{i=1}^{\infty} z_{i}=1, z_{i} \geq 0, i \in \mathbb{N}\right\}. \end{gathered} \]
Let $\set{\eta_i}$ be a smooth partition of unity subordinate to the cover $\mc{U}$ given by Lemma \ref{lem:partition}. Consider the map $\eta\colon M\to \mc{N}\subset \Delta^\infty$ defined by 
\[ \eta(x)=(\eta_1(x),\eta_2(x),\dots). \]
As the partition of unity is uniformly $L_\eps$--Lipschitz on $M_{\geq\eps}$ we obtain that $\eta$ is piecewise smooth on simplices and for every $\eps>0$, the restriction $\eta\rest{M_{\geq\eps}}$ is $\sqrt{m(n,\eps)}L_\eps$--Lipschitz.

Note by property (3) of Proposition \ref{prop:good-cover}, and the triangle inequality, the union of the balls in $\mc{U}$ that intersects a given ball in $B_{\rho_i}(x_i)\in\mc{U}$ is contained in a (convex) normal neighborhood of $x_i$. In particular, each $B_{\rho_i}(x_i)$ is convex and intersections of any such balls are therefore convex and contractible so $\eta$ is a homotopy equivalence by the standard Nerve Theorem of Leray. However, we need a controlled homotopy inverse to $\eta$. As pointed out in \cite[Rem 6.8]{Kapovich}, this is the one point in his argument where Kapovich uses the assumption that he is working on a constant curvature manifold. Hence, we will need to modify his construction by using the barycenter map instead. 

For a complete $\CAT(0)$--space $B$, and a compactly supported measure $\mu$ with $\supp(\mu)\subset B$, we define $\bary(\mu)\in B$ of $\mu$ to be the unique $y \in B$ minimizing the (strictly) convex function 
\[ \mc{B}_\mu(y)=\int_B d(x,y)^2 d\mu(x). \]  
The existence of $\bary(\mu)\in B$ follows from the fact that distance functions are unbounded on unbounded sets but $\mathrm{supp}(\mu)$ is bounded. (See \cite[Prop 4.3]{Sturm03} for a more general statement on existence and uniqueness.) We claim that $\bary(\mu)$ lies in $\textrm{Hull}(\supp(\mu))\subset B$, the convex hull of the support of $\mu$. To see this, note that for any point $z\in B\setminus \mathrm{Hull}(\supp(\mu))$ and each point $x\in\supp(\mu)$, we have $d(z,x)>d(\bar{z},x)$ where $\bar{z}$ is the unique closest point in $\mathrm{Hull}(\supp(\mu))$ to $z$. This follows by the law of cosines for the $\CAT(0)$--metrics as the triangle $\Delta(y,\bar{y},x)$ has an obtuse Alexandrov angle at $\bar{z}$. 

Let $\Delta_{J}=\left[e_{j_{0}}, \ldots, e_{j_{k}}\right]$ be a $k$--simplex in $\mc{N}, J=\left\{j_{0}, \ldots, j_{k}\right\} \subset \N$. Recall that for $j\in J$ the vertices $e_{j}$ correspond to the balls $B_{j}=B_{\rho_{j}}\left(x_{j}\right) \in \mathcal{U}$. We define the map $\bar{\eta}\colon \Delta_{J} \longrightarrow M$ first on vertices by sending the vertex $e_{j}$ to the corresponding center $x_{j} \in B_{j}$. Again by property (3) of Proposition \ref{prop:good-cover}, the union of the balls $\bigcup_{j \in J} B_{j}$ is contained in a closed convex neighborhood $B\subset M$. Indeed, we could take $B$ to be the ball of radius $\frac{\op{injrad(x_j)}}{4}$ about any of the $x_j$ for $j\in J$. Though $M$ is only locally $\CAT(0)$, $B$ is a complete $\CAT(0)$ space and we will apply the above barycenter construction to atomic measures supported there. 

For each point $x\in \Delta_{J}\subset \mc{N}$ let $(a_0(x),\dots,a_{k}(x))$ be its Euclidean barycentric coordinates. We define the map $\bar\eta:\Delta_J\rightarrow M$ as
\[ \bar{\eta}(x):=\bary\left( \sum_{i=0}^k a_i(x)\delta_{x_i} \right) \] 
where $\delta_{x_i}$ is the Dirac measure at the point $x_i\in M$ corresponding to the identified vertex of the nerve $\mc{N}$. 
In this case the map $\bar \eta(x)$ is the unique minimum of the function 
\[ y\mapsto \sum_{i=0}^k a_i(x)d(x_i,y)^2. \]

We observe that $\bar{\eta}$ is globally well defined and continuous on all of $\mc{N}$ since a sub-face corresponds to a subset of the $\set{a_i}$ vanishing and the barycentric coordinate parameterizations are compatible with combinatorics of the face matchings. Denote $V_J$ the convex hull of the vertex set $\set{x_j}_{j\in J}$. It follows from the above discussion of the barycenter that $\bar\eta(x)$ lies in $V_J$. We claim 
\[ V_J\subset \bigcap_{j\in J}B_r(x_j),\]
where $r=2\max\set{\rho_{j_0},\dots,\rho_{j_k}}$. Without loss of generality, we assume $\rho_{j_0}\leq \cdots \leq \rho_{j_k}$. First, we notice that for any $i,j\in J$,
\begin{equation}\label{eq:distance}
d(x_i,x_j)\leq \rho_i+\rho_j\leq 2\rho_{j_k}=r. 
\end{equation}
It follows by the triangle inequality that $B_{r}(x_i)\subset B_{2r}(x_k)$. Note that $2r=4\rho_k\leq \frac{1}4\op{injrad}(x_k)$, so $B_{2r}(x_k)$ is convex. For any other point $x\in M$ we have as a consequence of the triangle inequality that $\op{injrad}(x_k)\leq d(x_k,x)+\op{injrad}(x)$. Hence, we have $\op{injrad}(x_i)\geq 8r-r=7r$ for each $i\in J$, even when $x_k\in M_{\geq \eps}$ and $x_i\in M_{\eps}$. Therefore each $B_{r}(x_i)$ ($i\in J$) is convex and so is the intersection $\bigcap_{j\in J}B_r(x_j)$. On the other hand, we see from \eqref{eq:distance} that each $B_r(x_j)$ contains the vertex set $\set{x_j}_{j\in J}$. Thus by convexity, each $B_r(x_j)$ contains $V_J$ and so does the intersection. This proves the claim.

Since we have shown that the barycenter map $\bar{\eta}$ maps $\Delta_J$ into the convex hull of the corresponding vertices which further lies inside $\bigcap_{j\in J}B_r(x_j)$ for the above given $r$, we can prove a similar lemma to \cite[Lemma 6.9]{Kapovich}.

\begin{lemma}
If $x \in M$ and $z \in \operatorname{Star}(\eta(x))$, then $d(x, \bar{\eta}(z)) \leq \frac{\eps}{4}$. Moreover, there exists a homotopy $H$ between $\bar{\eta} \circ \eta$ and $I d$, whose tracks have length at most $\frac{\eps}{4}$.
\end{lemma}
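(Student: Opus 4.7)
The plan is to first unwind the combinatorial meaning of $z \in \operatorname{Star}(\eta(x))$ in order to locate $\bar\eta(z)$ near $x$ via $\CAT(0)$ convexity in a small normal neighborhood. Let $J = \set{j : \eta_j(x) > 0}$, so that $\Delta_J$ is the unique open simplex of $\mathcal N$ containing $\eta(x)$; then $z$ lies in a simplex $\Delta_{J'}$ with $J \subset J'$. By definition of the nerve, $\bigcap_{j \in J'} B_j \neq \emptyset$, so for any $j \in J'$ and $j_0 \in J$ the balls $B_j$ and $B_{j_0}$ intersect, yielding $d(x_j, x_{j_0}) \leq \rho_j + \rho_{j_0} \leq \eps/8$. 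Combined with $d(x, x_{j_0}) < \rho_{j_0} \leq \eps/16$, which holds because $x \in B_{j_0}$, the triangle inequality gives $d(x, x_j) \leq 3\eps/16$ for every $j \in J'$.

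Next I would invoke the $\CAT(0)$ geometry. By Proposition \ref{prop:good-cover}, all the vertices $\set{x_j : j \in J'}$ together with $x$ itself sit inside a single convex normal neighborhood. There the squared distance from $x$ is strictly convex, so the closed ball $\overline B_{3\eps/16}(x)$ is convex and contains the convex hull of $\set{x_j : j \in J'}$. As already recorded in the excerpt, $\bar\eta(z)$ lies in this convex hull, so $d(x, \bar\eta(z)) \leq 3\eps/16 \leq \eps/4$.

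For the homotopy, take the obvious choice: define $H \colon M \times [0,1] \to M$ by letting $H(x,t)$ be the point at parameter $t$ on the unique constant-speed geodesic from $x$ to $\bar\eta(\eta(x))$. Continuity of $H$ follows from continuity of $\eta$, continuity of the barycenter in its defining weights $\set{a_i(z)}$, and the continuous dependence of geodesics on endpoints in a Hadamard manifold. The length of the track through $x$ is precisely $d(x, \bar\eta(\eta(x)))$, which is at most $\eps/4$ by the first part applied to $z = \eta(x)$.

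The main subtlety is the convexity step: one needs to check that the entire convex hull of $\set{x_j : j \in J'}$ together with $x$ really lies in a common convex normal neighborhood, not merely that each individual vertex does. This is exactly what the refined radii of Proposition \ref{prop:good-cover} furnish, since any union of pairwise-intersecting balls of $\mathcal U$ is contained in a convex normal neighborhood of the center of the largest ball; once that ingredient is in place the remaining bookkeeping is routine.
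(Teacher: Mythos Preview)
Your proof is correct and follows essentially the same route as the paper's. The only cosmetic difference is in how the convexity step is packaged: the paper uses the containment $V_{J'}\subset\bigcap_{j\in J'}B_r(x_j)$ (with $r=2\max_j\rho_j$) established just before the lemma, and then applies the triangle inequality through a center $x_i$ with $i\in J$ to get $d(x,\bar\eta(z))\leq \rho_i+r\leq \eps/4$; you instead bound $d(x,x_j)\leq 3\eps/16$ for every $j\in J'$ directly and then invoke convexity of the ball $\overline B_{3\eps/16}(x)$ inside the common convex normal neighborhood. Both arguments rest on the same ingredients (Proposition~\ref{prop:good-cover} and $\CAT(0)$ convexity in a normal neighborhood), and the geodesic homotopy for the second assertion is identical.
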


\begin{proof}
The proof essentially follows from \cite[Lemma 6.9]{Kapovich}. Denote $\Delta_J\subset \mathcal N$ the smallest simplex containing $\eta(x)$. Since $z \in \operatorname{Star}(\eta(x))$, there exists a maximal index set $J'\supset J$ spanning a maximal simplex in the nerve complex such that both $z, \eta(x)$ are in $\Delta_{J'}$. Let $r'=\max\set{\rho_j}_{j\in J'}$ and let $x_i$ ($i\in J$) be a vertex so that $x\in B_i$. Then we have
\[d(x,\bar\eta(z))\leq d(x,x_i)+d(x_i,\bar\eta(z))\leq \rho_i+r'\]
since $\bar{\eta}(z)\in V_{J'}\subset \bigcap_{j\in J'}B_r(x_j)$. As both $\rho_i$ and $r'$ are at most $\frac{\eps}8$, we have $d(x, \bar{\eta}(z)) \leq \frac{\eps}{4}$, thus proving the first assertion. For the second assertion, we take the geodesic homotopy between $\bar\eta \circ \eta$ and $Id$. For any $x\in M$, denote $\Delta_J\subset \mathcal N$ the smallest simplex containing $\eta(x)$. Then both $\bar\eta (\eta(x))$ and $x$ are in the convex set $\bigcap_{j\in J}B_r(x_j)$. Thus, the geodesic homotopy is well defined. The fact that the tracks of the homotopy has length at most $\frac{\eps}4$ follows from the first assertion.
\end{proof}

The final proposition we will need has a proof which follows verbatim that of \cite[Prop 6.10]{Kapovich}, using the above (modified) results in the corresponding places of the proof. (Observe that in the Kapovich reference the result is stated in terms of a constant $\kappa$, but that can be taken to be $\kappa=\frac{\eps}{4}$ giving the formulation below.) Set $\mc{N}_\eps$ to be the star of $\eta(M_{\eps})\subset\mc{N}$ and as $\eta:M\to\mc{N}$ induces an isomorphism $\pi_1(M)\to\pi_1(\mc{N})$ we have a flat bundle $\bb{W}$ on $\mc{N}$ corresponding to the $R\Gamma$--module $V$.

\begin{proposition}\label{prop:epsto2eps}

\[ \mathrm{ker}(\eta_{\epsilon, *}\colon H_*\left(M, M_{\epsilon}; \mathbb{V}\right) \rightarrow H_*\left(\mathcal N, \mathcal N_\epsilon ; \mathbb{W}\right)) <
\mathrm{ker}(H_*\left(M, M_{\epsilon}; \mathbb{V}\right) \rightarrow H_*\left(M, M_{\frac32 \epsilon}; \mathbb{V}\right)). \]
\end{proposition}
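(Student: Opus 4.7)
The plan is to exploit the near-identity behavior of $\bar{\eta}\circ \eta$ to factor the inclusion-induced map $H_k(M,M_\eps;\V)\to H_k(M,M_{3\eps/2};\V)$ through $\eta_{\eps,*}$. Specifically, I will establish a diagram of pairs
\[
\begin{tikzcd}
(M,M_\eps)\ar[r,"\eta"]\ar[dr, "i"'] & (\mc{N},\mc{N}_\eps)\ar[d,"\bar{\eta}"]\\
& (M,M_{3\eps/2})
\end{tikzcd}
\]
that commutes up to a homotopy of pairs, where $i$ denotes the inclusion. Once this is in place, any class $\alpha\in\ker(\eta_{\eps,*})$ satisfies $i_*(\alpha)=\bar{\eta}_*\eta_*(\alpha)=0$, which is exactly the stated inclusion of kernels.

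The first main step is to verify that $\bar{\eta}(\mc{N}_\eps)\subset M_{3\eps/2}$. Fix $p\in \mc{N}_\eps=\mathrm{Star}(\eta(M_\eps))$, and choose a maximal simplex $\Delta_J$ containing both $p$ and $\eta(y)$ for some $y\in M_\eps$. Selecting $j_0\in J$ with $y\in B_{j_0}$, the estimate $d(y,x_{j_0})<\rho_{j_0}\leq \eps/16$ combined with the $1$--Lipschitz property of the injectivity radius gives $\op{injrad}(x_{j_0})<\eps+\eps/16$. By the construction of $\bar{\eta}$ recorded above, the point $\bar{\eta}(p)$ lies in $V_J\subset \bigcap_{j\in J} B_r(x_j)$ with $r\leq \eps/8$, so $\op{injrad}(\bar{\eta}(p))<\eps+\eps/16+\eps/8<3\eps/2$ by another application of the Lipschitz property. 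Hence $\bar{\eta}$ restricts to a map of pairs $(\mc{N},\mc{N}_\eps)\to (M,M_{3\eps/2})$.

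For the second step, the lemma above provides a homotopy $H\colon M\times I\to M$ between $\bar{\eta}\circ \eta$ and $\op{Id}_M$ whose tracks have length at most $\eps/4$. If $x\in M_\eps$, then each point $H(x,t)$ lies within $\eps/4$ of $x$, so $\op{injrad}(H(x,t))<5\eps/4<3\eps/2$, and $H$ is therefore a homotopy of pairs $(M,M_\eps)\times I\to (M,M_{3\eps/2})$ between $\bar{\eta}\circ \eta$ and $i$. Coefficient compatibility is automatic: since $\eta$ induces the identification of $\pi_1(M)$ with $\pi_1(\mc{N})=\Ga$ used to define $\bb{W}$ from $V$, we have $\eta^*\bb{W}\cong \V$, and $H$ identifies $(\bar{\eta}\circ \eta)^*\V$ with $\V$. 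Thus the diagram commutes in twisted homology, yielding the claimed containment of kernels. The only real subtlety is in Step 1: the careful bookkeeping of the radii $\rho_j$ and $r$ must confirm that the barycentric filling, which replaces Kapovich's geodesic filling in the non-constant-curvature setting, still lands within the thin part of the prescribed thickness $3\eps/2$ — this is exactly where the choice $\rho_j\leq\eps/16$ in Proposition~\ref{prop:good-cover} is used.
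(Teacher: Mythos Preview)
Your proposal is correct and follows essentially the same approach as the paper, which simply cites Kapovich's Proposition 6.10 and notes that the proof carries over verbatim once the barycentric homotopy inverse $\bar\eta$ and the preceding lemma are substituted for their constant-curvature analogues. Your factorization of $i_*$ through $\eta_{\eps,*}$ via the homotopy of pairs is exactly that argument; note that Step~1 can be shortened by invoking the preceding lemma directly: for $p\in\mc{N}_\eps=\mathrm{Star}(\eta(M_\eps))$ there is $y\in M_\eps$ with $p\in\mathrm{Star}(\eta(y))$, so $d(y,\bar\eta(p))\leq\eps/4$ and hence $\op{injrad}(\bar\eta(p))<\eps+\eps/4<3\eps/2$.
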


The rest of the proof of \cref{thm:small_vanishing} now follows the identical argument given in the proofs of \cite[Thm 7.1]{Kapovich} and its intermediary steps: Proposition 7.2, Lemma 7.3, and Lemma 7.4 from \S 7 of \cite{Kapovich} with one important caveat that we now explain. 

In the second to last line of the proof of Theorem 7.1 of \cite{Kapovich}, the inclusion of pairs does not induce an isomorphism $H_q(M,M_{\eps};\V)\to H_q(M,M_{\epsilon_0};\V)$ in our case. Indeed, this is not an isomorphism even in the constant curvature $-1$ case because of the possible presence of small Margulis tubes as Example \ref{ex:lochness} shows. However, in the case of constant curvature it will be an isomorphism for $q>2$. This follows from the long exact sequence for pairs, the fact that Margulis tubes have cohomological dimension at most 1, and that the inclusion of cusps components in $M_{\eps}$ into the corresponding cusps components in $M_{\epsilon_0}$ are strong deformation retracts. For this reason we do not achieve the sharper vanishing into $H_q(M,M_{\eps};\V)$. However, if the inclusion happens to induce an isomorphism $H_q(M,M_{\eps};\V)\to H_q(M,M_{\frac32\eps};\V)$, we do obtain this sharper vanishing. 

We finally note that the vanishing of a class in $H_q(M,M_{\frac32\eps};\V)$ implies vanishing in $H_q(M,M_{2\eps};\V)$ under the map induced by inclusion. For convenience we use the scale $2\eps$ in the statement and proof of \cref{thm:small_vanishing}.


\bibliographystyle{alpha}

\footnotesize{
CC: Indiana University, Bloomington, IN, USA. E-mail \verb|connell@indiana.edu|

DBM: Purdue University, West Lafayette, IN, USA. E-mail: \verb|dmcreyno@purdue.edu|

SW: ShanghaiTech University, Pudong, Shanghai, China. E-mail: \verb|wangshi@shanghaitech.edu.cn|}

\end{document}